\let\g=\gamma
\let\la=\lambda
\let\s=\sigma
\let\f=\frac
\let\p=\psi
\let\D=\Delta
\let\pa=\partial
\newcommand{\beq}{\begin{equation}}
	\newcommand{\eeq}{\end{equation}}
\newcommand{\ben}{\begin{eqnarray}}
	\newcommand{\een}{\end{eqnarray}}
\newcommand{\beno}{\begin{eqnarray*}}
	\newcommand{\eeno}{\end{eqnarray*}}
\newtheorem{defi}{Definition}[section]
\newtheorem{thm}{Theorem}[section]
\newtheorem{lem}{Lemma}[section]
\newtheorem{rmk}{Remark}[section]
\newtheorem{cor}{Corollary}[section]
\newtheorem{prop}{Proposition}[section]
\numberwithin{equation}{section}
\newcommand{\andf}{~\text{ and }~}
\newcommand{\with}{~\text{ with }~}
\def\eqdef{\buildrel\hbox{\footnotesize def}\over =}
\def\dive{{\mathop{\rm div}\nolimits}\,}
\def\Supp{\mathop{\rm Supp}\nolimits\ }
\def\DD{\dot{\Delta}}
\let\f=\frac
\let\p=\partial
\def\N{\mathop{\mathbb  N\kern 0pt}\nolimits}
\def\Q{\mathop{\mathbb  Q\kern 0pt}\nolimits}
\def\R{{\mathop{\mathbb R\kern 0pt}\nolimits}}
\def\Z{\mathop{\mathbb  Z\kern 0pt}\nolimits}
\def\PP{\mathop{\mathbb P\kern 0pt}\nolimits}
\def\dis{\displaystyle}
\def\tp{\tilde{p}}
\def\tq{\tilde{q}}
\def\bq{\bar{q}}
\def\dx{\mathrm{d}x}
\def\dy{\mathrm{d}y}
\def\dt{\mathrm{d}t}
\def\ds{\mathrm{d}s}
\newcommand{\BP}{\dot{B}^{-1+\f3p}_{p,\infty}}
\newcommand{\BPS}{\dot{B}^{-1+\f{3}{\tilde{p}}+\sigma}_{\tilde{p},\infty}}
\newcommand{\LQQ}{L^{q,\infty}}
\newcommand{\BB}{\dot{B}^{\f12}_{2,\infty}}
\begin{document}
%\title[Cannone-Meyer-Planchon solutions of 3-D Inhomogeneous Navier-Stokes system]
\title[The inhomogeneous Navier-Stokes system]
{Global well-posedness and self-similar solution of the inhomogeneous Navier-Stokes system}

\author[T. Hao]{Tiantian Hao}
\address[T. Hao]{School of Mathematical Sciences, Peking University, Beijing 100871, China}
\email{haotiantian@pku.edu.cn}

\author[F. Shao]{Feng Shao}
\address[F. Shao]{School of Mathematical Sciences, Peking University, Beijing 100871,  China}
\email{fshao@stu.pku.edu.cn}

\author[D. Wei]{Dongyi Wei}
\address[D. Wei]{School of Mathematical Sciences, Peking University, Beijing 100871, China}
\email{jnwdyi@pku.edu.cn}

\author[P. Zhang]{Ping Zhang}
\address[P. Zhang]{Academy of Mathematics \& Systems Science and Hua Loo-Keng Center for Mathematical Sciences, Chinese Academy of Sciences, Beijing 100190, China, and School of Mathematical Sciences, University of Chinese Academy of Sciences, Beijing 100049, China}
\email{zp@amss.ac.cn}

\author[Z. Zhang]{Zhifei Zhang}
\address[Z. Zhang]{School of Mathematical Sciences, Peking University, Beijing 100871, China}
\email{zfzhang@math.pku.edu.cn}

\date{\today}

\begin{abstract}
    In this paper, we study the global well-posedness of the 3-D inhomogeneous  incompressible Navier-Stokes system (INS in short) with initial density $\rho_0$ being discontinuous and initial velocity $u_0$ belonging to some critical space.  Firstly, if $\rho_0u_0$ is sufficiently small in the space $\BP(\R^3)$ and $\rho_0$ is close enough to a positive constant in $L^\infty$, we  establish the global existence of strong solution to (INS) for $3<p<\infty$ and provide the uniqueness of the solution for $3<p<6$. This result corresponds to Cannone-Meyer-Planchon solution of the classical Navier-Stokes system. Furthermore, with the additional assumption that $u_0\in L^2(\R^3)$, we prove the weak-strong uniqueness between Cannone-Meyer-Planchon solution and Lions weak solution of (INS).  Finally, we prove the global well-posedness of (INS) with $u_0\in \BB(\R^3)$ being small and only an upper bound on the density.
 This gives the first existence result of the forward self-similar solution for (INS).
\end{abstract}
\maketitle

\section{Introduction}
In this paper, we consider the following inhomogeneous incompressible Navier-Stokes system in $ \mathbb{R}^{+}\times\R^3$
\begin{equation}\label{INS}
	\left\{
	\begin{array}{l}
		\partial_t\rho+u\cdot\nabla \rho=0,\\
		\rho(\partial_tu+u\cdot\nabla u)-\Delta u+\nabla P=0,\\
		\dive u = 0,\\
		(\rho, \PP(\rho u))|_{t=0} =(\rho_{0}, m_0),
	\end{array}
	\right.
\end{equation}
where $\rho,~u$ stand for the density and velocity of the fluid respectively, and $P$ is a scalar pressure function, $\PP$ is the Leray projector into divergence-free vector fields.  This system is known as a model for the evolution of a multi-phase flow consisting of several immiscible, incompressible fluids with different densities. We refer to \cite{Lions_vol1} for a detailed derivation of \eqref{INS}.

 When $\rho_0\equiv 1$, the system \eqref{INS} is reduced to the classical Navier-Stokes system (NS in short). Similar to (NS), the inhomogeneous Navier-Stokes system \eqref{INS} also has a scaling invariance.
More precisely, if $(\rho,u,P)$ is a solution of \eqref{INS} on $[0, T]\times\R^d$, then for all $\la>0$, the triplet $(\rho_\la, u_\la, P_\la)$ defined by
    \begin{equation}\label{Eq.scaling}
        (\rho_\la, u_\la, P_\la)(t,x)\eqdef\left(\rho(\la^2t,\la x), \la u(\la^2t,\la x), \la^2P(\la^2t,\la x)\right)
    \end{equation}
    solves \eqref{INS} on $[0, T/\la^2]\times\R^d$. In this paper, we are concerned with the well-posedness of \eqref{INS} in the so-called {\it critical functional framework}, which is to say, in functional spaces with scaling invariant norms.

%Lady\v{z}enskaja and Solonnikov \cite{LS} first addressed the question of unique solvability of \eqref{INS} in a bounded domain $\Omega$. Under the assumption that $u_0\in W^{2-\frac{2}{p},p}(\Omega)(p>d)$ is divergence free and vanishes on $\pa\Omega$ and  $\rho_0\in C^1(\Omega)$ is bounded away from zero, they proved the global well-posedness in dimension $d=2$, and local well-posedness in dimension $d=3$.

Let us first recall  some solutions of  3-D (NS) in critical spaces: Fujita-Kato solution in $\dot H^{1/2}$ \cite{Fujita-Kato}, Kato solution in $L^3$ \cite{Kato}, Cannone-Meyer-Planchon solution in $\dot B^{-1+3/p}_{p,\infty}$ for $p\in(3,\infty)$ \cite{Cannone-Meyer-Planchon}, Koch-Tataru solution in $\text{BMO}^{-1}$ \cite{Koch-Tataru}, etc. Recently, several works are devoted to the extension of (NS) to (INS) in the critical functional framework. Danchin \cite{Danchin2003} and Abidi \cite{Abidi2007} proved that if $\rho_0$ is close to a positive constant in $\dot B^{d/p}_{p,1}(\R^d)$ and $u_0$ is sufficiently small in $\dot B^{-1+d/p}_{p,1}$, then there is a global solution to \eqref{INS} with the initial data $(\rho_0, u_0)$ for all $p\in(1,2d)$ and the uniqueness holds for $p\in(1,d]$. The existence result has been extended to general Besov spaces in \cite{Abidi-Paicu2007, Danchin_Mucha2012, Paicu_zhang2012} even without the size restriction for the density, see \cite{Abidi_Gui2021, Abidi_Gui_Zhang2012, Abidi_Gui_Zhang2013, Abidi_Gui_Zhang2023}. In \cite{Danchin_Mucha2012}, Danchin and Mucha proved the existence and uniqueness for $p\in[1,2d)$, with $\rho_0$ close to a positive constant in the multiplier space $\mathcal M(\dot B^{-1+d/p}_{p,1})$.

In all these aforementioned results, the density has to be at least continuous or near a positive constant, which excludes some physical cases when the density is discontinuous and has large variations along a hypersurface (but still bounded). For results concerning initial data $u_0\in H^s(\R^d)$ with $s>\f d2-1$, and $0\leq\rho_0\in L^{\infty}$, bounded away from zero, we refer to \cite{K, S, DM3, Paicu_Zhang_Zhang_CPDE, CZZ}.
Note that the norms of the initial velocity are not critical in the sense that they are not invariant under the scaling \eqref{Eq.scaling}.

The first result where $\rho_0$ is merely bounded and $u_0$ lies in a critical space was obtained by the fourth author \cite{Zhang_Adv}, where he established the global existence of solutions to 3-D (INS) with initial density $0\leq\rho_0\in L^\infty(\R^3)$, bounded away from zero, and initial velocity $u_0$ sufficiently small in a critical Besov space $\dot B^{1/2}_{2,1}(\R^3)$. Later on, the uniqueness has been proved by Danchin and Wang \cite{DW}. In \cite{DW}, the authors proved the global existence of solutions to (INS) with $\|\rho_0-1\|_{L^\infty(\R^3)}$  and $\|u_0\|_{ \dot B^{-1+3/p}_{p,1}(\R^3)}$ being small for $p\in (1,3)$, and also uniqueness for $p\in (1,2]$; furthermore, weak-strong uniqueness is proved for $p\in (1,3)$ under the additional assumption that $u_0\in L^2(\R^3)$. In these results, the summability index 1 in the Besov space ensures that the gradient of the velocity is in $L^1(0,T; L^\infty)$, which is a key ingredient in the proof of uniqueness, see also \cite{Danchin2024}.

In \cite{HSWZ2}, the authors have extended the 2-D Leray  weak solution and Fujita-Kato solution to (INS),  where the density is not required to be close to a constant and even allows the presence of vacuum in the case of $\R^3$. The proof of uniqueness is rather non-trivial due to the absence of the condition $\nabla u\in L^1(0,T;L^{\infty})$ when $u_0\in L^2(\R^2)$ or $u_0\in \dot H^{\f12}(\R^3)$. We also mention \cite{DV} where Danchin and Vasilyev obtained a well-posedness result for (INS) in critical tent spaces where the initial velocity lies in a subspace of $\mathrm{BMO}^{-1}(\R^3)$.\smallskip

\subsection{Main results}
Let us first recall a celebrated theorem by Cannone, Meyer and Planchon.

\begin{thm}[\cite{Cannone-Meyer-Planchon}]\label{thm.Cannone-Meyer-Planchon}
    Let $p\in (3,\infty)$. Given a divergence-free vector field $u_0\in \dot{B}^{-1+\f3p}_{p,\infty}(\R^3)$ small enough,
    then (NS) has a unique global solution $u\in L^{\infty}(\R^+;\dot{B}^{-1+\f3p}_{p,\infty}(\R^3))\cap \widetilde{L}^1(\R^+;\dot{B}^{1+\f3p}_{p,\infty}(\R^3)).$
\end{thm}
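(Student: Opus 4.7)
My plan is to reformulate (NS) via Duhamel's principle as
\[
u(t)=e^{t\Delta}u_0+B(u,u)(t), \qquad B(f,g)(t)\eqdef -\int_0^t e^{(t-s)\Delta}\PP\,\dive(f\otimes g)(s)\,\ds,
\]
and solve it by Banach's contraction principle in the resolution space $X\eqdef \LBP\cap \widetilde{L}^1(\R^+;\dot B^{1+3/p}_{p,\infty}(\R^3))$, equipped with the natural sum-norm $\|\cdot\|_X$. It will suffice to establish a linear estimate $\|e^{t\Delta}u_0\|_X\leq C_0\|u_0\|_{\BP}$ together with a bilinear estimate $\|B(f,g)\|_X\leq C_1\|f\|_X\|g\|_X$; the standard abstract fixed-point lemma then produces a unique solution in a small closed ball of $X$ as soon as $4C_0C_1\|u_0\|_{\BP}<1$.

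\textbf{Linear step.} Dyadic-localizing by $\dot\Delta_j$ and using the pointwise heat-kernel bound $\|e^{t\Delta}\dot\Delta_j f\|_{L^p}\lesssim e^{-ct\,2^{2j}}\|\dot\Delta_j f\|_{L^p}$, the $L^\infty_t$-part of $X$ follows at once. For the $\widetilde L^1_t$-part, integrating $e^{-ct\,2^{2j}}$ in $t$ contributes a factor $2^{-2j}$, which exactly compensates the shift from regularity $-1+3/p$ to $1+3/p$. The same computation yields $\|e^{t\Delta}u_0\|_{\widetilde L^r_t(\dot B^{-1+3/p+2/r}_{p,\infty})}\lesssim \|u_0\|_{\BP}$ for every $r\in[1,\infty]$; by real interpolation this gives the embedding $X\hookrightarrow \widetilde L^2_t(\dot B^{3/p}_{p,\infty})$, which is the natural space in which to evaluate the nonlinear term.

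\textbf{Bilinear step and main obstacle.} Applying the parabolic smoothing to $B$ and using that $\PP\,\dive$ is a first-order Fourier multiplier reduces the bilinear bound to a product law of the shape
\[
\|f\otimes g\|_{\widetilde L^1_t(\dot B^{3/p}_{p,\infty})}\lesssim \|f\|_{\widetilde L^2_t(\dot B^{3/p}_{p,\infty})}\,\|g\|_{\widetilde L^2_t(\dot B^{3/p}_{p,\infty})},
\]
which is the heart of the proof. Fixing $t$, Bony's paraproduct decomposition $fg=T_fg+T_gf+R(f,g)$ handles each piece: since $3/p>0$, both paraproducts are continuous $\dot B^{3/p}_{p,\infty}\times \dot B^{3/p}_{p,\infty}\to \dot B^{3/p}_{p,\infty}$ by the classical estimates, while the remainder is controlled via the Bernstein-type embedding $\dot B^{6/p}_{p/2,\infty}\hookrightarrow \dot B^{3/p}_{p,\infty}$. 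Time integrability is then closed by Hölder $L^2_t\cdot L^2_t\hookrightarrow L^1_t$. The main difficulty — and precisely the reason the statement is formulated with the Chemin--Lerner $\widetilde L^r_t$ scale rather than the ordinary $L^r_t$ scale — is that the $\ell^\infty_j$-summation in the Besov index does not interchange with the time integral on the usual scale, producing a spurious logarithmic loss in the remainder term; working systematically in $\widetilde L^r_t$ absorbs this loss cleanly. Once the bilinear bound is in place, the contraction argument delivers, simultaneously, global existence and uniqueness of the solution in the small-data ball of $X$.
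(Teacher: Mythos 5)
This statement is a cited classical result; the paper does not prove it. The closest thing to a "paper proof" is the INS analogue in Section~\ref{Sec.Cannone-Meyer-Planchon}, which does \emph{not} use Chemin--Lerner spaces or Bony decomposition at all: it sets up the fixed-point in Lorentz-in-time Kato spaces $L^{q,\infty}_t(L^p_x)$ with $3/p+2/q=1$ (Proposition~\ref{prop.u_L^q,infty(L^p)} and Corollary~\ref{cor.convolution}), estimating $\|\rho u\otimes u\|_{L^{q/2,\infty}_t(L^{p/2})}$ by H\"older and then recovering one derivative through the heat kernel $L^{p/2}\to L^p$. So your route is genuinely different from the one the authors later deploy.

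More importantly, the heart of your argument has a real gap. The product law you invoke, $\dot B^{3/p}_{p,\infty}\cdot\dot B^{3/p}_{p,\infty}\hookrightarrow\dot B^{3/p}_{p,\infty}$, is false at the endpoint $r=\infty$: for the paraproduct $T_fg$ one needs $\|\dot S_{j-1}f\|_{L^\infty}$, and
\[
\|\dot S_{j-1}f\|_{L^\infty}\lesssim\sum_{k\le j-2}2^{3k/p}\|\dot\Delta_kf\|_{L^p}\lesssim\sum_{k\le j-2}\|f\|_{\dot B^{3/p}_{p,\infty}},
\]
which diverges precisely because $\dot B^{3/p}_{p,\infty}\not\hookrightarrow L^\infty$. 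Passing through $\widetilde L^2_t(\dot B^{3/p}_{p,\infty})$ does not repair this: the same sum over low frequencies appears in $\|\dot S_{j-1}f\|_{L^2_tL^\infty}$, and the time integration gives no gain in $j$. Only the remainder term closes as you describe. The Chemin--Lerner strategy can still be salvaged, but not by funneling both factors through $\widetilde L^2_t(\dot B^{3/p}_{p,\infty})$. Instead, estimate the paraproduct with an $L^\infty_t\times L^1_t$ H\"older split using the full norm of $X$: put the low-frequency factor in $\widetilde L^\infty_t(\dot B^{-1+3/p}_{p,\infty})$, so that $\|\dot S_{j-1}f\|_{L^\infty_tL^\infty_x}\lesssim 2^{j}\|f\|_{\widetilde L^\infty_t\dot B^{-1+3/p}_{p,\infty}}$ (the factor $2^j$ is now under control because you are \emph{below} the critical embedding threshold), and put the high-frequency factor in $\widetilde L^1_t(\dot B^{1+3/p}_{p,\infty})$; the extra derivative $2^{-j(1+3/p)}$ beats the $2^j$ and lands in $\widetilde L^1_t(\dot B^{3/p}_{p,\infty})$. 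The remainder $R(f,g)$ is handled as you wrote, with the $\widetilde L^2_t$ interpolate and Bernstein from $L^{p/2}$ to $L^p$ (which requires $p<\infty$). Without this restructuring, the paraproduct step as written is incorrect and the contraction does not close.
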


Our first main result is to extend Cannone-Meyer-Planchon solution of (NS) to (INS).
\begin{thm}\label{thm.INS_Cannone-Meyer-Planchon}
    Let $p\in(3,\infty)$ and $q\in (1,\infty)$ be such that $\f3p+\f2q=1$. There exists a constant $\varepsilon_0>0$ such that if
    \begin{align}\label{thm.small condition}
        \|\rho_0-1\|_{L^{\infty}(\R^3)}+\|m_0\|_{\BP(\R^3)}<\varepsilon_0,
    \end{align}
    then the system \eqref{INS} has a global strong solution $(\rho,u,\nabla P)$ with $ \|\rho(t)-1\|_{L^{\infty}}=\|\rho_0-1\|_{L^{\infty}}$,  and the following properties:
    \begin{itemize}
        \item $\rho u, t\Delta u, t\nabla P\in \LQQ(\R^+; L^p)$ and $t^{1+\f{1}{2q}}\Delta u,t^{1+\f{1}{2q}}\nabla P\in L^{2q,\infty}(\R^+; L^p)$;
        \item $t u,t^2\dot{u}\in L^\infty(\R^+;\dot B^{1+\f3p}_{p,\infty})$ and $t^{1+\f{1}{2q}}u\in L^\infty(\R^+;\dot{B}^{1+\f3p+\f1q}_{p,\infty})$, {where $\dot u\eqdef u_t+u\cdot\nabla u$};
        \item $t\dot{u}, t^2\Delta\dot{u}\in \LQQ(\R^+; L^p)$;
        \item $\|u(t)\|_{L^p}\leq C \varepsilon_0 t^{-\f1q}$, $\|\nabla u(t)\|_{L^p}\leq C \varepsilon_0 t^{-\f12-\f1q}$;
        \item $\|u(t)\|_{L^{\infty}}\leq C \varepsilon_0 t^{-\f12}$, $\|\nabla u(t)\|_{L^{\infty}}\leq C \varepsilon_0 t^{-1}$, where $C$ only depends on $\|\rho_0\|_{L^{\infty}}$.
    \end{itemize}
    Moreover, thus obtained solution is unique for $3<p<6$.
\end{thm}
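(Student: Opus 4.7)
The plan is to obtain existence by a Banach contraction on a critical space of Lorentz type in time tailored to the regularity $m_0\in\BP$, and then to prove uniqueness for $3<p<6$ by a stability argument in Lagrangian coordinates, so as to eliminate the transport of $\rho$. A key simplification throughout is that $\rho$ satisfies a pure transport equation along the flow of $u$, so $\|\rho(t)-1\|_{L^\infty}=\|\rho_0-1\|_{L^\infty}<\varepsilon_0$ remains globally small, and any occurrence of $(\rho-1)$ can be treated as a small-coefficient perturbation.

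For the linear analysis I would use Peetre's heat-kernel characterisation of Besov spaces of third index $\infty$: with $2/q=1-3/p$, the assumption $m_0\in\BP$ is equivalent to $\sup_{t>0}t^{1/q}\|e^{t\Delta}m_0\|_{L^p}<\infty$, which immediately places $e^{t\Delta}m_0$ in $L^{q,\infty}(\R^+;L^p)$. Differentiating delivers the full family of weighted Lorentz bounds in the statement, notably $t\Delta e^{t\Delta}m_0\in L^{q,\infty}(\R^+;L^p)$ and $t^{1+1/(2q)}\Delta e^{t\Delta}m_0\in L^{2q,\infty}(\R^+;L^p)$, which dictate the norms defining the solution space $X$. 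The nonlinear step consists in rewriting the momentum equation in the perturbative form
\beno
\partial_tu-\Delta u+\nabla P=-u\cdot\nabla u-(\rho-1)\dot u,\quad\dive u=0,
\eeno
and in proving that the corresponding Duhamel map is a contraction on a small ball of $X$. The bilinear term $u\cdot\nabla u$ is controlled by O'Neil's H\"older and convolution inequalities in Lorentz spaces, whose time scaling matches exactly the balance $3/p+2/q=1$; the linear-in-$\dot u$ contribution $(\rho-1)\dot u$ is absorbed by the smallness of $\|\rho_0-1\|_{L^\infty}$. Once $u$ is constructed, the bounds on $\dot u$ and $\Delta\dot u$ are recovered by differentiating the momentum equation in time and rerunning the same estimates, while the pointwise $L^p$ and $L^\infty$ decay rates follow by Sobolev embedding and interpolation.

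The hardest part will be the uniqueness for $3<p<6$. The pointwise bound $\|\nabla u(t)\|_{L^\infty}\lesssim t^{-1}$ is not integrable in time, so a direct Gronwall argument in Eulerian coordinates is unavailable. The plan is to pass to Lagrangian coordinates via the flow of $u$, which converts \eqref{INS} into a system with frozen density $\rho_0$ posed in the initial variable $y$. The difference of two Lagrangian velocities then satisfies a linear equation whose perturbation is driven by $\nabla X_1-\nabla X_2$ and the associated inverse Jacobians, both of which are controlled by time integrals of $\nabla(u_1-u_2)$. The restriction $p<6$ enters at the interpolation level: combining the time-Lorentz information on $\nabla u$ and on $t\Delta u$ through Gagliardo-Nirenberg yields a strong enough Lorentz-in-time bound on $\nabla(u_1-u_2)$ in $L^\infty_x$ precisely in this range, after which the Gronwall loop on the difference closes and uniqueness follows; at $p=6$ this interpolation becomes critical and the argument would break down, which explains the apparent gap in the statement.
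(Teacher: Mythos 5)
Your general framework for the existence part (heat-kernel characterisation of $\BP$, Duhamel with Lorentz-in-time norms, O'Neil for the bilinear term) is consistent with the paper, but there are two concrete gaps in the plan.

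\textbf{Existence: the term $(\rho-1)\dot u$ does not close a contraction on $u$.} You propose iterating
\begin{equation*}
\partial_t u-\Delta u+\nabla P=-u\cdot\nabla u-(\rho-1)\dot u
\end{equation*}
and absorbing $(\rho-1)\dot u$ by the smallness of $\|\rho_0-1\|_{L^\infty}$. This does not work at the base level. The critical norm for the contraction must be $\|u\|_{L^{q,\infty}(\R^+;L^p)}$, which gives no control of $\dot u$; even the conclusion of the theorem only places $t\dot u$, not $\dot u$, in $L^{q,\infty}(L^p)$. Plugging $(\rho-1)\dot u=t^{-1}(\rho-1)(t\dot u)$ into the Duhamel integral lands you in $L^{q/(q+1),\infty}$ in time, and no convolution estimate with the heat kernel recovers $L^{q,\infty}(L^p)$. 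The paper avoids this by working with $v\eqdef\PP(\rho u)$ instead of $u$: its heat equation $\partial_t v-\Delta v=\Delta(u-v)-\PP\dive(\rho u\otimes u)$ replaces $(\rho-1)\dot u$ with $\Delta(u-v)=\Delta\PP((1-\rho)u)$, a second-order term of small \emph{amplitude} (rather than a term involving a time derivative). By the maximal-regularity bound $\|\Delta Af\|_{L^{q,\infty}(L^p)}\lesssim\|f\|_{L^{q,\infty}(L^p)}$ this is absorbed by the left-hand side at the critical level, giving Proposition~\ref{prop.u_L^q,infty(L^p)}. The higher-order weighted estimates then do use the small coefficient $(1-\rho)$ in front of $(t^\alpha u)_t$, but only once $\|u\|_{L^{q,\infty}(L^p)}$ has been obtained. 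You need this $v$-trick (or an equivalent reformulation) at the base level.

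\textbf{Uniqueness: the Lagrangian change of variables is blocked by the same obstruction you flag.} You observe, correctly, that $\|\nabla u(t)\|_{L^\infty}\lesssim t^{-1}$ is not integrable near $t=0$, and conclude that Eulerian Gr\"onwall is unavailable. You then switch to Lagrangian coordinates. But the Lagrangian flow $X(t,y)$ is controlled by $\|\nabla X(t)-\mathrm{Id}\|_{L^\infty}\lesssim\int_0^t\|\nabla u\|_{L^\infty}\,\ds$, so the non-integrability at $t=0$ makes the change of variables fail to be bi-Lipschitz uniformly as $t\to0+$; the Lagrangian formulation does not tame the singularity, it imports it. The paper in fact keeps Eulerian coordinates and circumvents the non-integrability by a duality argument for the transported density: the dual test function $\phi$ satisfies $\|\nabla\phi(s)\|_{L^{p'}}\leq C(t/s)^{C\varepsilon_0}\|\nabla\varphi\|_{L^{p'}}$, and the key point is that $(t/s)^{C\varepsilon_0}$ \emph{is} integrable on $(0,t)$ once $\varepsilon_0$ is small, giving $\|\delta\!\rho(t)\|_{\dot W^{-1,p}}\lesssim t^{1-1/q}\|\delta\!v\|_{L^{q,\infty}(0,t;L^p)}$, and then closing via Duhamel on $\delta\!v=\PP(\rho\,\delta\!u)$. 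The restriction $p<6$ does not come from an interpolation between $\nabla u$ and $t\Delta u$: it comes from the O'Neil estimate $\|Af\|_{L^{q,\infty}(0,T;L^p)}\lesssim\|f\|_{L^{r,\infty}(0,T;L^{p/2})}$ with $1/r=2/q+1/2=3/2-3/p$, which requires $r>1$, i.e.\ $p<6$, in order to control the terms $\nabla F\cdot\nabla\dot{\bar u}$ and $\rho\,\delta\!u\cdot\nabla\bar u$ in the $\delta\!v$ equation. So the shape of your uniqueness argument needs to be replaced, not merely tightened.
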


\begin{rmk}\label{cor.high oscillation}
As $-1+\f3p<0$, our result generates the global solution of (INS) with  highly oscillating initial velocity. Let $\varepsilon_0>0$ be given by Theorem \ref{thm.INS_Cannone-Meyer-Planchon} and let $\rho_0\in L^\infty(\R^3)$ be such that $\|\rho_0-1\|_{L^{\infty}}<\varepsilon_0$. Given $\phi\in \mathcal{S}(\R^3)$, we define
 \begin{align*}
     \varphi_{\varepsilon}(x_1, x_2, x_3)\eqdef\cos\bigl(\f{x_3}{\varepsilon}\bigr)\phi(x_1,x_2,x_3),\quad\forall\ \varepsilon\in(0,1).
 \end{align*}
 There is a constant $C>0$ such that for $\varepsilon$ small enough, the smooth divergence free vector field
 \begin{align*}
     u_{0,\varepsilon}(x)=(\p_2\varphi_{\varepsilon}(x),-\p_1\varphi_{\varepsilon}(x),0)
 \end{align*}
 satisfies (using the embedding $L^3(\R^3)\hookrightarrow\BP(\R^3)$ and the uniform boundedness of $u_{0,\varepsilon}$ in $L^3$)
 \begin{align*}
\|u_{0,\varepsilon}\|_{\BP}+\|\rho_0u_{0,\varepsilon}\|_{\BP}\leq C(\varepsilon^{1-\f3p}+\varepsilon_0).
 \end{align*}
Then for $\varepsilon$ small enough, $(\rho_0,u_{0,\varepsilon})$ generates a unique global solution to \eqref{INS} for $p\in (3,6)$.
\end{rmk}

In a seminal paper \cite{Leray1934}, Leray constructed a global weak  solution of classical Navier-Stokes system for any initial data in $L^2(\R^3)$.
%There are a number of papers related to the weak-strong uniqueness of the Navier-Stokes system. It was proven in \cite{Leray1934} that for $u_0\in H^1(\R^3)$ and $u_0\in L^2\cap L^p(\R^3)(3<p\leq \infty)$, they proved the uniqueness in the slightly narrower class of Leray's solutions.
In \cite{Chemin2011}, Chemin proved the weak-strong uniqueness of Cannone-Meyer-Planchon solution with $u_0\in H^s\cap \dot{B}^{-1+\f3p}_{p,\infty}(\R^3)$ for $s>0$, which was later refined by Barker to $u_0\in L^2\cap \dot{B}^{-1+\f3p}_{p,\infty}(\R^3)$  \cite{Barker2018}. Dong and Zhang \cite{DZ} provided the weak-strong uniqueness of Koch-Tataru solution with $u_0\in H^s\cap \text{BMO}^{-1}(\R^3)$ for $s>0$.
For a more comprehensive historical overview of weak-strong uniqueness, readers may refer to \cite{Germain2006}.

Lions \cite{Lions_vol1} provided a global weak solution to \eqref{INS} for initial velocity in $L^2(\R^3)$ and initial density satisfying $0\leq\rho_0\in L^{\infty}$ along with some additional technical assumptions.
Recently, in \cite{HSWZ1}, the authors prove the 2-D weak-strong uniqueness  result, which includes the case of density patch or vacuum bubble, while the initial velocity $u_0$ belongs to non-critical space $H^1(\R^2)$. In a critical framework, the weak-strong uniqueness has been obtained in \cite{DW, CSV, HSWZ2}. In  \cite{HSWZ2}, the authors proved the weak-strong uniqueness under the following assumptions:
\begin{itemize}
    \item 2-D case: $u_0\in L^2(\R^2)$, $\rho_0\in L^\infty(\R^2)$ and $\inf\rho_0>0$;
    \item 3-D case: $u_0\in L^2(\R^3)\cap \dot H^{1/2}(\R^3)$ and $0\leq\rho_0\in L^\infty(\R^3)$.
\end{itemize}

For Cannone-Meyer-Planchon solution of \eqref{INS} obtained in Theorem \ref{thm.INS_Cannone-Meyer-Planchon}, we establish the following weak-strong uniqueness.

\begin{thm}[Weak-strong uniqueness]\label{thm.weak-strong uniqueness}
Assume that the initial data satisfy $\rho_0u_0\in \BP(\R^3) \cap L^2(\R^3)$ for $p\in (3,\infty)$ and the assumption \eqref{thm.small condition}.
Let $(\rho,  u)$ be the solution of \eqref{INS} obtained in Theorem \ref{thm.INS_Cannone-Meyer-Planchon}. Then $(\rho, u)$ is unique among Lions weak solutions associated with $(\rho_0, u_0)$.
\end{thm}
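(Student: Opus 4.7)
The approach is the classical relative-energy method, suitably adapted to the critical regularity setting where $\nabla u$ fails to be in $L^1_t(L^\infty)$. Let $(\rho,u,\nabla P)$ denote the Cannone-Meyer-Planchon solution from Theorem \ref{thm.INS_Cannone-Meyer-Planchon} and let $(\bar\rho,\bar u)$ be any Lions weak solution with the same initial data. Write $\delta u\eqdef u-\bar u$ and $\delta\rho\eqdef\rho-\bar\rho$. The $L^\infty$ maximum principle for transport by a divergence-free field, combined with \eqref{thm.small condition}, yields $1-\varepsilon_0\le\rho,\bar\rho\le 1+\varepsilon_0$; in particular $\bar\rho$ is bounded away from $0$ and $\|\delta\rho\|_{L^\infty(\R^+\times\R^3)}\le 2\varepsilon_0$. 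My aim is to show that the relative energy
\[
\mathcal{E}(t)\eqdef\tfrac12\int_{\R^3}\bar\rho(t)|\delta u(t)|^2\,dx+\int_0^t\int_{\R^3}|\nabla\delta u|^2\,dx\,ds
\]
vanishes identically on $\R^+$; together with DiPerna-Lions uniqueness for the transport equation this will force $\bar u=u$ and $\bar\rho=\rho$.

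The first step is to derive a differential inequality for $\mathcal{E}$ by combining, after a Friedrichs mollification in time, three weak identities: the Leray-type energy inequality $\tfrac12\int\bar\rho|\bar u|^2+\int_0^t\int|\nabla\bar u|^2\le\tfrac12\int\rho_0|u_0|^2$ built into Lions's definition; the weak momentum equation for $\bar u$ tested against the strong solution $u$, which is a legitimate test function because the bullet-point estimates of Theorem \ref{thm.INS_Cannone-Meyer-Planchon} place $\rho u$, $t\Delta u$ and $t\nabla P$ in $\LQQ(\R^+;L^p)$ and $tu$ in $L^\infty(\R^+;\dot B^{1+3/p}_{p,\infty})$; and the renormalized continuity equation for $\bar\rho$ tested against $|u|^2/2$, available through the DiPerna-Lions theory for transport with $L^\infty$ density and $L^2_t\dot H^1$ velocity. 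The convective contributions cancel thanks to $\dive\bar u=0$ and $\pa_t\bar\rho+\bar u\cdot\nabla\bar\rho=0$, leaving
\[
\mathcal{E}(t)\le\int_0^t\!\!\int\delta\rho\,\dot u\cdot\delta u\,dx\,ds-\int_0^t\!\!\int\bar\rho(\delta u\cdot\nabla u)\cdot\delta u\,dx\,ds=:I_1+I_2.
\]

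The quadratic term $I_2$ is controlled by H\"older and the Gagliardo-Nirenberg inequality $\|\delta u\|_{L^{2p/(p-1)}}^2\le C\|\delta u\|_{L^2}^{2-3/p}\|\nabla\delta u\|_{L^2}^{3/p}$; Young's inequality absorbs a fraction into the dissipation, and the remainder is handled by a Lorentz H\"older bound using $\nabla u\in L^{2q,\infty}(\R^+;L^p)$. The genuinely hard term is $I_1$: $\delta\rho$ carries no spatial regularity and $\delta u$ sits only in $L^\infty_t L^2\cap L^2_t\dot H^1$, while $\dot u$ lives merely in the critical space $L^p$. The three saving features are (a) the smallness $\|\delta\rho\|_{L^\infty}\le 2\varepsilon_0$, (b) the time-weighted Lorentz estimate $t\dot u\in \LQQ(\R^+;L^p)$ granted by Theorem \ref{thm.INS_Cannone-Meyer-Planchon}, and (c) the vanishing initial condition $\delta u(0)=0$, which allows one to write $\delta u(s)=\int_0^s\pa_\tau\delta u\,d\tau$ and close the argument via Lorentz-H\"older in time. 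The outcome is a bound of the shape $|I_1|\le C\varepsilon_0\bigl(\mathcal{E}(t)+\int_0^t\mathcal{E}(s)g(s)\,ds\bigr)$ with $g\in L^1(\R^+)$; the smallness of $\varepsilon_0$ then absorbs the $\mathcal{E}(t)$ piece into the left-hand side and Gr\"onwall forces $\mathcal{E}\equiv 0$. The subtlest point I anticipate is the rigorous justification of using $u$ and $|u|^2$ as test functions in the weak formulations, since a Lions weak solution admits very little temporal regularity and the mollification must respect the Lorentz time weights of Theorem \ref{thm.INS_Cannone-Meyer-Planchon} as well as the vanishing $\delta u(0)=0$.
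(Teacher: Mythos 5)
Your relative-energy scheme identifies the right enemy, namely the term $I_1=\int_0^t\int\delta\rho\,\dot u\cdot\delta u$, but the asserted bound $|I_1|\le C\varepsilon_0\bigl(\mathcal{E}(t)+\int_0^t\mathcal{E}(s)g(s)\,ds\bigr)$ with $g\in L^1(\R^+)$ is not achievable in this setting, and the direct Gr\"onwall argument therefore does not close. To see the obstruction concretely: the only usable control on $\delta\rho$ beyond $\|\delta\rho\|_{L^\infty}\le 2\varepsilon_0$ is a negative-Sobolev bound $\|\delta\rho(t)\|_{\dot H^{-1}}\lesssim t\,B(t)$ coming from the transport estimate with the CMP field $u$ (whose gradient obeys $\|\nabla u(t)\|_{L^\infty}\le C\varepsilon_0 t^{-1}$). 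Feeding this into a duality bound $|\int\delta\rho\,\dot u\cdot\delta u|\le\|\delta\rho\|_{\dot H^{-1}}\bigl(\|\nabla\dot u\|_{L^\infty}\|\delta u\|_{L^2}+\|\dot u\|_{L^\infty}\|\nabla\delta u\|_{L^2}\bigr)$ and absorbing a quarter of the dissipation, the Gr\"onwall density is essentially $g(t)\sim\|\nabla u(t)\|_{L^\infty}+\|t\nabla\dot u(t)\|_{L^\infty}+\|t\dot u(t)\|^2_{L^\infty}$. For the critical CMP solution, each of these contributions behaves like $t^{-1}$ near $t=0$ (e.g.\ $\|\nabla\dot u(t)\|_{L^\infty}\sim t^{-2}$ by interpolating $t\dot u\in L^{q,\infty}L^p$ and $t^2\nabla^2\dot u\in L^{q,\infty}L^p$). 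The same $t^{-1}$ singularity appears in your $I_2$: H\"older plus Gagliardo--Nirenberg gives a Gr\"onwall weight $\|\nabla u(t)\|_{L^p}^{2p/(2p-3)}\sim t^{-1}$. So $\int_0^T g\,dt=+\infty$, and Gr\"onwall yields the indeterminate $0\cdot\infty$ from $\mathcal{E}(0)=0$. Using $\|\delta\rho\|_{L^\infty}\le 2\varepsilon_0$ alone cannot rescue matters, since $\delta u$ lives only in $L^\infty_tL^2\cap L^2_t\dot H^1\hookrightarrow L^\infty_tL^r$ for $r\in[2,6]$, whereas $\dot u\in L^p$ with $p>3$ would require $\delta u\in L^{p'}$ with $p'<3/2$; and the suggestion to exploit $\delta u(s)=\int_0^s\partial_\tau\delta u\,d\tau$ is not available because $\partial_t\bar u$ has no usable regularity for a Lions weak solution.

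The paper circumvents precisely this logarithmic divergence through the Barker-type decomposition of $\dot B^{-1+3/p}_{p,\infty}$ (Lemma~\ref{lem.Besov decomposition} and Corollary~\ref{cor.decomposition}). One splits $m_0=(m_0)_1+(m_0)_2$ with $(m_0)_1\in\BPS$ and $(m_0)_2\in\dot B^s_{2,\infty}$, sets $m_{0,N}=(m_0)_1+\dot S_N(m_0)_2$, and runs the relative-energy estimate not against $u$ but against the strong solutions $u^{(N)}$ emanating from $m_{0,N}$. Because $m_{0,N}\in\BPS$ with $\|m_{0,N}\|_{\BPS}\lesssim\varepsilon_0^{p/\tilde p}2^{N\sigma}$, the subcritical estimates of Section~\ref{Sec.weak-strong} give $\|\nabla u^{(N)}(t)\|_{L^\infty}\lesssim\varepsilon_0^{p/\tilde p}\min\{2^{N\sigma}t^{-1+\sigma/2},t^{-1}\}$, so the Gr\"onwall exponent is finite, of order $\varepsilon_0^{p/\tilde p}(\ln T+N)$. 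This is played off against the nonzero initial error $\|u_0-u_{0,N}\|_{L^2}\lesssim 2^{-Ns}$, and the net bound $N_2(T)\lesssim T^{C\varepsilon_0^{p/\tilde p}}2^{-N(s-C\varepsilon_0^{p/\tilde p})}$ tends to $0$ as $N\to\infty$ precisely because $\varepsilon_0$ is small. Your proposal is missing this approximation-and-limit device, and it is not a technicality that could be patched by more careful Lorentz bookkeeping: in the critical framework the instantaneous smoothing rate $\|\nabla u\|_{L^\infty}\sim t^{-1}$ is sharp and genuinely non-integrable at the origin.
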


\begin{rmk}
The uniqueness part in the critical spaces in Theorem \ref{thm.INS_Cannone-Meyer-Planchon} requires $p\in(3,6)$. In contrast, the weak-strong uniqueness in Theorem \ref{thm.weak-strong uniqueness} holds for all $p\in (3,\infty)$. However, due to technical reasons, our method for proving Theorem \ref{thm.weak-strong uniqueness}, which relies on a decomposition of $\BP$, can not be extended to the case $p=+\infty$. Consequently, our method can not applicable to $\mathrm{BMO}^{-1}(\R^3)$.
\end{rmk}

Very recently, the authors \cite{HSWZ2} have established global well-posedness of the system \eqref{INS} with $u_0\in \dot{H}^{1/2}(\R^3)$ being small and an upper bound on $\rho_0$. Building upon that, we further improve the condition from $\dot{H}^{1/2}$ to $\dot B^{1/2}_{2,\infty}$, and our third result is stated as follows {(here the initial condition is $ (\rho,u)|_{t=0}=(\rho_0,u_0)$)}.

\begin{thm}\label{thm.self similar}
 Given the initial data $(\rho_0,u_0)$ satisfying $0\leq \rho_0(x)\leq \|\rho_0\|_{L^{\infty}}$ and $\rho_0\not\equiv 0$, $ u_0\in \BB(\R^3)$, $\dive u_0=0$, there exists $\varepsilon_0>0$ depending only on $\|\rho_0\|_{L^{\infty}}$ such that if $\|u_0\|_{\BB}< \varepsilon_0$, then the system \eqref{INS} has a unique global weak solution $(\rho,u,\nabla P)$ with $0\leq \rho(t,x)\leq \|\rho_0\|_{L^{\infty}}$, $\sqrt\rho u\in C([0, +\infty); L^{3,\infty}(\R^3))$ and the following properties (for any $0<T_1<\infty$)
    \begin{itemize}
        \item $t^{1/4}\nabla u\in L^\infty(\R^+; L^2(\R^3))$;
        \item $\sqrt\rho\dot u,\ \nabla^2u,\ \nabla\dot u\in L^2(T_1,2T_1; L^2(\R^3))$, and $\sqrt\rho\dot u\in L^\infty(T_1,2T_1; L^2(\R^3))$, where $\dot u\eqdef u_t+u\cdot\nabla u$;
        \item $\sqrt\rho u_t,\ \sqrt{\rho_0} u_t,\ \nabla u_t\in L^2(T_1,2T_1; L^2(\R^3))$ and $\sqrt\rho u_t\in L^\infty(T_1,2T_1; L^2(\R^3))$;
        \item $\nabla u\in L^1(T_1,2T_1; L^\infty(\R^3))$.
    \end{itemize}
\end{thm}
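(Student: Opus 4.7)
The plan is to combine approximation with a time-weighted energy scheme calibrated to the Besov regularity $\BB$: one cannot directly invoke \cite{HSWZ2} because $u_0\in\BB$ need not lie in $\dot H^{1/2}$, so the bulk of the work is a uniform a priori estimate depending only on $\|u_0\|_{\BB}$ and $\|\rho_0\|_{L^\infty}$. First I would smooth the data, choosing $\rho_0^n\in C^\infty$ with $\tfrac1n\leq\rho_0^n\leq\|\rho_0\|_{L^\infty}$ and divergence-free $u_0^n\in\mathcal S(\R^3)$ with $u_0^n\to u_0$ in $\mathcal S'$ and $\|u_0^n\|_{\BB}\leq C\|u_0\|_{\BB}$ (for example by combining a spatial cut-off with a low-frequency Littlewood--Paley truncation). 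Classical parabolic theory yields smooth local solutions $(\rho_n,u_n)$; I will show that the uniform a priori bounds below extend these to all of $[0,\infty)$ and survive the passage to the limit.

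The heart of the argument is the weighted estimate
\beno
\sup_{t>0}t^{1/2}\|\nabla u_n(t)\|_{L^2}^2+\int_0^{+\infty}s^{1/2}\bigl(\|\sqrt{\rho_n}\dot u_n\|_{L^2}^2+\|\nabla^2 u_n\|_{L^2}^2\bigr)\ds \leq C(\|\rho_0\|_{L^\infty})\,\|u_0\|_{\BB}^2,
\eeno
with $\dot u_n\eqdef \pa_t u_n+u_n\cdot\nabla u_n$, valid once $\|u_0\|_{\BB}$ is small enough. I would derive it by testing the momentum equation against $\dot u_n$ to produce $\tfrac12\tfrac{d}{dt}\|\nabla u_n\|_{L^2}^2+\|\sqrt{\rho_n}\dot u_n\|_{L^2}^2 = \text{(cubic remainder)}$, then multiplying by $t^{1/2}$ and integrating; on the right one must absorb $\tfrac14\int_0^t s^{-1/2}\|\nabla u_n\|_{L^2}^2\,\ds$, which, via Duhamel comparison with the heat flow, is controlled by the characterization $\sup_{t>0}t^{1/2}\|\nabla e^{t\Delta}u_0\|_{L^2}^2\lesssim \|u_0\|_{\BB}^2$. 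Closure of the nonlinear terms requires the variable-density Stokes bound $\|\nabla^2 u_n\|_{L^2}+\|\nabla P_n\|_{L^2}\lesssim \|\rho_n\dot u_n\|_{L^2}+\text{l.o.t.}$ developed in \cite{HSWZ2} (where the absence of a positive lower bound on $\rho$ is compensated by weighted interpolation), and is set up as a bootstrap on the scale-invariant quantity $X(t)\eqdef \sup_{s\leq t}s^{1/4}\|\nabla u_n(s)\|_{L^2}$.

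With this uniform bound in hand, on each dyadic interval $[T_1,2T_1]$ the Stokes estimate and Gagliardo--Nirenberg in Lorentz scales produce the listed control of $\sqrt{\rho_n}\dot u_n$, $\nabla^2 u_n$, $\nabla u_{n,t}$, and $\nabla u_n\in L^1(T_1,2T_1;L^\infty)$; these same bounds furnish the compactness needed to pass to a weak-$*$ limit $(\rho,u)$, with the density equation handled by the DiPerna--Lions renormalization applied locally in time. The continuity $\sqrt\rho u\in C([0,+\infty);L^{3,\infty})$ at $t=0$ follows from weak continuity, strong convergence on $(0,+\infty)$, and the embedding associated to $\BB$. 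The hardest point, and the main obstacle, is uniqueness near $t=0$: away from the origin the bound $\nabla u\in L^1(T_1,2T_1;L^\infty)$ yields standard Lagrangian stability, but on $(0,T_1]$ one only controls $t^{1/4}\nabla u$ in $L^\infty(L^2)$, so I would close the argument by a weighted energy difference estimate in Lagrangian coordinates that exactly balances the $t^{-1/4}$ singularity of $\|\nabla u\|_{L^2}$, following the strategy devised for the $\dot H^{1/2}$ case in \cite{HSWZ2} and then letting $T_1\to 0^+$.
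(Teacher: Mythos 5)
Your proposed global integral estimate
\begin{equation*}
\sup_{t>0}t^{1/2}\|\nabla u_n(t)\|_{L^2}^2+\int_0^{+\infty}s^{1/2}\bigl(\|\sqrt{\rho_n}\dot u_n\|_{L^2}^2+\|\nabla^2 u_n\|_{L^2}^2\bigr)\,\ds \leq C\|u_0\|_{\BB}^2
\end{equation*}
is false in general for $u_0\in\BB$, and the absorption step that is supposed to produce it cannot work. The obstruction is exactly the distinction between $\dot B^{1/2}_{2,\infty}$ and $\dot H^{1/2}$: from the $L^\infty$-in-time bound $s^{1/2}\|\nabla u(s)\|_{L^2}^2\lesssim\|u_0\|_{\BB}^2$ one only gets $\int_\delta^T s^{-1/2}\|\nabla u(s)\|_{L^2}^2\,\ds\lesssim\ln(T/\delta)$, not a finite bound; a Duhamel comparison with $\sup_t t^{1/2}\|\nabla e^{t\Delta}u_0\|_{L^2}^2$ does not cure this, since the corresponding $\int_0^\infty\|\nabla e^{t\Delta}u_0\|_{L^2}^2\,\dt=\|u_0\|_{\dot H^{1/2}}^2$ can be infinite for data in $\BB\setminus\dot H^{1/2}$. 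For the same reason, $\int_0^\infty s^{1/2}\|\sqrt\rho\dot u\|_{L^2}^2\,\ds$ is not uniformly bounded; all one can prove is a logarithmic estimate on intervals $(T_1,T_2)$, as in Lemma \ref{lem.nabla u_T,2T}. Your bootstrap on $X(t)=\sup_{s\le t}s^{1/4}\|\nabla u_n(s)\|_{L^2}$ will therefore not close through the energy-plus-weight route. The paper sidesteps this entirely: it introduces the auxiliary linear systems \eqref{Eq.u_j_eq} for each dyadic block $u_j$ (transported by the full velocity $u$), proves $\|\sqrt\rho u_j\|_{L^\infty L^2}\lesssim 2^{-j/2}\|u_0\|_{\BB}$ and $\|\nabla u_j\|_{L^\infty L^2}\lesssim 2^{j/2}\|u_0\|_{\BB}$, and obtains $\|\sqrt\rho u\|_{L^\infty L^{3,\infty}}$ and $\|t^{1/4}\nabla u\|_{L^\infty L^2}$ by optimizing the split $\sum_{j\le N}+\sum_{j>N}$ in $N$ (Lemma \ref{lem.rho u L3}). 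The weak-$L^3$ bound on $\sqrt\rho u$ is the linchpin that closes all subsequent energy estimates via smallness, and your proposal has no substitute for it.

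On uniqueness you also depart in a way that would not go through. The paper's Proposition \ref{uni.prop} is an \emph{Eulerian} argument: it estimates $\delta\!\rho$ in $\dot W^{-1,(3,\infty)}$ via the transport equation, uses the \emph{strict} decay $B(\delta)\lesssim\delta^{1/4}$ (which follows from the bound on $\sqrt{\rho_0}u_t$ in Lemma \ref{lem.rho u L3}--\ref{lem.nabla u L_infty}), and balances this against the logarithmic Gr\"onwall factor: $B^2(T)\lesssim\delta^{1/2}(T/\delta)^{C\varepsilon^2}\to0$ as $\delta\to0$ once $\varepsilon$ is small. The point is precisely that $\|\nabla\bar u\|_{L^1(\delta,T;L^\infty)}$ is \emph{not} small uniformly in $\delta$ (it grows like $\varepsilon\ln(T/\delta)$), so a Lagrangian stability argument as you suggest does not degenerate gracefully as $T_1\to0$; what saves the day is that the polynomial decay of $B(\delta)$ dominates the small power $(T/\delta)^{C\varepsilon^2}$. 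Finally, the approximation should keep $\rho_0^n$ uniformly bounded above by $\|\rho_0\|_{L^\infty}$ and away from zero only for the approximate problems (with all constants independent of the lower bound), since the limit is allowed to vacuum; your smoothing is compatible with this but you would still need the density-difference estimate in $\dot W^{-1,(3,\infty)}$ to make the passage to the limit and the uniqueness step rigorous.
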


\begin{rmk}
In contrast to the $\dot{H}^{1/2}$ initial data discussed in \cite{HSWZ2}, Theorem \ref{thm.self similar} gives the first existence result of the forward self-similar solution for (INS).
For instance, consider $u_0(x)=\f{\varepsilon_0}{|x|}\bigl(\f{-x_2}{|x|},\f{x_1}{|x|},0\bigr)$, it is straightforward to verify that $\dive u_0=0$, $\|u_0\|_{\BB}\leq C\varepsilon_0$, but $u_0\not\in \dot{H}^{1/2}$.
\end{rmk}

%Here, we provide a summary remark regarding the uniqueness of the aforementioned theorems.
%\begin{rmk}
%    \begin{itemize}
%        \item[(1)] Regarding our initial data assumptions, we can not derive $\nabla u \in L^1_T(L^\infty)$, which is crucial for controlling  $\delta\!\rho$.
%        Therefore, we primarily rely on the novel method proposed in \cite{HSWZ2}, which does not require $\nabla u \in L^1_t(L^{\infty})$.
%
%        \item[(2)] For the uniqueness part of Theorem \ref{thm.INS_Cannone-Meyer-Planchon}, \ref{thm.weak-strong uniqueness} and \ref{thm.self similar}, we note that the third Besov index being infinity, which especially includes forward self-similar solutions, makes the smallness condition of the initial velocity necessary in all cases.
%
%        \item[(3)] In the uniqueness proof of Theorem \ref{thm.self similar}, due to technical reasons, we only give the uniqueness of strong solutions.
%        The weak-strong uniqueness remains open.
%        \end{itemize}
%
%\end{rmk}

\subsection{Difficulties and our ideas}

 For $m_0\in \BP$ with $p\in (3,\infty)$, most of the usual product laws cannot be used in this case. Our proof is based on an important observation (see Proposition \ref{prop.u_L^q,infty(L^p)}): by considering the equation of $\PP(\rho u)$ and using the assumption that $\|\rho_0-1\|_{L^{\infty}}$ and $\|m_0\|_{\BP}$ are small, we first derive the critical estimate $\|u\|_{L^{q,\infty}_T(L^p)}$ for $p, q$ satisfying $3/p+2/q=1$. With this estimate in hand, the other necessary estimates can be derived by using maximal regularity estimates for Stokes equations (Lemma \ref{lem.maximal regularity}). For the uniqueness part, we denote $\delta\!\rho\eqdef \rho-\bar{\rho},~ \delta\!u\eqdef u-\bar{u}$.
We estimate $\delta\!\rho$ by using the method developed in \cite{HSWZ2}, which does not require $\nabla u\in L^1_T(L^{\infty})$. We note that this method is also used in the weak-strong uniqueness of Theorem \ref{thm.weak-strong uniqueness}, and the uniqueness part in Theorem \ref{thm.self similar}. For $\delta\!u$, we consider the equation of $\PP(\rho \delta\!u)$, and also use the smallness condition of $\|m_0\|_{\BP}$ and $\|\rho_0-1\|_{L^{\infty}}$ to complete the proof of uniqueness.

To prove the weak-strong uniqueness, we mainly rely on the decomposition of $\BP$ proposed by Barker in \cite{Barker2018}. Specifically, we can decompose $m_0$ into $(m_0)_1$ and $(m_0)_2$, which are all divergence-free and belong to $\BPS(\sigma>0$ small)
and $\dot{B}^s_{2,\infty}(0<s<1/2)$ respectively. We set $m_{0,N}$ as the sum of $(m_0)_1$ and the low-frequency of $(m_0)_2$, and $u_{0,N}$ as the corresponding initial velocity with $\PP(\rho_0u_{0,N})\in \BP\cap\BPS$. Then for the equation \eqref{INS_N} associated with $(\rho_0,u_{0,N})$, we have the critical estimates (uniformly in $N$) presented in Section \ref{Sec.Cannone-Meyer-Planchon}, and the subcritical estimates provided in Section \ref{Sec.weak-strong}.  We denote $\delta\!u^{(N)}\eqdef u-u^{(N)},~\delta\!\rho^{(N)}\eqdef \rho-\rho^{(N)}$. From these, we can conclude that {(see \eqref{minus.u_0}, \eqref{N2T}, \eqref{uniqueness2.Gronwall1}, \eqref{uN} and \eqref{Eq.uniqueness} for more details)}
\begin{align*}
&\|\sqrt\rho\delta\!u^{(N)}\|_{L^\infty(0, T; L^2)}\leq C\|u_0-u_{0,N}\|_{L^2}\exp{\int_0^TC\bigl(\|\nabla {u}^{(N)}\|_{L^{\infty}}+\|t\nabla\dot{u}^{(N)}\|_{L^{\infty}}+\|t\dot{u}^{(N)}\|^2_{L^{\infty}})\,\dt},\\
    &\|u_0-u_{0,N}\|_{L^2}\leq C2^{-Ns},\quad \int_0^T(\|\nabla u^{(N)}\|_{L^{\infty}}+\|t\nabla\dot{u}^{(N)}\|_{L^{\infty}}+\|t\dot{u}^{(N)}\|^2_{L^{\infty}})\,\dt\leq C\varepsilon_0^{\f{p}{\tilde{p}}}(\ln T+N),
\end{align*}
which imply that
\begin{align*}
\|\sqrt\rho\delta\!u^{(N)}\|_{L^{\infty}(0,T;L^2)}+\|\nabla \delta\!u^{(N)}\|_{L^2(0,T;L^2)}\leq CT^{C\varepsilon_0^{\f{p}{\tilde{p}}}} 2^{-N(s-C\varepsilon_0^{\f{p}{\tilde{p}}})}\to 0 \,\text{~as~}\,N\to \infty.
\end{align*}
Therefore, we can achieve that  any weak solution provided by Lions with initial data $(\rho_0,u_0)$ coincides with the limit of $(\rho^{(N)},u^{(N)})$. This implies the weak-strong uniqueness.

For $u_0\in\BB$, we note that the third Besov index is infinity, which prevents us from obtaining time integration over the whole $\R^+$. Consequently, we can only estimate $u$ in the form $\|u\|_{L^q(T_1,T_2;L^p)}$ for any $0<T_1<T_2<\infty$. Our proof relies on a crucial observation (see Lemma \ref{lem.rho u L3}): by analyzing the equation of $u_j$ and applying high-low frequency decomposition, we can derive the critical estimates $\|\sqrt{\rho}u\|_{L^{\infty}(\R^+;L^{3,\infty})}$ and $\|t^{1/4}\nabla u\|_{L^{\infty}(\R^+;L^2)}$. Then other estimates can be obtained by standard argument.
For the uniqueness part, we have already discussed the difficulties associated with $\delta\!\rho$. As for $\delta\!u$, our method relies on the smallness assumption of $u_0$ and an important inequality: $\|\sqrt{\rho}\delta\!u\|_{L^{\infty}(0,t;L^2)}+\|\nabla\delta\!u\|_{L^2(0,t;L^2)}\leq Ct^{1/4}$. This  inequality ensures
\begin{align*}
\sup_{t\in [\delta,T]} \|\sqrt\rho\delta\!u(t)\|^2_{L^2}+\int_{\delta}^T\|\nabla \delta\!u(t)\|^2_{L^2}\,\dt\leq C
\|\sqrt\rho\delta\!u(\delta)\|^2_{L^2} \exp \bigl(C\int_{\delta}^T \gamma(t)\,\dt\bigr),
\end{align*}
where $\gamma(t)=\|\nabla \bar u(t)\|_{L^2}^4+t^{3/2}\|\nabla\dot{\bar u}(t)\|_{L^2}^2$. Using the estimates in Lemma \ref{lem.rho u L3} and Lemma \ref{lem.nabla u L_infty},  we can conclude that
\begin{align*}
\sup_{t\in [\delta,T]}\|\sqrt\rho\delta\!u(t)\|^2_{L^2}+\int_{\delta}^T\|\nabla \delta\!u(t)\|^2_{L^2}\,\ds\leq C\delta^{1/2} (T/\delta)^{C\varepsilon_0^2} \to 0  \,\text{~as~} \,\delta\to 0,    \end{align*}
which implies that $\delta\!u(t)=0$ on $[0,T]$.

\subsection{Notations}

\begin{itemize}
    \item $\R^+{\eqdef}(0, +\infty)$. $D_t\eqdef{(\pa_t+u\cdot \nabla)}$ is the material derivative.
    \item For a Banach space $X$ and an interval $I\subset\R$, we denote by $C(I;X)$ the set of continuous functions on $I$ with values in $X$. For $p\in[1,+\infty]$, the notation $L^p(I; X)$ stands for the collection of measurable functions on $I$ with values in $X$, such that $t\mapsto\|f(t)\|_X$ belongs to $L^p(I)$. For any $T>0$, we abbreviate $L^p((0, T); X)$ to $L^p(0, T; X)$ and sometimes we further abbreviate to $L^p(0, T)$ if there is no confusion. We also write $L_t^qL^p=L^q(0, t; L^p(\R^3))$
    \item $B_R{\eqdef}B(0,R), ~\forall~ R>0$.
    \item For $s\in\R$ and $p\in[1,+\infty]$, we denote by $\dot W^{s,p}(\R^d)$ (or shortly $\dot W^{s,p}$) the standard homogeneous Sobolev spaces, and we also denote $\dot H^s{\eqdef}\dot W^{s,2}$.
    \item The definitions of Besov spaces $\dot{B}^s_{p,r}$ will be recalled in Appendix \ref{Appen_proof}.
    \item Throughout the text, $A\lesssim B$ means that $A\leq CB$, and we shall always denote $C$ to be a positive absolute constant which may vary from line to line. The dependence of the constant $C$ will be explicitly indicated if there are any exceptions.
\end{itemize}

\if0
The rest of the paper is organized as follows: In Section \ref{Sec.Cannone-Meyer-Planchon}, we establish the global well-posedness of \eqref{INS} for $u_0\in \BP$.
\if0
Specifically,
\begin{itemize}
    \item Subsection \ref{Subsec.2.1} provides the a priori estimates that ensure the global existence of the solution stated in Theorem \ref{thm.INS_Cannone-Meyer-Planchon};
\item Subsection \ref{Subsec.2.2} further confirms the global existence of the solution as outlined in Theorem \ref{thm.INS_Cannone-Meyer-Planchon};
\item And Subsection \ref{Subsec.2.3} proves the uniqueness part of Theorem \ref{thm.INS_Cannone-Meyer-Planchon}.
\end{itemize}
\fi
Section \ref{Sec.weak-strong} addresses the proof of weak-strong uniqueness with additional condition $u_0\in L^2$.
\if0
Here
\begin{itemize}
    \item Subsection \ref{Subsec.3.1} presents the decomposition of Besov spaces;
    \item Subsection \ref{Subsec.3.2} offers some subcritical estimates;
    \item And Subsection \ref{Subsec.3.3} demonstrates the uniqueness of Theorem \ref{thm.weak-strong uniqueness}.
\end{itemize}
\fi
In Section \ref{Sec.self similar}, we prove the global well-posedness of \eqref{INS} for initial data $u_0\in \BB$.
\if0
Within this section,
\begin{itemize}
    \item Subsection \ref{Subsec.4.1} provides the a priori estimates leading to the global existence of the solution in Theorem \ref{thm.self similar};
    \item Subsection \ref{Subsec.4.2} confirms the global existence of the solution as described in Theorem \ref{thm.self similar};
    \item And Subsection \ref{Subsec.4.3} establishes the uniqueness part of Theorem \ref{thm.self similar}.
\end{itemize}
\fi
Finally in the Appendix \ref{Appen_proof}, we collect some basic facts on Littlewood-Paley theory and Lorentz spaces.
\fi

The rest of the paper is organized as follows. In Section \ref{Sec.Cannone-Meyer-Planchon}, we prove Theorem \ref{thm.INS_Cannone-Meyer-Planchon}; with the critical estimates in Section \ref{Sec.Cannone-Meyer-Planchon} and the subcritical estimates in Section \ref{Sec.weak-strong}, we prove Theorem \ref{thm.weak-strong uniqueness}; in Section \ref{Sec.self similar}, we prove Theorem \ref{thm.self similar}. Finally in the Appendix \ref{Appen_proof}, we collect some basic facts on Littlewood-Paley theory and Lorentz spaces.

\section{Cannone-Meyer-Planchon solutions of (INS)}\label{Sec.Cannone-Meyer-Planchon}

\subsection{A priori estimates}\label{Subsec.2.1}

In this subsection, we will give some estimates which are needed to prove Theorem \ref{thm.INS_Cannone-Meyer-Planchon}. We first recall O'Neil's convolution inequality.
\begin{lem}[Theorem 2.6, \cite{ONeil}]\label{lem.convolution inequality}
Suppose $1\leq p_1,p_2,q_1,q_2,r\leq \infty$ are such that $r<\infty$ and
\begin{align*}
1+\f1r=\f{1}{p_1}+\f{1}{p_2} \andf \f{1}{q_1}+\f{1}{q_2}\geq \f1s.
\end{align*}
Suppose that $f\in L^{p_1,q_1}(\R^d)$ and $g\in L^{p_2,q_2}(\R^d)$, then it holds that
\begin{align*}%\label{estimates.convolution}
\|f*g\|_{L^{r,s}(\R^d)}\leq 3r \|f\|_{L^{p_1,q_1}(\R^d)}\|g\|_{L^{p_2,q_2}(\R^d)}.
\end{align*}
\end{lem}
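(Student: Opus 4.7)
The plan is to reduce to $f,g\ge 0$ (since $|f*g|\le |f|*|g|$ and the Lorentz quasi-norms depend only on the decreasing rearrangement) and then work throughout with the non-increasing rearrangements $f^{*},g^{*}$ and the Hardy averages $f^{**}(t)\eqdef \f1t\int_0^tf^{*}(s)\,\ds$. The key tool is the pointwise rearrangement inequality
$$
(f*g)^{**}(t)\ \le\ t\,f^{**}(t)\,g^{**}(t)\ +\ \int_t^{\infty}f^{*}(s)\,g^{*}(s)\,\ds,
$$
which I would establish by splitting $f=f_1+f_2$, $g=g_1+g_2$ with $f_1\eqdef f\chi_{\{|f|>f^{*}(t)\}}$ (and analogously for $g$), and then bounding each of the four resulting convolutions by Young's inequality with exponent triples $(1,\infty,\infty)$, $(\infty,1,\infty)$, and $(1,1,1)$: the ``large $\times$ large'' term $f_1*g_1$ contributes the integral tail, while the three cross terms reassemble into $t\,f^{**}(t)\,g^{**}(t)$.

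With the pointwise bound in hand, the proof reduces to estimating $t^{1/r}(f*g)^{**}(t)$ in $L^s(\R^+,\dt/t)$, via the standard equivalence of $\|\cdot\|_{L^{r,s}}$ with this quasi-norm (the hypothesis $r<\infty$ enters precisely here). For the first piece, the scaling identity $1+\f1r=\f1{p_1}+\f1{p_2}$ lets us factor $t\cdot t^{1/r}=t^{1/p_1}\cdot t^{1/p_2}$, after which Hölder's inequality in $L^s(\dt/t)$ with exponents $q_1,q_2$ --- admissible because $\f1{q_1}+\f1{q_2}\ge \f1s$ --- gives
$$
\bigl\|t^{1/p_1}f^{**}\bigr\|_{L^{q_1}(\dt/t)}\bigl\|t^{1/p_2}g^{**}\bigr\|_{L^{q_2}(\dt/t)}\ \lesssim\ \|f\|_{L^{p_1,q_1}}\,\|g\|_{L^{p_2,q_2}},
$$
where the last step uses the usual Hardy inequality relating $f^{**}$ and $f^{*}$.

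For the tail term $\int_t^{\infty}f^{*}(s)g^{*}(s)\,\ds$, I would apply Fubini to exchange the order of integration (or equivalently decompose dyadically as $[t,\infty)=\bigsqcup_{k\ge 0}[2^k t,2^{k+1}t]$), reducing its $L^s(\dt/t)$ norm to a sum of Hölder products of $f^{*}$ and $g^{*}$ on dyadic scales, each of which is again controlled by $\|f\|_{L^{p_1,q_1}}\|g\|_{L^{p_2,q_2}}$. The main obstacle is arranging this tail estimate so that the full strength of the condition $\f1{q_1}+\f1{q_2}\ge\f1s$ (rather than only the equality case) is used exactly once, and so that the resulting numerical constant can be tracked down to the sharp factor $3r$ in the statement; this bookkeeping is the only nontrivial part of the argument, the rest being a mechanical combination of rearrangement and Hölder.
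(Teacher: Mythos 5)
The paper does not prove this lemma: it is cited verbatim as Theorem~2.6 of O'Neil's 1963 paper, so there is no in-text proof to compare against. Your proposal is therefore an independent reconstruction, and the strategy you choose --- reduce to the pointwise rearrangement bound
\begin{equation*}
(f*g)^{**}(t)\ \le\ t\,f^{**}(t)\,g^{**}(t)\ +\ \int_t^{\infty}f^{*}(s)\,g^{*}(s)\,\ds,
\end{equation*}
then run H\"older in $L^s(\R^+,\dt/t)$ using $1+\f1r=\f1{p_1}+\f1{p_2}$ to split $t^{1+1/r}=t^{1/p_1}t^{1/p_2}$, together with Hardy's inequality to pass between $f^{**}$ and $f^{*}$ --- is indeed the standard (and essentially O'Neil's own) route. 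So the architecture is correct.

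There is, however, a sign error in your narration of the key pointwise lemma, and it is worth flagging because it would mislead a reader who tries to fill in the details. With your split $f_1=f\chi_{\{|f|>f^{*}(t)\}}$, $f_2=f-f_1$, the piece that produces the tail $\int_t^\infty f^{*}g^{*}$ is the \emph{small}$\times$\emph{small} term $f_2*g_2$, not the large$\times$large term $f_1*g_1$. Concretely, $\|f_2*g_2\|_{L^\infty}\le\int_0^\infty f_2^{*}(s)g_2^{*}(s)\,\ds$, and since $\|f_2\|_{L^\infty}\le f^{*}(t)$ and $f_2^{*}\le f^{*}$ one gets $\int_0^\infty f_2^{*}g_2^{*}\le t\,f^{*}(t)g^{*}(t)+\int_t^\infty f^{*}g^{*}$. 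By contrast, $f_1$ and $g_1$ are supported on sets of measure at most $t$, so $\|f_1*g_1\|_{L^1}\le\|f_1\|_{L^1}\|g_1\|_{L^1}\le t^2 f^{**}(t)g^{**}(t)$, and this term contributes to the product part via $(f_1*g_1)^{**}(t)\le t^{-1}\|f_1*g_1\|_{L^1}\le t\,f^{**}(t)g^{**}(t)$; the two mixed terms are handled by $\|f_1\|_{L^1}\|g_2\|_{L^\infty}$ and $\|f_2\|_{L^\infty}\|g_1\|_{L^1}$ and also land in the product part. Two further points that deserve one line each in a full write-up: (i) passing from $1/q_1+1/q_2\ge 1/s$ to a clean H\"older factorisation uses the nesting $L^{r,s'}\hookrightarrow L^{r,s}$ for $s'\le s$, not raw H\"older; and (ii) O'Neil's constant $3r$ is attached to the $f^{**}$-based norm, so translating to the $f^{*}$-based quasi-norm of Definition~\ref{defLorentz} requires Hardy's inequality and hence $r>1$ (which holds in every application in the paper). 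None of this is a structural gap, but the $f_1*g_1$ versus $f_2*g_2$ attribution should be corrected.
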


By Lemma \ref{lem.convolution inequality}, we can easily get the following estimates.

\begin{cor}\label{cor.convolution}
Let $f(t,x):[0,T)\times \R^3\to \R^3$ be a smooth function and define $Af(t,x){\eqdef}\int_0^t e^{(t-s)\Delta}$ $f(s,x)\,\ds$. Then for any $p\in (3,\infty),~q\in (1,\infty)$ satisfying $\f3p+\f2q=1$, there exists a constant $C$ independent of $T$ such that
\begin{align}\label{estimate.nabla Af}
    \|\nabla Af\|_{\LQQ(0,T;L^p)}\leq C \|f\|_{L^{\f q2,\infty}(0,T;L^{\f p2})} \andf \|\Delta Af\|_{\LQQ(0,T;L^p)}\leq C \|f\|_{L^{q,\infty}(0,T;L^{p})}.
\end{align}
Moreover, for $p\in(3,6)$ and $r\in(1,\infty)$ which satisfies $\f3p+\f2q=1,~\f1r=\f2q+\f12$, there also holds
\begin{align}\label{estimate.Af}
    \|Af\|_{\LQQ(0,T;L^p)}\leq C \|f\|_{L^{r,\infty}(0,T;L^{\f p2})}.
\end{align}
\end{cor}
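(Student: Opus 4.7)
The plan is to reduce each of the three estimates in \eqref{estimate.nabla Af}--\eqref{estimate.Af} to a scalar convolution in time against a power-kernel $t^{-\alpha}$, and then to invoke O'Neil's inequality (Lemma \ref{lem.convolution inequality}). The motivation is that $t^{-\alpha}\mathbf{1}_{t>0}$ lies in the Lorentz space $L^{1/\alpha,\infty}(\R^+)$ with norm independent of $T$, so the time integration becomes a one-dimensional weak-type convolution to which Lemma \ref{lem.convolution inequality} is perfectly adapted.

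First I would recall the standard heat kernel decay in $\R^3$,
\begin{align*}
\|\nabla^k e^{t\Delta} g\|_{L^p(\R^3)} \leq C\, t^{-\f{k}{2}-\f{3}{2}\bigl(\f{1}{p'}-\f{1}{p}\bigr)}\|g\|_{L^{p'}(\R^3)},\qquad 1\leq p'\leq p\leq \infty,\ k\in\{0,1,2\},
\end{align*}
and insert it into the Duhamel representation of $Af$. Taking $p'=p/2$ for the estimates involving $Af$ and $\nabla Af$, and $p'=p$ for $\Delta Af$, yields the pointwise-in-$t$ bounds
\begin{align*}
\|\nabla Af(t)\|_{L^p} &\leq C \int_0^t (t-s)^{-\f{1}{2}-\f{3}{2p}}\|f(s)\|_{L^{p/2}}\, \ds,\\
\|\Delta Af(t)\|_{L^p} &\leq C \int_0^t (t-s)^{-1}\|f(s)\|_{L^p}\, \ds,\\
\|Af(t)\|_{L^p} &\leq C \int_0^t (t-s)^{-\f{3}{2p}}\|f(s)\|_{L^{p/2}}\, \ds.
\end{align*}
Each right-hand side is a time convolution of the scalar function $s\mapsto\|f(s)\|_{L^{p/2}}$ (respectively $L^p$) with a kernel of the form $t^{-\alpha}\mathbf{1}_{t>0}\in L^{1/\alpha,\infty}(\R^+)$.

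Next I would apply Lemma \ref{lem.convolution inequality} to each of these time convolutions with $p_1=1/\alpha$, $q_1=q_2=s=\infty$. The Young-type relation $1+\tfrac{1}{r}=\tfrac{1}{p_1}+\tfrac{1}{p_2}$ becomes $\alpha+\tfrac{1}{p_2}=1+\tfrac{1}{r}$, and using the identity $\tfrac{1}{q}=\tfrac{1}{2}-\tfrac{3}{2p}$ coming from $\tfrac{3}{p}+\tfrac{2}{q}=1$, a direct arithmetic check shows that the target time-exponent is precisely $q$ in each case: for $\nabla Af$ one has $\alpha=\tfrac{1}{2}+\tfrac{3}{2p}$ and $p_2=q/2$; for $\Delta Af$ one has $\alpha=1$ and $p_2=q$; for $Af$ one has $\alpha=\tfrac{3}{2p}$ and $p_2=r$ determined by $\tfrac{1}{r}=\tfrac{1}{2}+\tfrac{2}{q}$, which matches the prescribed relation. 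In each case the condition $\tfrac{1}{q_1}+\tfrac{1}{q_2}\geq\tfrac{1}{s}$ is trivially satisfied.

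The main technical point, and essentially the only non-trivial obstacle, is verifying the admissibility hypothesis $r\in(1,\infty)$ of Lemma \ref{lem.convolution inequality} for the last estimate. The relation $\tfrac{1}{r}=\tfrac{1}{2}+\tfrac{2}{q}$ together with $\tfrac{3}{p}+\tfrac{2}{q}=1$ shows that $r>1$ is equivalent to $q>4$, which in turn is equivalent to $p<6$. This is exactly the restriction $p\in(3,6)$ appearing in \eqref{estimate.Af}, and it confirms that the range of $p$ is sharp with respect to the convolution argument. The other two bounds require only $p\in(3,\infty)$, since $p_1=1/\alpha$ and $p_2$ are always at least $1$ in those cases.
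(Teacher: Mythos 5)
Your reduction to a time convolution against a power kernel, followed by O'Neil's inequality, is exactly the paper's approach for the first inequality of \eqref{estimate.nabla Af} and for \eqref{estimate.Af}, and your arithmetic for the exponents (including the observation that $r>1$ forces $p<6$) is correct. However, there is a genuine gap in your treatment of the $\Delta Af$ estimate.

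For that bound you take $p'=p$ in the heat kernel smoothing estimate, which gives $\alpha=1$, i.e.\ the kernel $(t-s)^{-1}$. You then propose to apply Lemma \ref{lem.convolution inequality} with $p_1=1/\alpha=1$ and $q_1=\infty$. But O'Neil's inequality fails at this endpoint: the assertion $L^{1,\infty}*L^{q,\infty}\subset L^{q,\infty}$ is false, and indeed the convolution need not even be finite --- for example $\int_0^1 |x-y|^{-1}|y|^{-1/q}\,\mathrm{d}y\equiv +\infty$ on $\R$. Concretely, the pointwise-in-$t$ bound $\|\Delta Af(t)\|_{L^p}\leq C\int_0^t(t-s)^{-1}\|f(s)\|_{L^p}\,\mathrm{d}s$ is formally correct but useless, since the right side diverges logarithmically whenever $\|f(\cdot)\|_{L^p}$ is bounded near $s=t$. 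This is the critical case where the size of the kernel alone is insufficient and one needs cancellation; in other words, $f\mapsto\Delta Af$ is a singular integral operator in time. The paper therefore does \emph{not} derive this estimate by a Young/O'Neil argument, but instead invokes the Lorentz-space maximal regularity estimate for the heat (or Stokes) equation, Proposition 2.1 of \cite{DMT} (a Calder\'on--Zygmund type result), which is a genuinely stronger input than Lemma \ref{lem.convolution inequality}. Your proof would be correct if you replaced the $\Delta Af$ step by a citation of that maximal regularity result.
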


\begin{proof}
The second inequality of \eqref{estimate.nabla Af} follows from [Proposition 2.1, \cite{DMT}].  As we can write
    \begin{align*}
     e^{(t-s)\Delta} f(s,x)=\f{1}{(4\pi (t-s))^{3/2}}\int_{\R^3}e^{-\f{|x-y|^2}{4(t-s)}} f(s,y)\,\dy,
    \end{align*}
by Young's inequality we have
    \begin{align*}
     \|\nabla e^{(t-s)\Delta} f\|_{L^p}\leq C|t-s|^{-\f3{2p}-\f12}\|f(s)\|_{L^{\f p2}},\quad  \| e^{(t-s)\Delta} f\|_{L^p}\leq C|t-s|^{-\f3{2p}}\|f(s)\|_{L^{\f p2}}.
      \end{align*}
  Then the first inequality of \eqref{estimate.nabla Af} and \eqref{estimate.Af} can be easily derived by adapting Lemma \ref{lem.convolution inequality}.
\end{proof}

\begin{prop}\label{prop.u_L^q,infty(L^p)}
Let $p\in (3,\infty),~q\in (1,\infty)$ satisfying $\f3p+\f2q=1$.
Let $(\rho,u)$ be a smooth solution of \eqref{INS} on $[0,T]\times \R^3$. There exists a constant $\varepsilon_0\in(0,1)$
such that if
\begin{align}\label{assumption.rho(t)}
    {\dis\sup_{t\in [0,T]}}
    \|{\rho(t)}-1\|_{L^{\infty}}+\|m_0\|_{\BP}< \varepsilon_0,
\end{align}
then there holds
\begin{align}\label{estimate.u_L^q,infty(L^p)}
    \|u\|_{\LQQ(0,T;L^p)}\leq C\|m_0\|_{\BP},
\end{align}
where $C$ is independent of $T$.
\end{prop}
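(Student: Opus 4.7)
Since the initial data appears only through $\mathbb{P}(\rho_0u_0) = m_0 \in \BP$, the natural primary unknown is not $u$ but $v \eqdef \mathbb{P}(\rho u)$. Rewriting the momentum equation in conservative form,
\beno
\pa_t(\rho u) + \dive(\rho u\otimes u) = \Delta u - \nabla P,
\eeno
(using $\rho_t + \dive(\rho u) = 0$ and $\dive u = 0$), applying $\mathbb{P}$, using $\mathbb{P}\Delta u = \Delta u$, and decomposing $u = v - \mathbb{P}((\rho-1)u)$, we arrive at
\beno
(\pa_t - \Delta)v = -\mathbb{P}\,\dive(\rho u\otimes u) - \Delta\mathbb{P}((\rho-1)u), \qquad v(0) = m_0.
\eeno
Duhamel's formula together with the identity $u = v - \mathbb{P}((\rho-1)u)$ then expresses $u$ as a sum of four pieces: the linear heat evolution of $m_0$, a nonlinear convection integral, a density-correction integral with $\Delta$ inside, and a pointwise density-correction term.

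Each piece is then estimated in $L^{q,\infty}(0,T;L^p)$. The linear contribution uses the standard characterization $\|e^{t\Delta}m_0\|_{L^p} \lesssim t^{-1/q}\|m_0\|_{\BP}$, valid because $-1+\f3p = -\f2q$ by the scaling constraint $\f3p+\f2q=1$, together with $t^{-1/q} \in L^{q,\infty}(\R^+)$. The convection integral is controlled by the first inequality of \eqref{estimate.nabla Af} in Corollary \ref{cor.convolution}: since $\mathbb{P}\,\dive$ factors as $\nabla\cdot$ acting on a tensor and $\mathbb{P}$ is $L^p$-bounded for $1<p<\infty$, one obtains a bound by $\|\rho u\otimes u\|_{L^{q/2,\infty}(0,T;L^{p/2})}$; a spatial H\"older and the Lorentz-space H\"older $L^{q,\infty}\cdot L^{q,\infty}\hookrightarrow L^{q/2,\infty}$ in time then dominate this by $(1+\varepsilon_0)X(T)^2$, where $X(T) \eqdef \|u\|_{L^{q,\infty}(0,T;L^p)}$. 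The density-correction integral is bounded by the second inequality of \eqref{estimate.nabla Af} directly, yielding $\lesssim \|(\rho-1)u\|_{L^{q,\infty}(0,T;L^p)} \lesssim \varepsilon_0 X(T)$. Finally, the pointwise term satisfies $\|\mathbb{P}((\rho-1)u)\|_{L^p} \lesssim \varepsilon_0\|u\|_{L^p}$, giving again $\lesssim \varepsilon_0 X(T)$.

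Combining the four estimates yields a quadratic inequality of the form $X(T) \leq C\|m_0\|_{\BP} + C\varepsilon_0 X(T) + CX(T)^2$. Since the smooth solution makes $X(\cdot)$ continuous and nondecreasing with $X(0)=0$, a standard bootstrap argument, for $\varepsilon_0$ small enough, gives $X(T) \leq 2C\|m_0\|_{\BP}$ uniformly in $T$, which is exactly \eqref{estimate.u_L^q,infty(L^p)}. The main obstacle is the delicate handling of the density perturbation: both the $\Delta$-maximal-regularity term and the pointwise term lose too many derivatives to absorb unless the smallness of $\|\rho-1\|_{L^\infty}$ is carefully exploited, and the passage through $v = \mathbb{P}(\rho u)$ is crucial in order to avoid invoking product laws in $\BP$, which are unavailable for $p>3$.
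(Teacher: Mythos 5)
Your proposal matches the paper's argument in all essentials: you introduce the same auxiliary unknown $v=\mathbb P(\rho u)$, derive the same heat equation (the paper's source term $\Delta(u-v)$ is exactly your $-\Delta\mathbb P((\rho-1)u)$ since $u-v=-\mathbb P((\rho-1)u)$), estimate the three Duhamel pieces with the same tools (the characterization of $\BP$ via the heat semigroup, the two inequalities of Corollary \ref{cor.convolution}, Lorentz--H\"older), and close with the same quadratic bootstrap. The only cosmetic difference is that the paper tracks $\|v\|_{L^{q,\infty}(L^p)}$ and recovers $\|u\|$ afterwards through the pointwise comparison \eqref{estimate.u_v control}, whereas you track $X(T)=\|u\|_{L^{q,\infty}(L^p)}$ directly and account for the extra pointwise term $-\mathbb P((\rho-1)u)$; these are equivalent bookkeeping choices.
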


\begin{proof}
Let $v\eqdef \PP(\rho u)$ with $v_0= m_0$.
By \eqref{INS}, we know that $v$ satisfies
\begin{equation*}%\label{eq.v}
     \left\{
     \begin{array}{l}
     \partial_tv-\Delta v=\Delta(u-v)-\PP\dive(\rho u\otimes u), \quad (t,x)\in \R^{+}\times\R^3\\
     \dive v=0,\\
     v|_{t=0}=v_0.
     \end{array}
     \right.
\end{equation*}
Then we can write
\begin{align}\label{eq.v_mild solution}
    v(t,x)=e^{t\Delta}v_0+\int_0^t e^{(t-s)\Delta}\Bigl(\Delta(u-v)-\PP\dive(\rho u\otimes u)\Bigr)(s,x)\,\ds.
\end{align}

Firstly, according to the equivalent definition of Besov space with negative indices provided in Definition \ref{defBesov}, we can deduce that
\begin{align}\label{estimate.heat v_0}
\|e^{t\Delta}v_0\|_{L^{q,\infty}(0,T;L^p)}\leq C \|v_0\|_{\BP}.
\end{align}
As $\PP$ is a bounded linear operator on $L^s$ for all $s\in (1,\infty)$, we have
\begin{align*}
    \|u(t)\|_{L^p}\leq \|u(t)-v(t)\|_{L^p}+\|v(t)\|_{L^p}\leq C\|\rho(t)-1\|_{L^{\infty}}\|u(t)\|_{L^p}+\|v(t)\|_{L^p},
\end{align*}
which along with \eqref{assumption.rho(t)}  gives
\begin{align}\label{estimate.u_v control}
  \|u(t)\|_{L^p}\leq C\|v(t)\|_{L^p}, \quad \|u(t)-v(t)\|_{L^p}\leq C\varepsilon_0 \|v(t)\|_{L^p}.
\end{align}
Hence, by \eqref{estimate.nabla Af}, \eqref{eq.v_mild solution}, \eqref{estimate.heat v_0} and \eqref{estimate.u_v control}, we obtain
\begin{align*}
    \|v\|_{L^{q,\infty}(0,T;L^p)}
    &\lesssim \|e^{t\Delta}v_0\|_{L^{q,\infty}(0,T;L^p)}+
    \|u-v\|_{L^{q,\infty}(0,T;L^p)}+\|\rho u\otimes u\|_{L^{\f q2,\infty}(0,T;L^{\f p2})}\\
    &\leq C \|v_0\|_{\BP}+C\varepsilon_0\|v\|_{L^{q,\infty}(0,T;L^p)}+C\|\rho\|_{L^{\infty}(0,T;L^{\infty})}\|v\|^2_{L^{q,\infty}(0,T;L^p)}.
\end{align*}
This means that
\begin{align}\label{v1}
 \|v\|_{L^{q,\infty}(0,T;L^p)}\leq C (\|v_0\|_{\BP}+\|v\|^2_{L^{q,\infty}(0,T;L^p)}).
\end{align}
%As $\PP$ is a Fourier multiplier of degree $0$, it follows that
It follows from \eqref{assumption.rho(t)} that
\begin{align*}
 \|v_0\|_{\BP}=\|m_0\|_{\BP}<\varepsilon_0.
\end{align*}
As a result, we deduce from \eqref{estimate.u_v control}, \eqref{v1} and a continuity argument that
\begin{align*}
 \|u\|_{L^{q,\infty}(0,T;L^p)}\leq C \|v\|_{L^{q,\infty}(0,T;L^p)}\leq C\|m_0\|_{\BP}.
\end{align*}

This completes the proof of Proposition \ref{prop.u_L^q,infty(L^p)}.
\end{proof}

\begin{rmk}
   For any $p_1 \in[p,\infty]$ and $q_1\in (1,\infty)$ satisfying $\f{3}{p_1}+\f{2}{q_1}=1$, by the embedding $\BP(\R^3)\hookrightarrow \dot{B}_{p_1,\infty}^{-1+\f{3}{p_1}}(\R^3)$,  we also have
  \begin{align}\label{estimate.u_L^tilde q,infty(L^tilde p)}
    \|u\|_{L^{q_1,\infty}(0,T;L^{p_1})}\leq C\|m_0\|_{\dot{B}_{p_1,\infty}^{-1+\f{3}{p_1}}}\leq C\|m_0\|_{\BP}.
\end{align}
\end{rmk}

\begin{prop}\label{prop.tu_t L^q,infty(L^p)}
 Under the assumptions of Proposition \ref{prop.u_L^q,infty(L^p)}, we have
 \begin{align}\label{estimate.tu}
    \|tu\|_{L^{\infty}(0,T;\dot{B}_{p,\infty}^{1+\f3p})}+\bigl\|\left((tu)_t, tu_t, t\dot{u}, t\nabla^2u, t\nabla P\right)\bigr\|_{L^{q,\infty}(0,T;L^p)}\leq C\|m_0\|_{\BP}.
 \end{align}
Moreover, there holds
\begin{align}\label{decay.u_L^p}
t^{\f1q}\|u(t)\|_{L^p}+t^{\f12}\|u(t)\|_{L^{\infty}}+t^{\f1q+\f12}\|\nabla u(t)\|_{L^p}\leq C\|m_0\|_{\BP}.
\end{align}
\end{prop}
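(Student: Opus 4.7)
The plan is to view $tu$ as the solution of a perturbed Stokes system with zero initial datum and apply the maximal regularity of Stokes (Lemma \ref{lem.maximal regularity}), extended to the Lorentz framework $L^{q,\infty}(0,T;L^p)$ by real interpolation, closing the estimate with Proposition \ref{prop.u_L^q,infty(L^p)} and the smallness \eqref{assumption.rho(t)}.

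First, I would rewrite the momentum equation in \eqref{INS} as $u_t-\Delta u+\nabla P=(1-\rho)u_t-\rho u\cdot\nabla u$, multiply by $t$, and use $(tu)_t=u+tu_t$ to derive the Stokes-type system
$$(tu)_t-\Delta(tu)+\nabla(tP)=\rho u-(\rho-1)(tu)_t-t\rho u\cdot\nabla u,\quad \dive(tu)=0,\quad (tu)|_{t=0}=0.$$
Maximal regularity then yields
$$\|((tu)_t,\, t\nabla^2 u,\, t\nabla P)\|_{L^{q,\infty}(0,T;L^p)}\leq C\|\rho u\|_{L^{q,\infty}(L^p)}+C\varepsilon_0\|(tu)_t\|_{L^{q,\infty}(L^p)}+C\|t\rho u\cdot\nabla u\|_{L^{q,\infty}(L^p)}.$$
The first term is bounded by $C\|m_0\|_{\BP}$ via Proposition \ref{prop.u_L^q,infty(L^p)}, and the second is absorbed by the left-hand side thanks to \eqref{assumption.rho(t)}.

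The main obstacle is the convective term $\|t\rho u\cdot\nabla u\|_{L^{q,\infty}(L^p)}$. My approach is to split $\|u\cdot\nabla u\|_{L^p}\leq \|u\|_{L^{p_1}}\|\nabla u\|_{L^{p_2}}$ for suitable $p_1,p_2$ with $1/p_1+1/p_2=1/p$, control $\|u\|_{L^{q_1,\infty}(L^{p_1})}$ by the refined estimate \eqref{estimate.u_L^tilde q,infty(L^tilde p)}, and treat $\|\nabla u\|_{L^{p_2}}$ by Gagliardo--Nirenberg interpolation between $\|u\|_{L^{p_1}}$ (again from \eqref{estimate.u_L^tilde q,infty(L^tilde p)}) and $\|\nabla^2 u\|_{L^p}$; combined with an O'Neil convolution inequality in time to match the $L^{q,\infty}$ scale, this gives
$$\|t\rho u\cdot\nabla u\|_{L^{q,\infty}(L^p)}\leq C\|m_0\|_{\BP}\bigl(\|m_0\|_{\BP}+\|t\nabla^2 u\|_{L^{q,\infty}(L^p)}\bigr),$$
and the second factor is reabsorbed by the smallness assumption, closing the bound for the quantities $(tu)_t$, $t\nabla^2 u$, $t\nabla P$ in \eqref{estimate.tu}.

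The remaining assertions follow quickly. From $tu_t=(tu)_t-u$ and $t\dot u=tu_t+tu\cdot\nabla u$ one obtains the $tu_t$- and $t\dot u$-bounds (the quadratic term being handled as above). For $\|tu\|_{L^\infty(\dot B^{1+3/p}_{p,\infty})}$, I would use Duhamel's formula for the Stokes system together with the heat-kernel mapping $L^{q,\infty}(L^p)\to L^\infty(\dot B^{2-2/q}_{p,\infty})$ applied to the same right-hand side, noting that $2-2/q=1+3/p$ since $3/p+2/q=1$. Finally, the pointwise decay \eqref{decay.u_L^p} follows by combining the heat-semigroup decay for the initial datum in $\BP$ with Duhamel estimates for the nonlinear contributions; equivalently it can be derived by interpolating the pointwise bounds $\|u(t)\|_{\BP}\lesssim \|m_0\|_{\BP}$ and $\|tu(t)\|_{\dot B^{1+3/p}_{p,\infty}}\lesssim \|m_0\|_{\BP}$ via standard Besov embeddings, which also delivers the $L^\infty$ and $\nabla u$ rates in \eqref{decay.u_L^p}.
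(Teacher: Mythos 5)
Your overall skeleton — rewrite the equation for $tu$ as a forced Stokes system, apply Lemma \ref{lem.maximal regularity}, absorb $(1-\rho)(tu)_t$ by smallness of $\|\rho-1\|_{L^\infty}$, estimate the convective term, then deduce the decay \eqref{decay.u_L^p} — matches the paper. But your treatment of the convective term has a genuine gap.

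You propose $\|u\cdot\nabla u\|_{L^p}\le\|u\|_{L^{p_1}}\|\nabla u\|_{L^{p_2}}$ and Gagliardo--Nirenberg $\|\nabla u\|_{L^{p_2}}\lesssim\|u\|_{L^{p_1}}^{1-\theta}\|\nabla^2u\|_{L^p}^{\theta}$, which yields
$t\|u\cdot\nabla u\|_{L^p}\lesssim\|u\|_{L^{p_1}}^{2-\theta}\,t^{1-\theta}\,(t\|\nabla^2u\|_{L^p})^{\theta}$
with $\theta\in(0,1)$. To land in $L^{q,\infty}(0,T)$ uniformly in $T$ you would need $\|u\|_{L^{p_1}}^{2-\theta}t^{1-\theta}\in L^{q/(1-\theta),\infty}(0,T)$ with a $T$-uniform bound. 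At this stage of the argument the only available information is the weak-Lorentz bound $\|u\|_{L^{q_1,\infty}(0,T;L^{p_1})}\lesssim\|m_0\|_{\BP}$ from \eqref{estimate.u_L^tilde q,infty(L^tilde p)}, \emph{not} the pointwise decay $\|u(t)\|_{L^{p_1}}\lesssim t^{-1/q_1}$, and multiplication by the increasing factor $t^{1-\theta}$ does not act boundedly on weak Lorentz spaces: from $h\in L^{c',\infty}(0,\infty)$ one cannot conclude $t^{a}h\in L^{c,\infty}(0,\infty)$ for $a>0$ (take $h=\sum_{n\ge1}n^{-1/c'}\chi_{(2^n,2^n+1)}$, which lies in $L^{c',\infty}$ while $t^ah$ is unbounded and not in any $L^{c,\infty}$). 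Thus you get at best a $T^{1-\theta}$-dependent bound. The only way to recover a uniform estimate is to invoke the pointwise decay \eqref{decay.u_L^p} — but that is the \emph{second} assertion of the proposition, proved only after \eqref{estimate.tu} is established, so this would be circular. Also, ``O'Neil's convolution inequality'' is not the right tool here: there is no convolution in this step; what is needed is H\"older's inequality in Lorentz spaces, and as just explained even that does not close.

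The paper avoids all of this by interpolating against $\|u\|_{\dot B^{1+3/p}_{p,\infty}}$ rather than $\|\nabla^2u\|_{L^p}$: the inequalities
\begin{align*}
\|u\|_{L^\infty}\lesssim\|u\|_{L^p}^{\frac{p}{p+3}}\|u\|_{\dot B^{1+\frac3p}_{p,\infty}}^{\frac{3}{p+3}},\qquad
\|\nabla u\|_{L^p}\lesssim\|u\|_{L^p}^{\frac{3}{p+3}}\|u\|_{\dot B^{1+\frac3p}_{p,\infty}}^{\frac{p}{p+3}}
\end{align*}
multiply to give $\|u\cdot\nabla u\|_{L^p}\lesssim\|u\|_{L^p}\|u\|_{\dot B^{1+3/p}_{p,\infty}}$, hence
$\|tu\cdot\nabla u\|_{L^{q,\infty}(L^p)}\lesssim\|u\|_{L^{q,\infty}(L^p)}\,\|tu\|_{L^\infty(\dot B^{1+3/p}_{p,\infty})}$.
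The entire $t$-weight sits on the Besov factor, which the maximal-regularity estimate controls precisely in $L^\infty_t$, while the unweighted $L^p$ factor is in $L^{q,\infty}_t$; the pairing $L^{q,\infty}\times L^\infty\to L^{q,\infty}$ is exact, with no stray power of $t$, and $\|u\|_{L^{q,\infty}(L^p)}$ is small enough to absorb. This is precisely the ingredient your decomposition is missing. Similarly, for \eqref{decay.u_L^p}, your interpolation route presupposes the pointwise bound $\|u(t)\|_{\BP}\lesssim\|m_0\|_{\BP}$, which has not been established; the paper instead writes $tu(t)=\int_0^t(su)_s\,ds$ and applies H\"older in Lorentz time to $(tu)_t\in L^{q,\infty}(L^p)$, which is self-contained.
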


\begin{proof}
    Multiplying both sides of \eqref{INS} by time $t$ yields
    \begin{align*}
        (tu)_t-\Delta(tu)+\nabla(tP)=(1-\rho)(tu)_t+\rho u-t\rho u\cdot\nabla u.
    \end{align*}
    Then by taking advantage of Lemma \ref{lem.maximal regularity}, we get
    \begin{align*}
    &\|tu\|_{L^{\infty}(0,T;\dot{B}_{p,\infty}^{1+\f3p})}+\|((tu)_t, t\nabla^2u, t\nabla P)\|_{L^{q,\infty}(0,T;L^p)}\\
    & \lesssim \|(1-\rho)(tu)_t\|_{L^{q,\infty}(0,T;L^p)}+\|\rho u\|_{L^{q,\infty}(0,T;L^p)}+\|t\rho u\cdot\nabla u\|_{L^{q,\infty}(0,T;L^p)}\\
    & \lesssim \|\rho-1\|_{L^{\infty}(0,T;L^{\infty})}\|(tu)_t\|_{L^{q,\infty}(0,T;L^p)}+\|\rho\|_{L^{\infty}(0,T;L^{\infty})} \bigl(\|u\|_{L^{q,\infty}(0,T;L^p)}+\|t u\cdot\nabla u\|_{L^{q,\infty}(0,T;L^p)}\bigr).
    \end{align*}
    Thanks to \eqref{assumption.rho(t)}, the first term of the right hand side can be absorbed by the left hand side. And by the interpolation inequality, we have
    \begin{align}\label{estimate.Lp interpolation}
        \|u\|_{L^{\infty}}\lesssim \|u\|^{\f{p}{p+3}}_{L^p}\|u\|^{\f{3}{p+3}}_{\dot{B}_{p,\infty}^{1+\f3p}} \andf \|\nabla u\|_{L^p}\lesssim \|u\|^{\f{3}{p+3}}_{L^p}\|u\|^{\f{p}{p+3}}_{\dot{B}_{p,\infty}^{1+\f3p}},
    \end{align}
    which implies
    \begin{align*}
     \|t u\cdot\nabla u\|_{L^{q,\infty}(0,T;L^p)}\lesssim  \|u\|_{L^{q,\infty}(0,T;L^p)}\|tu\|_{L^{\infty}(0,T;\dot{B}_{p,\infty}^{1+\f3p})}.
    \end{align*}
    So, we get
    \begin{align*}
    \|tu\|_{L^{\infty}(0,T;\dot{B}_{p,\infty}^{1+\f3p})}+\|((tu)_t, t\nabla^2u, t\nabla P)\|_{L^{q,\infty}(0,T;L^p)}\leq C\|u\|_{L^{q,\infty}(0,T;L^p)}\bigl(1+\|tu\|_{L^{\infty}(0,T;\dot{B}_{p,\infty}^{1+\f3p})}\bigr).
    \end{align*}
    By Proposition \ref{prop.u_L^q,infty(L^p)}, we know that $\|u\|_{L^{q,\infty}(0,T;L^p)}$ is small, from which, we infer
    \begin{align*}
    &\|tu\|_{L^{\infty}(0,T;\dot{B}_{p,\infty}^{1+\f3p})}+\|((tu)_t, t\nabla^2u, t\nabla P)\|_{L^{q,\infty}(0,T;L^p)} \leq C\|m_0\|_{\BP}.
    \end{align*}
    As $t{u}_t=(tu)_t-u$, $t\dot{u}=(tu)_t-u+tu\cdot\nabla u$, we  get,  by using \eqref{estimate.Lp interpolation},  that
        \begin{align*}
        \|tu_t, t\dot{u}\|_{L^{q,\infty}(0,T;L^p)}{\lesssim }\|(tu)_t\|_{L^{q,\infty}(0,T;L^p)}+ \|u\|_{L^{q,\infty}(0,T;L^p)}+\|tu\|_{L^{\infty}(0,T;\dot{B}_{p,\infty}^{1+\f3p})}\|u\|_{L^{q,\infty}(0,T;L^p)},
    \end{align*}
which leads to  \eqref{estimate.tu}.

 Notice that
 \begin{align*}
     tu(t)=\int_0^t (su)_s\,\ds,
 \end{align*}
 we get by H\"older inequality of Lorentz space that
 \begin{align*}
     t\|u(t)\|_{L^p}\leq \int_0^t \|(tu)_t\|_{L^p}\,\ds\leq \|1\|_{L^{q',1}(0,t)}\|(tu)_t\|_{\LQQ(0,t;L^p)}\leq q't^{1-\f1q}\|(tu)_t\|_{\LQQ(0,t;L^p)},
 \end{align*}
 where $\f1q+\f{1}{q'}=1$. This implies $t^{\f1q}\|u(t)\|_{L^p}
 \leq C\|m_0\|_{\BP}$.
 By \eqref{estimate.Lp interpolation} and \eqref{estimate.tu}, we get
 \begin{align*}
 \|u\|_{L^{\infty}}\leq Ct^{-\f12}{\|m_0\|_{\BP}} \andf
 \|\nabla u\|_{L^p}\leq Ct^{-\f1q-\f12}{\|m_0\|_{\BP}}.
 \end{align*}
 We thus obtain \eqref{decay.u_L^p}.
 \end{proof}

\begin{prop}\label{prop.t^{1+1/2q}u L^2q,infty(L^p)}
 Under the assumptions of Proposition \ref{prop.u_L^q,infty(L^p)}, we have
 \begin{align}\label{estimate.t^1+1/2q u}
    \|t^{1+\f{1}{2q}}u\|_{L^{\infty}(0,T;\dot{B}_{p,\infty}^{1+\f3p+\f1q})}+\bigl\|\bigl((t^{1+\f{1}{2q}}u)_t, t^{1+\f{1}{2q}}\nabla^2u, t^{1+\f{1}{2q}}\nabla P\bigr)\bigr\|_{L^{2q,\infty}(0,T;L^p)}\leq C\|m_0\|_{\BP}.
 \end{align}
Moreover, there holds
\begin{align}\label{decay.nabla u_L^infty}
t\|\nabla u\|_{L^{\infty}}\leq C\|m_0\|_{\BP}.
\end{align}
\end{prop}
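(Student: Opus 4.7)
The plan is to mirror the proof of Proposition \ref{prop.tu_t L^q,infty(L^p)}, promoting the weight from $t$ to $t^{1+\f{1}{2q}}$ and the Lorentz index from $q$ to $2q$. Starting from
\begin{align*}
u_t-\Delta u+\nabla P=(1-\rho)u_t-\rho u\cdot\nabla u,
\end{align*}
I would multiply by $t^{1+\f{1}{2q}}$ and use $(t^{1+\f{1}{2q}}u)_t=t^{1+\f{1}{2q}}u_t+\bigl(1+\f{1}{2q}\bigr)t^{\f{1}{2q}}u$ to recast the equation in Stokes form
\begin{align*}
(t^{1+\f{1}{2q}}u)_t-\Delta(t^{1+\f{1}{2q}}u)+\nabla(t^{1+\f{1}{2q}}P)=(1-\rho)(t^{1+\f{1}{2q}}u)_t+\rho\bigl(1+\f{1}{2q}\bigr)t^{\f{1}{2q}}u-t^{1+\f{1}{2q}}\rho u\cdot\nabla u,
\end{align*}
with vanishing initial trace. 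Since $2-\f{2}{2q}=1+\f3p+\f1q$, applying Lemma \ref{lem.maximal regularity} with the Lorentz time-index $2q$ bounds the LHS of \eqref{estimate.t^1+1/2q u} by the $L^{2q,\infty}(0,T;L^p)$-norm of the right-hand side.

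I would then estimate the RHS term by term. The factor $(1-\rho)(t^{1+\f{1}{2q}}u)_t$ contributes $\leq C\varepsilon_0\|(t^{1+\f{1}{2q}}u)_t\|_{L^{2q,\infty}(L^p)}$ by \eqref{assumption.rho(t)} and is absorbed into the LHS. For the zeroth-order source $\rho\bigl(1+\f{1}{2q}\bigr)t^{\f{1}{2q}}u$, the pointwise decay $\|u(t)\|_{L^p}\leq Ct^{-\f1q}\|m_0\|_{\BP}$ from \eqref{decay.u_L^p} yields $t^{\f{1}{2q}}\|u\|_{L^p}\leq Ct^{-\f{1}{2q}}\|m_0\|_{\BP}$. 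For the convection, I would split $t^{1+\f{1}{2q}}u\cdot\nabla u=(t^{\f12}u)\,(t^{\f12+\f{1}{2q}}\nabla u)$ and apply H\"older together with $\|t^{\f12}u\|_{L^\infty_t(L^\infty_x)}\leq C\|m_0\|_{\BP}$ and $t^{\f12+\f{1}{2q}}\|\nabla u\|_{L^p}\leq Ct^{-\f{1}{2q}}\|m_0\|_{\BP}$, both coming from \eqref{decay.u_L^p}. The observation that makes everything fit is that $\|t^{-\f{1}{2q}}\|_{L^{2q,\infty}(0,T)}$ is bounded by an absolute constant independent of $T$, thanks to the scale-invariance of this Lorentz norm for a pure power. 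Combined, these give the RHS $\lesssim\|m_0\|_{\BP}+\|m_0\|_{\BP}^2\lesssim\|m_0\|_{\BP}$, hence \eqref{estimate.t^1+1/2q u}.

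For the decay \eqref{decay.nabla u_L^infty}, I would use a dyadic decomposition: by Bernstein, $\|\nabla u\|_{L^\infty}\lesssim\sum_j 2^{j(1+\f3p)}\|\dot\Delta_ju\|_{L^p}$. Splitting at a cutoff $j_0\in\Z$, on low frequencies I would bound $\|\dot\Delta_ju\|_{L^p}\lesssim\|u\|_{L^p}\lesssim t^{-\f1q}\|m_0\|_{\BP}$, producing a low-frequency contribution $\lesssim 2^{j_0(1+\f3p)}t^{-\f1q}\|m_0\|_{\BP}$; on high frequencies I would use the just-established bound $\|u(t)\|_{\dot B^{1+\f3p+\f1q}_{p,\infty}}\lesssim t^{-(1+\f{1}{2q})}\|m_0\|_{\BP}$, producing a high-frequency contribution $\lesssim 2^{-\f{j_0}{q}}t^{-(1+\f{1}{2q})}\|m_0\|_{\BP}$. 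Optimizing in $j_0$ and invoking the identity $\f3p+\f2q=1$, a short computation shows the optimal exponent of $t$ is exactly $-1$, giving precisely \eqref{decay.nabla u_L^infty}.

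The principal technical obstacle is verifying that the zeroth-order source and the convective term actually belong to $L^{2q,\infty}(L^p)$ with bounds uniform in $T$; this reduces to the $T$-independent boundedness of $\|t^{-\f{1}{2q}}\|_{L^{2q,\infty}(0,T)}$, which is ultimately a manifestation of how the scaling relation $\f3p+\f2q=1$ matches the weight $t^{1+\f{1}{2q}}$ with the Lorentz summability index $2q$.
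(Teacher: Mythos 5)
Your proposal is correct and follows the same strategy as the paper: multiply the momentum equation by $t^{1+\f{1}{2q}}$, recast it in Stokes form with zero initial trace, apply Lemma \ref{lem.maximal regularity} in the Lorentz time-index $2q$ (noting $2-\f{1}{q}=1+\f3p+\f1q$), absorb the $(1-\rho)$-term, and estimate the zeroth-order and convective sources via the pointwise decay \eqref{decay.u_L^p} together with the scale-invariant fact that $\|t^{-\f{1}{2q}}\|_{L^{2q,\infty}(0,\infty)}=1$. The single cosmetic difference is in the last step: for \eqref{decay.nabla u_L^infty} the paper invokes the interpolation inequality $\|\nabla u\|_{L^\infty}\lesssim\|u\|_{L^\infty}^{\f{1}{q+1}}\|u\|_{\dot B^{1+\f3p+\f1q}_{p,\infty}}^{\f{q}{q+1}}$ directly, while you derive the same $t^{-1}$ decay by a Bernstein/dyadic split at $2^{j_0}\sim t^{-1/2}$; these are the same estimate proved two ways, so there is no genuine divergence of method.
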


\begin{proof} We first get, by
 multiplying both sides of \eqref{INS} by  $t^{1+\f{1}{2q}},$ that
    \begin{align*}
        (t^{1+\f{1}{2q}}u)_t-\Delta(t^{1+\f{1}{2q}}u)+\nabla(t^{1+\f{1}{2q}}P)=(1-\rho)(t^{1+\f{1}{2q}}u)_t+(1+1/2q)t^{\f{1}{2q}} \rho u-t^{1+\f{1}{2q}}\rho u\cdot\nabla u.
    \end{align*}
    By taking advantage of Lemma \ref{lem.maximal regularity}, we get
    \begin{align*}
    &\|t^{1+\f{1}{2q}}u\|_{L^{\infty}(0,T;\dot{B}_{p,\infty}^{1+\f3p+\f1q})}+\bigl\|\bigl((t^{1+\f{1}{2q}}u)_t, t^{1+\f{1}{2q}}\nabla^2u, t^{1+\f{1}{2q}}\nabla P\bigr)\bigr\|_{L^{2q,\infty}(0,T;L^p)}\\
    & \lesssim \|(1-\rho)(t^{1+\f{1}{2q}}u)_t\|_{L^{2q,\infty}(0,T;L^p)}+\|t^{\f{1}{2q}}\rho u\|_{L^{2q,\infty}(0,T;L^p)}+\|t^{1+\f{1}{2q}}\rho u\cdot\nabla u\|_{L^{2q,\infty}(0,T;L^p)}\\
    & \lesssim \|\rho-1\|_{L^{\infty}(0,T;L^{\infty})}\|(t^{1+\f{1}{2q}}u)_t\|_{L^{2q,\infty}(0,T;L^p)}\\
    &\qquad\qquad\qquad\qquad+\|\rho\|_{L^{\infty}(0,T;L^{\infty})} \bigl(\|t^{\f{1}{2q}}u\|_{L^{2q,\infty}(0,T;L^p)}+\|t^{1+\f{1}{2q}} u\cdot\nabla u\|_{L^{2q,\infty}(0,T;L^p)}\bigr).
    \end{align*}
    Thanks to \eqref{assumption.rho(t)}, the first term of the right hand side can be absorbed by the left hand side. By \eqref{decay.u_L^p}, we have
    \begin{align*}
        \|t^{\f{1}{2q}}u\|_{L^{2q,\infty}(0,T;L^p)}\leq \|t^{-\f{1}{2q}}\|_{L^{2q,\infty}(0,T)}\|t^{\f1q}u(t)\|_{L^{\infty}(0,T;L^p)}\leq C\|m_0\|_{\BP},
    \end{align*}
and
    \begin{align*}
     \|t^{1+\f{1}{2q}} u\cdot\nabla u\|_{L^{2q,\infty}(0,T;L^p)}\leq  \bigl(t^{1+\f{1}{2q}}\|u(t)\|_{L^{\infty}}\|\nabla u\|_{L^p}\bigr)_{L^{2q,\infty}(0,T)}\leq C\|m_0\|^2_{\BP}.
    \end{align*}
    This proves \eqref{estimate.t^1+1/2q u}.
    By the interpolation inequality, \eqref{decay.u_L^p} and \eqref{estimate.t^1+1/2q u}, we obtain
    \begin{align*}
        \|\nabla u(t)\|_{L^{\infty}}\lesssim \|u(t)\|^{\f{1}{q+1}}_{L^{\infty}}\|u(t)\|^{\f{q}{q+1}}_{\dot{B}_{p,\infty}^{1+\f3p+\f1q}}\leq Ct^{-1}\|m_0\|_{\BP}.
    \end{align*}

    This completes the proof of Proposition \ref{prop.t^{1+1/2q}u L^2q,infty(L^p)}.
\end{proof}

In order to derive higher order time weighted estimates, we consider the equation of $\dot{u}$.
 Indeed by taking $D_t{\eqdef}\p_t+u\cdot \nabla$ to \eqref{INS}, we find
\begin{align}\label{eq.dot u}
    \rho \ddot{u}-\Delta\dot{u}+\nabla\dot{P}=f \with f{\eqdef}-\Delta u\cdot\nabla u-2\nabla u\cdot \nabla^2u+\nabla u\cdot\nabla P.
\end{align}

\begin{prop}\label{prop.t^2dotu L^q,infty(L^p)}
 Under the assumptions of Proposition \ref{prop.u_L^q,infty(L^p)}, we have
 \begin{align}\label{estimate.t^2dotu}
    \|t^2\dot{u}\|_{L^{\infty}(0,T;\dot{B}_{p,\infty}^{1+\f3p})}+\|((t^2\dot{u})_t, t^2\nabla^2\dot{u})\|_{L^{q,\infty}(0,T;L^p)}\leq C\|m_0\|_{\BP}.
 \end{align}
Furthermore, we have
\begin{align*}%\label{estimate.t3/2nabla dot u}
\|t^{\f32}\nabla \dot{u}\|_{L^{q,\infty}(0,T;L^p)}\leq C\|m_0\|_{\BP}.
\end{align*}
\end{prop}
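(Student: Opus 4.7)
The plan is to propagate the time-weighted estimate from $t\dot u$ (Proposition \ref{prop.tu_t L^q,infty(L^p)}) to $t^2\dot u$ by recasting the equation \eqref{eq.dot u} as a perturbed Stokes system for $t^2\dot u$ and invoking the maximal regularity estimate of Lemma \ref{lem.maximal regularity}, in direct analogy with the proofs of Propositions \ref{prop.tu_t L^q,infty(L^p)} and \ref{prop.t^{1+1/2q}u L^2q,infty(L^p)}. Using $\ddot u=\partial_t\dot u+u\cdot\nabla\dot u$ and $(t^2\dot u)_t=t^2\partial_t\dot u+2t\dot u$, multiplying \eqref{eq.dot u} by $t^2$ and rearranging gives
\begin{align*}
(t^2\dot u)_t-\Delta(t^2\dot u)+\nabla(t^2\dot P)=(1-\rho)(t^2\dot u)_t+2t\rho\dot u-\rho u\cdot\nabla(t^2\dot u)+t^2 f,
\end{align*}
to which I would apply Lemma \ref{lem.maximal regularity} so that $\|t^2\dot u\|_{L^\infty(0,T;\dot B^{1+3/p}_{p,\infty})}+\|((t^2\dot u)_t,t^2\nabla^2\dot u)\|_{L^{q,\infty}(0,T;L^p)}$ is controlled by the $L^{q,\infty}(L^p)$ norm of the right-hand side.

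The first few source terms are routine. The piece $(1-\rho)(t^2\dot u)_t$ is absorbed by the left-hand side using the smallness \eqref{assumption.rho(t)}; the estimate $\|2t\rho\dot u\|_{L^{q,\infty}(L^p)}\lesssim \|m_0\|_{\BP}$ follows from Proposition \ref{prop.tu_t L^q,infty(L^p)}. For the forcing $t^2 f$, each of the three terms $\Delta u\cdot\nabla u$, $\nabla u\cdot\nabla^2 u$, and $\nabla u\cdot\nabla P$ is a product of a second-order derivative of $u$ and a first-order derivative; splitting $t^2=t\cdot t$ and applying H\"older with $\|t\nabla^2 u\|_{L^{q,\infty}(L^p)}+\|t\nabla P\|_{L^{q,\infty}(L^p)}\lesssim \|m_0\|_{\BP}$ from \eqref{estimate.tu} together with $\|t\nabla u\|_{L^\infty(L^\infty)}\lesssim \|m_0\|_{\BP}$ from \eqref{decay.nabla u_L^infty} yields $\|t^2 f\|_{L^{q,\infty}(L^p)}\lesssim \|m_0\|_{\BP}^2\lesssim \|m_0\|_{\BP}$.

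The main obstacle is the transport contribution $\rho u\cdot\nabla(t^2\dot u)$, which couples the estimate back to itself at one order higher in derivatives. I would treat it by combining the Gagliardo--Nirenberg interpolation
\begin{align*}
t^{3/2}\|\nabla\dot u\|_{L^p}\lesssim (t\|\dot u\|_{L^p})^{1/2}\,(t^2\|\nabla^2\dot u\|_{L^p})^{1/2}
\end{align*}
with the power rule and H\"older inequality in Lorentz spaces to get
\begin{align*}
\|t^{3/2}\nabla\dot u\|_{L^{q,\infty}(L^p)}\lesssim \|t\dot u\|_{L^{q,\infty}(L^p)}^{1/2}\,\|t^2\nabla^2\dot u\|_{L^{q,\infty}(L^p)}^{1/2}.
\end{align*}
Factoring $t^2 u\cdot\nabla\dot u=(t^{1/2}u)(t^{3/2}\nabla\dot u)$ and using $\|t^{1/2}u\|_{L^\infty(L^\infty)}\lesssim \|m_0\|_{\BP}$ from \eqref{decay.u_L^p} with Proposition \ref{prop.tu_t L^q,infty(L^p)} produces
\begin{align*}
\|\rho u\cdot\nabla(t^2\dot u)\|_{L^{q,\infty}(L^p)}\lesssim \|m_0\|_{\BP}^{3/2}\,\|t^2\nabla^2\dot u\|_{L^{q,\infty}(L^p)}^{1/2},
\end{align*}
and Young's inequality splits this into a piece absorbed into the left-hand side and a remainder of order $\|m_0\|_{\BP}^3\lesssim \|m_0\|_{\BP}$, closing \eqref{estimate.t^2dotu}. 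Feeding the resulting bound on $\|t^2\nabla^2\dot u\|_{L^{q,\infty}(L^p)}$ back into the displayed interpolation then delivers the \emph{furthermore} statement $\|t^{3/2}\nabla\dot u\|_{L^{q,\infty}(L^p)}\lesssim \|m_0\|_{\BP}$.
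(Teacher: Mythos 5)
The proposal has a genuine gap in the crucial step: you cannot apply Lemma \ref{lem.maximal regularity} to the system you write for $t^2\dot u$. That lemma is a Stokes maximal regularity estimate and requires the solution to be divergence-free, but unlike $u$, the material derivative $\dot u=u_t+u\cdot\nabla u$ is \emph{not} divergence-free: since $\dive u=0$, one has $\dive\dot u=\sum_{i,j}\partial_iu^j\partial_ju^i=\mathrm{Tr}(\nabla u\cdot\nabla u)\ne 0$. The analogy you invoke with Propositions \ref{prop.tu_t L^q,infty(L^p)} and \ref{prop.t^{1+1/2q}u L^2q,infty(L^p)} breaks down precisely here: those propositions concern time weights of $u$ itself, which is divergence-free, whereas the present proposition concerns $\dot u$.

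The paper circumvents this by not treating $(t^2\dot u,t^2\dot P)$ as a Stokes system at all. It instead computes $\nabla(t^2\dot P)=\Q\bigl(-t^2\rho u\cdot\nabla\dot u+2t\rho\dot u+t^2f-\rho(t^2\dot u)_t+\Delta(t^2\dot u)\bigr)$ explicitly via the projector $\Q=-\nabla(-\Delta)^{-1}\dive$, using $\Q(t^2\Delta\dot u)=t^2\nabla\mathrm{Tr}(\nabla u\cdot\nabla u)$ and $\Q(\rho(t^2\dot u)_t)=\Q\bigl((\rho-1)(t^2\dot u)_t+2t\dot u+2t^2u\cdot\nabla u_t\bigr)$ (after the observation $\Q(u_t\cdot\nabla u)=\Q(u\cdot\nabla u_t)$), and then rewrites the whole thing as a \emph{heat} equation for $t^2\dot u$ to which scalar heat maximal regularity (Proposition 2.1 in \cite{DMT}) together with $L^p$-boundedness of $\PP,\Q$ applies. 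This produces two extra source terms your plan does not address, namely $\Q(2t\dot u+2t^2u_t\cdot\nabla u)$ and $-t^2\nabla\mathrm{Tr}(\nabla u\cdot\nabla u)$, both of which are controlled via \eqref{estimate.tu} and \eqref{decay.nabla u_L^infty}. Your handling of the transport term via the interpolation $\|t^{3/2}\nabla\dot u\|_{L^{q,\infty}(L^p)}\lesssim\|t\dot u\|^{1/2}_{L^{q,\infty}(L^p)}\|t^2\nabla^2\dot u\|^{1/2}_{L^{q,\infty}(L^p)}$ and Young's inequality is exactly the paper's argument and is fine, as is the treatment of $t^2 f$; it is only the invocation of the Stokes lemma, and the consequent omission of the non-solenoidal correction terms, that needs to be repaired.
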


\begin{proof}
   It follows from  \eqref{eq.dot u} that $t^2\dot{u}$ verifies
    \begin{align*}
     \rho (t^2\dot{u})_t-\Delta(t^2\dot{u})+\nabla(t^2\dot{P})=-t^2\rho u\cdot\nabla\dot{u}+2t\rho\dot{u}+t^2f.
    \end{align*}
    Recalling that
    \begin{align*}
        \PP{\eqdef}\mathrm{Id}+\nabla(-\Delta)^{-1}\dive \andf \Q{\eqdef}-\nabla(-\Delta)^{-1}\dive,
    \end{align*}
    we observe that
    \begin{align*}
        \nabla(t^2\dot{P})=\Q\Bigl(-t^2\rho u\cdot\nabla\dot{u}+2t\rho\dot{u}+t^2f-\rho (t^2\dot{u})_t+\Delta(t^2\dot{u})\Bigr).
    \end{align*}
   Due to $\dive u=0$, we can write
    \begin{align*}
        \dive \dot{u}=\sum_{1\leq i,j\leq 3}\p_iu^j\p_ju^i=\mathrm{Tr}(\nabla u\cdot \nabla u),
    \end{align*}
    hence we get
    \begin{align*}
        \Q(t^2\Delta\dot{u})=t^2\nabla \mathrm{Tr}(\nabla u\cdot \nabla u),
    \end{align*}
    and we have
    \begin{align*}
     \Q\bigl(\rho (t^2\dot{u})_t\bigr)=\Q\Bigl((\rho-1) (t^2\dot{u})_t+2t\dot{u}+t^2u\cdot\nabla u_t+t^2u_t\cdot\nabla u\Bigr),\quad \Q(u_t\cdot\nabla u)=\Q(u\cdot\nabla u_t).
    \end{align*}
 Finally, we  obtain
    \begin{align}
     (t^2\dot{u})_t-\Delta(t^2\dot{u})=\PP[(1-\rho)(t^2\dot{u})_t&-t^2\rho u\cdot\nabla\dot{u}+2t\rho\dot{u}+t^2f]\label{eq.t^2dotu}\\
     &+\Q(2t\dot{u}+{2t^2u_t\cdot\nabla u})-t^2\nabla\mathrm{Tr}(\nabla u\cdot \nabla u).\nonumber
    \end{align}

At this point, we use the maximal regularity estimate for the heat equation stated in [Proposition 2.1, \cite{DMT}], and the boundedness of $\PP$ and $\Q$ on $L^p$ to conclude that
\begin{align*}
    &\|t^2\dot{u}\|_{L^{\infty}(0,T;\dot{B}_{p,\infty}^{1+\f3p})}+\|((t^2\dot{u})_t, t^2\nabla^2\dot{u})\|_{L^{q,\infty}(0,T;L^p)}\\
    &\quad\leq {C\|(\rho-1)(t^2\dot{u})_t\|_{L^{q,\infty}(0,T;L^p)}+C}\|t^2\rho u\cdot\nabla\dot{u}{-2t\rho\dot{u}-t^2f}\|_{L^{q,\infty}(0,T;L^p)}\\
    &\qquad+C\|t\dot{u}+t^2u_t\cdot\nabla u\|_{L^{q,\infty}(0,T;L^p)}+C\|t^2\nabla\mathrm{Tr}(\nabla u\cdot \nabla u)\|_{L^{q,\infty}(0,T;L^p)}.
\end{align*}
Thanks to \eqref{assumption.rho(t)}, the first term of the right hand side can be absorbed by the left hand side. By \eqref{decay.u_L^p} and the interpolation inequality, we get
\begin{align*}
 C\|t^2\rho u\cdot\nabla\dot{u}\|_{L^{q,\infty}(0,T;L^p)}&\leq C\|\rho\|_{L^{\infty}(0,T;L^{\infty})} \bigl(t^2\|u\|_{L^{\infty}}\|\nabla\dot{u}\|_{L^p}\bigr)_{L^{q,\infty}(0,T)}\\
 &\leq C \|m_0\|_{\BP}\bigl(t^{\f32}\|\dot{u}\|^{\f12}_{L^{p}}\|\nabla^2\dot{u}\|^{\f12}_{L^p}\bigr)_{L^{q,\infty}(0,T)}\\
 &\leq C \|m_0\|_{\BP}\|t\dot{u}\|^{\f12}_{L^{q,\infty}(0,T;L^p)}\|t^2\nabla^2\dot{u}\|^{\f12}_{L^{q,\infty}(0,T;L^p)}\\
 &\leq \f14\|t^2\nabla^2\dot{u}\|_{L^{q,\infty}(0,T;L^p)}+C\|m_0\|^2_{\BP}\|t\dot{u}\|_{L^{q,\infty}(0,T;L^p)}.
\end{align*}
By {the definition of $f$ in \eqref{eq.dot u}},  we have
\begin{align*}
   &\|t^2f\|_{L^{q,\infty}(0,T;L^p)}+\|t^2\nabla\mathrm{Tr}(\nabla u\cdot \nabla u)\|_{L^{q,\infty}(0,T;L^p)}+\|t^2u_t\cdot\nabla u\|_{L^{q,\infty}(0,T;L^p)}\\
   &\leq C\|t\nabla u\|_{L^{\infty}(0,T;L^{\infty})}\|(t\nabla^2u,t\nabla P,tu_t)\|_{L^{q,\infty}(0,T;L^p)}.
\end{align*}
Then \eqref{estimate.t^2dotu} follows {from \eqref{estimate.tu} and \eqref{decay.nabla u_L^infty}}.
Finally, by the interpolation inequality, we obtain
\begin{align*}
    \|t^{\f32}\nabla\dot{u}\|_{L^{q,\infty}(0,T;L^p)}\leq C\|t\dot{u}\|^{\f12}_{L^{q,\infty}(0,T;L^p)}\|t^2\nabla^2\dot{u}\|^{\f12}_{L^{q,\infty}(0,T;L^p)}\leq C\|m_0\|_{\BP}.
\end{align*}

This completes the proof of Proposition \ref{prop.t^2dotu L^q,infty(L^p)}.
\end{proof}

\subsection{Existence part of Theorem \ref{thm.INS_Cannone-Meyer-Planchon}}\label{Subsec.2.2}
This subsection is devoted to the proof of existence part  in Theorem \ref{thm.INS_Cannone-Meyer-Planchon}.

For $\varepsilon\in(0,1)$, we consider
$$ m_0^{\varepsilon}\in C^{\infty}(\R^3)
\andf \rho_0^{\varepsilon}\in C^{\infty}(\R^3) \with \|\rho_0^{\varepsilon}-1\|_{L^{\infty}}\leq \|\rho_0-1\|_{L^\infty} $$
such that
\begin{equation*}
m_0^{\varepsilon}\to m_0 \text{~in~} \BP(\R^3), \quad \rho_0^{\varepsilon}\rightharpoonup \rho_0 \text{~in~} L^{\infty} \text{~weak-*},
\andf~ \rho_0^{\varepsilon}\to \rho_0 \text{~in~} L^{p}_{\text{loc}}(\R^3) \text{~if~} p<\infty,
\end{equation*}
as $\varepsilon\to 0+$. In light of the classical strong solution theory for the system \eqref{INS}, there exists a unique global smooth solution $(\rho^{\varepsilon}, u^{\varepsilon}, P^{\varepsilon})$
corresponding to data $(\rho_0^{\varepsilon}, m_0^{\varepsilon})$. Thus, the triple $(\rho^{\varepsilon}, u^{\varepsilon}, P^{\varepsilon})$  satisfies all the a priori estimates of Subsection \ref{Subsec.2.1} uniformly with respect to $\varepsilon\in(0,1)$. Hence, $(\rho^{\varepsilon}, u^{\varepsilon})$ converges weakly (or weakly-*) to a limit $(\rho, u)$ as $\varepsilon\to0+$, up to subsequence. To show that the limit solves \eqref{INS} weakly, in view of standard compactness arguments, it suffices to prove that
\begin{equation}\label{Eq.convergence}
    \lim_{\varepsilon\to0+}\int_0^t\langle \rho^\varepsilon u^\varepsilon\otimes u^\varepsilon(s)-\rho u\otimes u(s), \nabla\varphi\rangle\,\ds=0\quad\forall\ t>0
\end{equation}
for any divergence-free function $\varphi\in C_c^\infty([0, +\infty)\times\R^3)$.  Since
\begin{align*}
    \int_0^t|\langle \rho^\varepsilon u^\varepsilon\otimes u^\varepsilon(s),\nabla\varphi\rangle|\,\ds
    &\leq\|\rho_0\|_{L^{\infty}} \|u^\varepsilon\|^2_{L^{q,\infty}(0,t;L^p)}\|\nabla \varphi\|_{L^{\f{q}{q-2},1}(0,t;L^{\f{p}{p-2}})}\leq Ct^{\f{q-2}{q}},\\
    \int_0^t|\langle \rho u\otimes u(s), \nabla\varphi\rangle|\,\ds&\leq \|\rho_0\|_{L^{\infty}} \|u\|^2_{L^{q,\infty}(0,t;L^p)}\|\nabla \varphi\|_{L^{\f{q}{q-2},1}(0,t;L^{\f{p}{p-2}})}\leq Ct^{\f{q-2}{q}},
\end{align*}
for all $\varepsilon\in(0,1)$, $t>0$, where $C>0$ is a constant {depending} on $\|\rho_0\|_{L^{\infty}},\|m_0\|_{\BP}$.
{Now we fix $t>0$ and let $\eta>0$.} Taking $\delta_0{\eqdef}\min\{t/2, (\eta/(4C))^{\f{q}{q-2}}\}>0$, we find
\[\int_0^{\delta_0}\left|\langle \rho^\varepsilon u^\varepsilon\otimes u^\varepsilon(s)-\rho u\otimes u(s), \nabla\varphi\rangle\right|\,\ds<\frac\eta2,\quad\forall\ \varepsilon\in(0,1).\]
To prove \eqref{Eq.convergence}, it suffices to show that
\begin{equation}\label{Eq.convergence_delta}
    \lim_{\varepsilon\to0+}\int_{\delta_0}^t\langle \rho^\varepsilon u^\varepsilon\otimes u^\varepsilon(s)-\rho u\otimes u(s), \nabla\varphi\rangle\,\ds=0.
\end{equation}
Indeed, assuming that $\operatorname{supp}_x\varphi(s)\subset B(0, R)$ for all $s\in[\delta_0, t]$, on one hand, by the weak-* convergence of $\rho^\varepsilon $ to $\rho $ in $L^{\infty}([0,T]\times B_R)$ (up to subsequence),
we have
\begin{equation}\label{Eq.convergence_delta1}
    \lim_{\varepsilon\to0+}\int_{\delta_0}^t\langle (\rho^\varepsilon-\rho) u\otimes u(s), \nabla\varphi\rangle\,\ds=0.
\end{equation}
{On the other hand, thanks to \eqref{estimate.tu}, we infer that $\{t\p_tu^{\varepsilon}\}$ is uniformly bounded in $L^{q,\infty}(\R^+;L^p)$, from which we know that $\{\p_tu^{\varepsilon}\}$ is uniformly bounded in $L^{2}(\delta_0,t;L^p)$.
By %\eqref{estimate.u_L^q,infty(L^p)}
 \eqref{estimate.tu} and \eqref{decay.u_L^p},
we get that $u^{\varepsilon}$ is uniformly bounded in $L^{\infty}(\delta_0,t;\dot{B}^{1+\f3p}_{p,\infty}\cap L^p)$.} Then by the Ascoli-Arzela theorem, we conclude that
\begin{align*}
    &u^{\varepsilon}\to u \text{~in~} L^{q}_{loc}(\delta_0,t;L^p_{loc}),\\
     &\nabla u^{\varepsilon}\to \nabla u \text{~in~} L^{q}_{loc}(\delta_0,t;L^p_{loc}),
\end{align*}
 which along with the uniform boundedness of $\rho^\varepsilon $ in $L^{\infty}$ implies that
\begin{equation}\label{Eq.convergence_delta2}
    \lim_{\varepsilon\to0+}\int_{\delta_0}^t\left|\langle \rho^\varepsilon (u^\varepsilon\otimes u^\varepsilon-u\otimes u)(s), \nabla\varphi\rangle\right|\,\ds=0.
\end{equation}
Now \eqref{Eq.convergence_delta} follows from \eqref{Eq.convergence_delta1} and \eqref{Eq.convergence_delta2}.

\subsection{Uniqueness part of Theorem \ref{thm.INS_Cannone-Meyer-Planchon}}\label{Subsec.2.3}
The goal of this subsection is to give the proof of uniqueness part in Theorem \ref{thm.INS_Cannone-Meyer-Planchon}.

\begin{proof}[Proof of uniqueness of Theorem \ref{thm.INS_Cannone-Meyer-Planchon}]
Let $p\in (3,6),~ p'\in (1,\infty)$ satisfy $\f1p+\f{1}{p'}=1$.
 Let $(\rho, u, \nabla P)$ and $(\bar \rho, \bar u, \nabla\bar P)$ be two solutions to \eqref{INS} with the same initial data $(\rho_0, m_0)$ and satisfy all the hypotheses listed in Theorem \ref{thm.INS_Cannone-Meyer-Planchon}. We denote $\delta\!\rho{\eqdef}\rho-\bar{\rho}$ and $\delta\!u{\eqdef}u-\bar{u}$. Then $(\delta\!\rho, \delta\!u)$ satisfies
    \begin{equation}\label{Eq.delta u}
     \left\{
     \begin{array}{l}
     \partial_t\delta\!\rho+\bar{u}\cdot\nabla \delta\!\rho+\delta\!u\cdot\nabla\rho=0,\qquad (t,x)\in \mathbb{R}^{+}\times\mathbb{R}^{3},\\
     \rho(\partial_t\delta\!u+u\cdot\nabla \delta\!u)-\Delta \delta\!u+\nabla \delta\!P=-\delta\!\rho\dot{\bar{u}}-\rho\delta\!u\cdot\nabla\bar{u},\\
     \dive \delta\!u = 0,\\
     (\delta\!\rho, \rho\delta\!u)|_{t=0} =(0, 0).
     \end{array}
     \right.
     \end{equation}
Here $\dot{\bar{u}}{\eqdef}\pa_t\bar{u}+\bar{u}\cdot\nabla\bar{u}$.
Let $\delta\!v{\eqdef}\PP(\rho\delta\!u)$, then $\delta\!v$ satisfies
\begin{align}\label{eq.delta v}
    \p_t\delta\!v-\Delta \delta\!v=\Delta(\delta\!u-\delta\!v)-\PP\dive (\rho u\otimes \delta\!u)-\PP(\delta\!\rho\dot{\bar{u}}+\rho\delta\!u\cdot\nabla\bar{u}).
\end{align}
And similar to \eqref{estimate.u_v control}, we have
\begin{align}\label{estimate.delta u_v control}
 \|\delta\!u(t)\|_{L^p}\leq C\|\delta\!v(t)\|_{L^p} \andf \|\delta\!u(t)-\delta\!v(t)\|_{L^p}\leq C\varepsilon_0\|\delta\!v(t)\|_{L^p}.
\end{align}
Let $T>0$. For all $t\in(0,T]$, set
\begin{align}
&X(t){\eqdef}\sup_{s\in (0,t]}\|\delta\!\rho(s)\|_{\dot{W}^{-1,p}},\label{X_t}\\
&Y(t){\eqdef}\| \delta\!v\|_{L^{q,\infty}(0,t;L^p)}.\label{Y_t}
\end{align}
For any $\varphi\in C^{\infty}_c(\R^3)$, as $\|\nabla \bar u(t)\|_{L^{\infty}} \leq C\varepsilon_0 t^{-1}$, by the classical theory on transport equations (\cite{BCD}, Theorem 3.2), there exists a unique $\phi(s,x)$ solving
\begin{align*}
\partial_s\phi+\bar{u}\cdot\nabla\phi=0,\quad (s,x)\in(0,t]\times\R^3, \qquad \phi(t,x)=\varphi(x),
\end{align*}
with the estimate
\begin{align}\label{transport}
\|\nabla\phi(s)\|_{L^{p'}}\leq C\|\nabla\varphi\|_{L^{p'}}\exp\bigl(\int_s^t\|\nabla\bar{u}(\tau)\|_{L^{\infty}}\,\mathrm{d}\tau\bigr),\quad\forall\ s\in(0, t].
\end{align}
Note that(here $\f1q+\f{1}{q'}=1$)
\begin{align}\label{eq.transport_integrable}
\exp\bigl(\int_s^t\|\nabla\bar{u}(\tau)\|_{L^{\infty}}\,\mathrm{d}\tau\bigr)\leq C(t/s)^{C\varepsilon_0} \andf \|(t/s)^{C\varepsilon_0}\|_{L^{q',1}(0,t)}\leq Ct^{1-\f1q}.
\end{align}
 Using the first equation of \eqref{Eq.delta u} and integration by parts, we have
    \begin{align*}
\frac{\mathrm{d}}{\ds} \langle\delta\!\rho(s), \phi(s)\rangle
&=\langle -\dive\{ \delta\!\rho\bar{u}\},\phi\rangle-\langle \dive \{\rho \delta\!u\},\phi\rangle-\langle \delta\!\rho,\bar{u}\cdot\nabla\phi\rangle\nonumber\\
&=\langle \rho\delta\!u,\nabla\phi(s)\rangle,\quad\forall\ s\in(0, t],
\end{align*}
then integrating over $(0, t)$, we get by \eqref{transport} and \eqref{eq.transport_integrable} that
    \begin{align}
      |\langle\delta\!\rho(t), \varphi\rangle|
      &\leq \int_0^t|\langle\rho\delta\!u(s),\nabla\phi(s)\rangle|\,\ds\leq\int_0^t\|\rho\delta\!u(s)\|_{L^p}\|\nabla\phi(s)\|_{L^{p'}}\,\ds\label{delta rho.phi}\\
     &\leq C t^{1-\f1q}\|\rho_0\|_{L^{\infty}} \|\delta\!u\|_{L^{q,\infty}(0,t;L^p)}\|\nabla\varphi\|_{L^{p'}}\nonumber
    \end{align}
    for all $t\in(0, T]$.
Then it follows from \eqref{X_t} and \eqref{Y_t} that
\begin{align}\label{Eq.X<Y}
  X(t)\leq Ct^{1-\f1q}Y(t),\quad\forall\ t\in(0, T].
\end{align}

As for \eqref{eq.delta v}, we can write
\begin{align*}
\delta\!v(t,x)=\int_0^t e^{(t-s)\Delta}\Bigl(\Delta(\delta\!u-\delta\!v)-\PP\dive(\rho u\otimes \delta\!u)-\PP(\delta\!\rho\dot{\bar{u}}+\rho\delta\!u\cdot\nabla\bar{u})\Bigr)(s,x)\,\ds.
\end{align*}
Let us set $F{\eqdef}-(-\D)^{-1}\delta\!\rho$, which implies
\begin{align}\label{def.F}
    \delta\!\rho\dot{\bar{u}}=\D F\cdot \dot{\bar{u}}=\dive (\nabla F\cdot \dot{\bar{u}})-\nabla F\cdot \nabla\dot{\bar{u}} \andf \|\nabla F(s)\|_{L^p}\sim \|\delta\!\rho(s)\|_{\dot{W}^{-1,p}}.
\end{align}
Let $\f1r=\f{2}{q}+\f12$. Then by \eqref{estimate.delta u_v control} and Corollary \ref{cor.convolution}, we can obtain
\begin{align}
 \|\delta\!v\|_{L^{q,\infty}(0,T;L^p)}&\leq C \|\delta\!u-\delta\!v\|_{L^{q,\infty}(0,T;L^p)}+\|\rho u\otimes \delta\!u\|_{L^{q/2,\infty}(0,T;L^{p/2})}+\|\nabla F\cdot\dot{\bar{u}}\|_{L^{q/2,\infty}(0,T;L^{p/2})}\nonumber\\
 &\quad+\|\nabla F\cdot\nabla \dot{\bar{u}}\|_{L^{r,\infty}(0,T;L^{p/2})}+\|\rho\delta\!u\cdot\nabla \bar{u}\|_{L^{r,\infty}(0,T;L^{p/2})}.\label{uni1.delta v}
\end{align}
 We know from  \eqref{estimate.delta u_v control} that the first term in right hand can be absorbed by the left hand. And we get by  H\"older inequality that
\begin{align}\label{estimate.Y1}
 \|\rho u\otimes \delta\!u\|_{L^{q/2,\infty}(0,T;L^{p/2})}\leq \|\rho_0\|_{L^{\infty}}\|u\|_{L^{q,\infty}(0,T;L^p)} \|\delta\!u\|_{L^{q,\infty}(0,T;L^p)}\leq C\varepsilon_0 \|\delta\!v\|_{L^{q,\infty}(0,T;L^p)}.
 \end{align}
Then by \eqref{Eq.X<Y}, \eqref{def.F} and Proposition \ref{prop.tu_t L^q,infty(L^p)}, we have
 \begin{equation}\begin{aligned}\label{estimate.Y2}
 \|\nabla F\cdot\dot{\bar{u}}\|_{L^{q/2,\infty}(0,T;L^{p/2})}&\leq \bigl(\|\nabla F(t)\|_{L^p}\|\dot{\bar{u}}(t)\|_{L^p}\bigr)_{L^{q/2,\infty}(0,T)}\leq C\bigl(X(t)\|\dot{\bar{u}}(t)\|_{L^p}\bigr)_{L^{q/2,\infty}(0,T)}\\
 &\leq C Y(T)\|t^{-\f1q}\|_{L^{q,\infty}(0,T)}\|t\dot{\bar{u}}\|_{L^{q,\infty}(0,T;L^p)}\leq C\varepsilon_0Y(T).
 \end{aligned}\end{equation}
 Similarly, let $\f1m=\f1q+\f12$, then by \eqref{Eq.X<Y}, \eqref{def.F} and Proposition \ref{prop.t^2dotu L^q,infty(L^p)}, we have
 \begin{equation}\begin{aligned}\label{estimate.Y3}
 &\|\nabla F\cdot\nabla\dot{\bar{u}}\|_{L^{r,\infty}(0,T;L^{p/2})}\leq \bigl(\|\nabla F(t)\|_{L^p}\|\nabla\dot{\bar{u}}(t)\|_{L^p}\bigr)_{L^{r,\infty}(0,T)}\\
 &\qquad\leq C Y(T)\|t^{-\f1q-\f12}\|_{L^{m,\infty}(0,T)}\|t^{3/2}\nabla\dot{\bar{u}}\|_{L^{q,\infty}(0,T;L^p)}\leq C\varepsilon_0Y(T).
 \end{aligned}\end{equation}
 And by H\"older inequality, {\eqref{decay.u_L^p} and \eqref{estimate.delta u_v control}}, we can obtain
  \begin{equation}\begin{aligned}\label{estimate.Y4}
 \|\rho\delta\!u\cdot\nabla \bar{u}\|_{L^{r,\infty}(0,T;L^{p/2})}&\leq \|\rho_0\|_{L^{\infty}}\|\delta\!u\|_{L^{q,\infty}(0,T;L^p)}\|\nabla\bar{u}\|_{L^{m,\infty}(0,T;L^p)}\\
 &\leq C\varepsilon_0Y(T)\|t^{-\f1q-\f12}\|_{L^{m,\infty}(0,T)}
 \leq C\varepsilon_0Y(T).
 \end{aligned}\end{equation}

 Inserting \eqref{estimate.Y1}, \eqref{estimate.Y2}, \eqref{estimate.Y3} and \eqref{estimate.Y4} into \eqref{uni1.delta v} leads to $Y(T)=0$. Then by \eqref{Eq.X<Y} we know that $X(T)=0$, $\delta\!\rho\equiv 0$ on $[0,T]$. And by \eqref{estimate.delta u_v control}, we finally get $\delta\!u\equiv 0$ on $[0,T]$. Since $T>0$ is arbitrary, we complete the proof of uniqueness on the whole time interval $[0,+\infty)$.
\end{proof}

\section{Weak-Strong uniqueness}\label{Sec.weak-strong}
This section is devoted to proving the weak-strong uniqueness of solutions to \eqref{INS}, as outlined in Theorem \ref{thm.weak-strong uniqueness}.

\subsection{Decomposition of homogeneous Besov spaces}\label{Subsec.3.1}
Here we first recall the decomposition of Besov spaces. This will play a crucial role in the proof of Theorem \ref{thm.weak-strong uniqueness}. 
\begin{lem}[Proposition 1.5, \cite{Barker2017}]\label{lem.Besov decomposition}
Let $p\in (3,\infty)$. There exist $p_2\in (p,\infty)$, $\delta_2\in (0,-s_{p_2})$, $\g_1,\g_2>0$ and $C>0$, each depending only on $p$, such that for each divergence-free vector field $g\in \dot{B}^{s_p}_{p,\infty}(\R^3)$ (here $s_p:=-1+3/p$) and $N>0$, there exist divergence-free fields $g_1^N\in \dot{B}^{s_{p_2}+\delta_2}_{p_2,p_2}(\R^3)\cap \dot{B}^{s_p}_{p,\infty}(\R^3)$ and $g_2^N\in L^2(\R^3)\cap \dot{B}^{s_p}_{p,\infty}(\R^3)$ with the following properties
\begin{align*}
   g=g_1^N+g_2^N,\quad
   \|g_2^N\|_{L^2}\leq CN^{-\g_2}\|g\|_{\dot{B}^{s_p}_{p,\infty}},\quad
   \|g_1^N\|_{\dot{B}^{s_{p_2}+\delta_2}_{p_2,p_2}}\leq CN^{\g_1}\|g\|_{\dot{B}^{s_p}_{p,\infty}}.
\end{align*}
Furthermore,
\begin{align*}
\|g_1^N\|_{\dot{B}^{s_p}_{p,\infty}}+\|g_2^N\|_{\dot{B}^{s_p}_{p,\infty}}\leq C\|g\|_{\dot{B}^{s_p}_{p,\infty}}.
\end{align*}
\end{lem}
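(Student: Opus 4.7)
The plan is a Littlewood–Paley splitting at dyadic scale $2^J\sim N$. Fix $p_2\in(p,\infty)$ slightly above $p$ so that $s_{p_2}=-1+3/p_2<0$, and choose $\delta_2\in(0,-s_{p_2})$; let $\dot S_J$ be the homogeneous low-pass Littlewood–Paley projector and $\mathrm{Id}-\dot S_J$ its complement. Since both are Fourier multipliers, they commute with the Leray projector $\PP$ and therefore preserve the divergence-free condition. As a first try, set
$$ g_1^N:=\dot S_J g,\qquad g_2^N:=(\mathrm{Id}-\dot S_J)g. $$
Both pieces immediately satisfy $\|g_i^N\|_{\dot{B}^{s_p}_{p,\infty}}\le C\|g\|_{\dot{B}^{s_p}_{p,\infty}}$, and they sum to $g$.

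\textbf{Higher-regularity bound on $g_1^N$.} The plan is to exploit the Bernstein inequality $\|\Delta_jg\|_{L^{p_2}}\lesssim 2^{3j(1/p-1/p_2)}\|\Delta_jg\|_{L^p}$, valid because $p_2>p$, together with the Besov membership $\|\Delta_jg\|_{L^p}\le 2^{-js_p}\|g\|_{\dot{B}^{s_p}_{p,\infty}}$. The key algebraic identity
$$ s_{p_2}+\delta_2+3\bigl(1/p-1/p_2\bigr)-s_p=\delta_2 $$
collapses the exponent inside the dyadic sum computing $\|g_1^N\|_{\dot{B}^{s_{p_2}+\delta_2}_{p_2,p_2}}^{p_2}$ to a single geometric series $\sum_{j\le J}2^{jp_2\delta_2}\lesssim 2^{Jp_2\delta_2}\sim N^{p_2\delta_2}$, which gives $\|g_1^N\|_{\dot{B}^{s_{p_2}+\delta_2}_{p_2,p_2}}\lesssim N^{\delta_2}\|g\|_{\dot{B}^{s_p}_{p,\infty}}$. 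This determines $\gamma_1=\delta_2>0$.

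\textbf{$L^2$ bound on $g_2^N$ — the main obstacle.} This is the difficult step, and it forces the provisional construction above to be refined. Because $p>2$, Bernstein does not convert a high-frequency $L^p$ bound into an $L^2$ bound at a single dyadic block, so a sharp frequency cutoff alone cannot place $g_2^N$ in $L^2$ (indeed, typical elements of $\dot{B}^{s_p}_{p,\infty}$ such as $g(x)=|x|^{-1}$ have $\Delta_jg\notin L^p_{\mathrm{loc}}$ globally). Following Barker \cite{Barker2017}, the remedy is to couple the high-frequency projection with a spatial localization at scale $N$ — equivalently, to decompose $g$ into a Calderón-type atomic/wavelet family adapted to $\dot{B}^{s_p}_{p,\infty}$, routing those atoms whose size and support admit an $\ell^2$-summation into $g_2^N$ and the rest into $g_1^N$. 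The refined $g_2^N$ is then a nearly orthogonal $L^2$-sum whose squared norm is controlled by $N^{-2\gamma_2}\|g\|^2_{\dot{B}^{s_p}_{p,\infty}}$ for some $\gamma_2>0$ depending on $p,p_2,\delta_2$; divergence-freeness is restored by applying $\PP$, which is bounded on both $L^2$ and $\dot{B}^{s_p}_{p,\infty}$. The heart of the argument is therefore the $L^2$ extraction: one must gain integrability from the spatial structure of $g$ at scale $N$, rather than from frequency localization alone, and the refined construction is designed precisely to make that trade quantitative with exponent $\gamma_2$.
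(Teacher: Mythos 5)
Your proposal has a genuine gap at exactly the step you flag as "the main obstacle." You correctly observe that a sharp frequency cutoff cannot produce the $L^2$ bound on $g_2^N$, since for $p>2$ Bernstein goes the wrong way. But the remedy you then sketch --- "coupling the high-frequency projection with a spatial localization at scale $N$," or routing through a Calderón/wavelet atomic decomposition --- is not what Barker does, is not verified in your write-up, and is not obviously workable: you never exhibit any mechanism by which spatial localization at scale $N$ trades an $L^p$ bound for an $L^2$ bound with a power of $N$ as the exchange rate.

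The actual mechanism (used in Barker's Proposition 1.5 and reproduced in the paper's Corollary \ref{cor.decomposition}, which the paper explicitly says is "similar to the proof of Lemma \ref{lem.Besov decomposition}") is a \emph{pointwise amplitude truncation of each Littlewood--Paley block}, not a frequency or spatial cutoff. One writes, at each scale $j$,
\begin{align*}
\DD_j g \;=\; \DD_j g\,\chi_{\{|\DD_j g|\le \lambda_j\}} \;+\; \DD_j g\,\chi_{\{|\DD_j g|> \lambda_j\}}
\;=:\; \tilde g_{j,1}+\tilde g_{j,2},
\end{align*}
with a $j$- and $N$-dependent threshold $\lambda_j$. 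For the small-amplitude part one interpolates upward in integrability using the $L^\infty$ bound $\|\tilde g_{j,1}\|_{L^\infty}\le\lambda_j$:
$\|\tilde g_{j,1}\|_{L^{p_2}}^{p_2}\le\|\tilde g_{j,1}\|_{L^p}^{p}\,\lambda_j^{\,p_2-p}$. For the large-amplitude part one interpolates \emph{downward} using the lower bound $|\tilde g_{j,2}|>\lambda_j$ on its support (so $|\tilde g_{j,2}|^{2-p}\le\lambda_j^{\,2-p}$, noting $2-p<0$):
$\|\tilde g_{j,2}\|_{L^{2}}^{2}\le\|\tilde g_{j,2}\|_{L^p}^{p}\,\lambda_j^{\,2-p}$. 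It is this second inequality --- integrability gained from a pointwise \emph{lower} bound on the large-amplitude piece, not from spatial structure --- that produces the $L^2$ control, and choosing $\lambda_j$ suitably (e.g. $\lambda_j=2^{jc}$ as in the paper, with $N$ entering either through $c$ or through where you cut the sum in $j$) yields the stated $N^{-\gamma_2}$ factor after summing the Besov seminorms. Divergence-freeness is restored by applying $\PP\widetilde\Delta_j$ to each truncated block, exactly as you anticipated. Your $g_1^N$ computation via Bernstein is fine in spirit, but it is attached to the wrong decomposition; in the correct scheme the higher-$p$ bound comes from the $L^\infty$ threshold, not from Bernstein across dyadic blocks.
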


Similar to the proof of Lemma \ref{lem.Besov decomposition}, we can give the following important result. Since we require the specific form of this decomposition in the following context, for the completeness, we present the detailed proof here.
\begin{cor}\label{cor.decomposition}
For\footnote{Indeed, here we can let $c<1$ and close to $1$.} $c\in (0,1),~p\in(3,\infty),~\tilde{p}\in (p,\infty)$, we designate
\begin{align*}
\sigma\eqdef (1-c)(1-\f{p}{\tp}),\quad s\eqdef \f12-\f12 (1-c)(p-2).
\end{align*}
Assume that
\begin{align*}
-1+\f{3}{\tp}+\sigma<-1+\f{3}{p}<0<s<\f12.
\end{align*}
 Then for any divergence-free vector field $f\in \dot{B}^{-1+\f3p}_{p,\infty}(\R^3)$, there exist divergence-free fields $f_{1}\in \dot{B}^{-1+\f{3}{\tp}+\sigma}_{\tp,\infty}(\R^3)\cap\dot{B}^{-1+\f3p}_{p,\infty}(\R^3)$, $f_{2}\in \dot{B}^{s}_{2,\infty}(\R^3)\cap \dot{B}^{-1+\f3p}_{p,\infty}(\R^3)$ such that
 \begin{align*}
     &f=f_1+f_2 \with
     \|f_{1}\|^{\tilde{p}}_{\dot{B}^{-1+\f{3}{\tp}+\sigma}_{\tp,\infty}}\leq C\|f\|^{p}_{\dot{B}^{-1+\f{3}{p}}_{p,\infty}},\quad
     \|f_{2}\|^{2}_{\dot{B}^{s}_{2,\infty}}\leq C\|f\|^{p}_{\dot{B}^{-1+\f{3}{p}}_{p,\infty}}.
 \end{align*}
Furthermore,
\begin{align*}
\|f_1\|_{\dot{B}^{-1+\f{3}{p}}_{p,\infty}}+\|f_2\|_{\dot{B}^{-1+\f{3}{p}}_{p,\infty}}\leq C\|f\|_{\dot{B}^{-1+\f{3}{p}}_{p,\infty}}.
\end{align*}
\end{cor}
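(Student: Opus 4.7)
My plan is to follow the template of Barker's proof of Lemma~\ref{lem.Besov decomposition}, adapting the exponents to the specific pair $(\sigma, s)$ required here. First, I would dyadically decompose $f = \sum_{j\in\Z} g_j$ with $g_j{\eqdef}\DD_j f$ and record the basic bound $\|g_j\|_{L^p}\leq M\cdot 2^{j(1-3/p)}$, where $M{\eqdef}\|f\|_{\BP}$. Each $g_j$ is automatically divergence-free since $\DD_j$ is a Fourier multiplier and $f$ is divergence-free.

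Second, for each $j$ I would perform a pointwise truncation of $g_j$ at a carefully chosen threshold $\lambda_j=\lambda_j(M,c,p,\tp)$: set $g_j^{>}{\eqdef}g_j\mathbf{1}_{|g_j|>\lambda_j}$ and $g_j^{<}{\eqdef}g_j\mathbf{1}_{|g_j|\leq\lambda_j}$. Elementary truncation inequalities (using $2<p<\tp$) give
\begin{equation*}
  \|g_j^{>}\|_{L^2}^2\leq \lambda_j^{2-p}\|g_j\|_{L^p}^p,\qquad \|g_j^{<}\|_{L^{\tp}}^{\tp}\leq \lambda_j^{\tp-p}\|g_j\|_{L^p}^p.
\end{equation*}
The threshold is tuned (essentially $\lambda_j\propto M\cdot 2^{j(1-3/p)}\cdot 2^{-j c(1-3/p)}$) so that the $j$-dependence in each bound matches the regularity indices $-1+3/\tp+\sigma$ and $s$; the specific expressions for $\sigma$ and $s$ in terms of $c$ are precisely what makes the subsequent $j$-sums balance. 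Then I set $f_1{\eqdef}\PP\sum_j g_j^{<}$ and $f_2{\eqdef}\PP\sum_j g_j^{>}$, where $\PP$ is the Leray projector; both are divergence-free and, since $\PP f = f$, we recover $f_1+f_2=f$.

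Third, I would estimate the Besov norms. For $\|f_1\|_{\BPS}$ (negative index), I use the heat-semigroup characterization $\|h\|_{\dot{B}^{\alpha}_{r,\infty}}\sim \sup_{t>0}t^{-\alpha/2}\|e^{t\Delta}h\|_{L^r}$, combined with $L^r\to L^{\tp}$ estimates for $e^{t\Delta}$ and the above $L^{\tp}$-bound on $g_j^{<}$; splitting the $j$-sum at the scale $2^{2j}t\sim 1$ yields $\|f_1\|_{\BPS}^{\tp}\lesssim M^p$. For $\|f_2\|_{\dot{B}^s_{2,\infty}}$ (positive index $s\in(0,1/2)$), I use the finite-difference characterization of $\dot B^s_{2,\infty}$ applied to $f_2\in L^2$, estimating $\|\DD_k f_2\|_{L^2}$ from the $L^2$-bounds on $g_j^{>}$ via kernel decay and then summing in $j$ to obtain $\|f_2\|_{\dot{B}^s_{2,\infty}}^2\lesssim M^p$. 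The ``furthermore'' bound $\|f_1\|_{\BP}+\|f_2\|_{\BP}\lesssim M$ follows immediately from $\|g_j^{>}\|_{L^p}+\|g_j^{<}\|_{L^p}\leq 2\|g_j\|_{L^p}$ and the $L^p$-boundedness of $\PP$.

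The main technical obstacle is that $g_j^{<}$ and $g_j^{>}$ are pointwise truncations and hence not spectrally localized, so Bernstein-type reasoning does not apply to them directly. We circumvent this by working only with raw $L^r$ estimates and heat-kernel smoothing, and by exploiting the very specific algebraic form $\sigma=(1-c)(1-p/\tp)$, $s=\tfrac12-\tfrac12(1-c)(p-2)$, which is calibrated exactly so that the $j$-sums produced by the above estimates converge geometrically and deliver the claimed inequalities.
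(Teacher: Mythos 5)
Your first two steps (dyadic decomposition, pointwise truncation of each $\DD_j f$ at a threshold tuned to the target exponents) coincide with the paper's proof. The genuine gap is in the third step: you set $f_1=\PP\sum_j g_j^{<}$ and $f_2=\PP\sum_j g_j^{>}$ directly, acknowledge that the summands are no longer spectrally localized, and claim to circumvent this by ``raw $L^r$ estimates and heat-kernel smoothing.'' That circumvention does not work. For $\|f_1\|_{\BPS}$ (negative index $\alpha=-1+3/\tp+\sigma$) one must control $t^{-\alpha/2}\sum_j\|e^{t\Delta}g_j^{<}\|_{L^{\tp}}$ uniformly in $t$; the low-frequency block $2^{2j}t\lesssim1$ is fine using $\|e^{t\Delta}g_j^{<}\|_{L^{\tp}}\le\|g_j^{<}\|_{L^{\tp}}$, but in the high-frequency block $2^{2j}t\gtrsim1$ the only available smoothing is $\|e^{t\Delta}g_j^{<}\|_{L^{\tp}}\lesssim t^{-\f32(\f1p-\f1{\tp})}\|g_j^{<}\|_{L^p}$, which produces a $j$-independent power of $t$ multiplied by $\|g_j^{<}\|_{L^p}\lesssim2^{-j(-1+3/p)}M$; since $-1+3/p<0$, that factor grows with $j$ and the tail $\sum_{2^{2j}t\gtrsim1}$ diverges. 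You simply have no $e^{-c2^{2j}t}$-type decay because $g_j^{<}$ has no frequency support restriction. The same structural problem kills the $f_2$ estimate: the finite-difference characterization of $\dot B^s_{2,\infty}$ requires, for the sum over $j$ with $2^j|y|\lesssim1$, a bound of the form $\|g_j^{>}(\cdot+y)-g_j^{>}\|_{L^2}\lesssim|y|\,2^j\|g_j^{>}\|_{L^2}$, i.e.\ a Bernstein-type derivative bound, which fails for pointwise truncations.

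The missing idea, which is exactly what Barker's proof (and the paper) uses, is to re-localize each truncated piece before summing. Since $\DD_jf$ is spectrally supported in $|\xi|\sim2^j$, one has $\DD_jf=\widetilde{\Delta}_j\DD_jf$ with $\widetilde{\Delta}_j\eqdef\DD_{j-1}+\DD_j+\DD_{j+1}$, so $\DD_jf=\widetilde{\Delta}_j\tilde f_{j,1}+\widetilde{\Delta}_j\tilde f_{j,2}$ where $\tilde f_{j,1},\tilde f_{j,2}$ are the truncations, and one sets $f_{j,i}=\PP\widetilde{\Delta}_j\tilde f_{j,i}$, $f_i=\sum_j f_{j,i}$. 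The operator $\widetilde{\Delta}_j$ costs nothing on $L^r$ norms but makes every $f_{j,i}$ frequency-localized in an annulus $|\xi|\sim2^j$, so that $\DD_{j'}f_i$ picks up contributions only from $|j-j'|\le2$ and the Besov norms can be read off directly from the pointwise truncation inequalities with no heat-kernel or finite-difference argument at all. You should insert that re-localization step; otherwise the estimates you outline cannot close.
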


\begin{proof}
For simplicity, we set $s_p=-1+\f3p,~s_{\tp}=-1+\f{3}{\tp}$. Then we have the following relations
\begin{align}\label{sp}
(s_{\tp}+\s)\tp-ps_p+c(\tp-p)=0,\quad 2s-ps_p+c(2-p)=0.
\end{align}
Let $\DD_j$ be dyadic operator defined by \eqref{defparaproduct},    we write
    \begin{align*}
        \DD_j f=\DD_j f\chi_{|\DD_j f|\leq 2^{jc}}+\DD_j f\chi_{|\DD_j f|>2^{jc}}{\eqdef}\tilde{f}_{j,1}+\tilde{f}_{j,2}.
    \end{align*}
It is  easy to observe that
    \begin{align*}
       & \|\tilde{f}_{j,1}\|^{\tp}_{L^{\tp}}\leq \|\tilde{f}_{j,1}\|^{p}_{L^{p}} \|\tilde{f}_{j,1}\|^{\tp-p}_{L^{\infty}}\leq 2^{jc(\tp-p)}\|\tilde{f}_{j,1}\|^{p}_{L^{p}},\\
       & \|\tilde{f}_{j,2}\|^{2}_{L^{2}}\leq \|\tilde{f}_{j,2}\|^{p}_{L^{p}} \|\tilde{f}^{-1}_{j,2}\chi_{\tilde{f}_{j,2}\neq0}\|^{p-2}_{L^{\infty}}\leq 2^{jc(2-p)}\|\tilde{f}_{j,2}\|^{p}_{L^{p}}.
    \end{align*}
  As $f\in \dot{B}^{s_p}_{p,\infty}$, for $i\in \{1,2\}$ we have
    \begin{align*}
        \|\tilde{f}_{j,i}\|_{L^{p}}\leq 2^{-js_p}\|f\|_{\dot{B}^{s_p}_{p,\infty}} ,
    \end{align*}
    which implies that
    \begin{align*}
      \|\tilde{f}_{j,1}\|^{\tp}_{L^{\tp}}\leq 2^{jc(\tp-p)}2^{-jps_p}\|f\|^{p}_{\dot{B}^{s_p}_{p,\infty}},\qquad
        \|\tilde{f}_{j,2}\|^{2}_{L^{2}}\leq 2^{jc(2-p)}2^{-jps_p}\|f\|^{p}_{\dot{B}^{s_p}_{p,\infty}}.
    \end{align*}

    Next it is well known that for any $f\in \dot{B}^{s_p}_{p,\infty}(\R^3)$,  $\sum_{j=-m}^m\DD_j f$ converges to $f$ in the sense of tempered distributions. Furthermore, using the fact $\DD_{j'}\DD_j f=0$ for $|j-j'|>1$, we can write
    \begin{align*}
    \DD_j f=\widetilde{\Delta}_j\DD_j f=\widetilde{\Delta}_j \tilde{f}_{j,1}+\widetilde{\Delta}_j \tilde{f}_{j,2}\with \widetilde{\Delta}_j{\eqdef}\DD_{j-1}+\DD_j+\DD_{j+1}.
    \end{align*}
    Then for $i\in \{1,2\}$, we set
    \begin{align*}
    f_{j,i}{\eqdef}\PP \widetilde{\Delta}_j\tilde{f}_{j,i}, \quad
     f_1{\eqdef} \sum_{j\in\Z} f_{j,1},\quad f_2{\eqdef}\sum_{j\in\Z}  f_{j,2}.
    \end{align*}
   Note that the Leray projector is a continuous linear operator on the homogeneous Besov spaces, then  we deduce from \eqref{sp} that
    \begin{align*}
        &\|f_1\|^{\tp}_{\dot{B}^{s_{\tp}+\sigma}_{\tp,\infty}}= \sup_{j'\in \Z}2^{j'(s_{\tp}+\s)\tp}\|\DD_{j'}f_1\|^{\tp}_{L^{\tp}}\leq C\sup_{j\in \Z}2^{j(s_{\tp}+\s)\tp}\|f_{j,1}\|^{\tp}_{L^{\tp}}
        \leq C\|f\|^{p}_{\dot{B}^{s_p}_{p,\infty}},\\
        &\|f_2\|^{2}_{\dot{B}^{s}_{2,\infty}}= \sup_{j'\in \Z}2^{2j's}\|\DD_{j'}f_2\|^{2}_{L^{2}}\leq C\sup_{j\in \Z}2^{2js}\|f_{j,2}\|^{2}_{L^{2}}
        \leq C\|f\|^{p}_{\dot{B}^{s_p}_{p,\infty}}.
    \end{align*}

    This completes the  proof of Corollary \ref{cor.decomposition}.      \end{proof}

\subsection{Subcritical estimates}\label{Subsec.3.2}

As $ m_0\in \BP$,  by Corollary \ref{cor.decomposition}, we decompose it as
\begin{align*}
 m_0=({m_0})_{1}&+({m_0})_{2} \with ({m_0})_{1}\in \BPS\cap \dot{B}^{-1+\f3p}_{p,\infty}, \quad ({m_0})_{2}\in \dot{B}^s_{2,\infty}\cap \dot{B}^{-1+\f3p}_{p,\infty},\\
\text{~where~}\quad ({m_0})_{1}&=\dis\sum_{j\in \Z} \PP \widetilde{\D}_j\Bigl((\DD_j m_0)\chi_{|\DD_j  m_0|\leq2^{jc}}\Bigr){\eqdef}\dis\sum_{j\in \Z}({m_0})_{j,1},\\
({m_0})_{2}&=\dis\sum_{j\in \Z} \PP \widetilde{\D}_j\Bigl((\DD_j m_0)\chi_{|\DD_j m_0|>2^{jc}}\Bigr){\eqdef}\dis\sum_{j\in \Z}({m_0})_{j,2}\nonumber.
\end{align*}
Furthermore, we have the following estimates
\begin{align}
    &\|(m_0)_1\|^{\tp}_{\BPS}\leq C\|m_0\|^p_{\BP},\quad \|(m_0)_2\|^{2}_{\dot{B}^s_{2,\infty}}\leq C\|m_0\|^p_{\BP},\nonumber\\
    &\|(m_0)_1\|_{\BP}+\|(m_0)_2\|_{\BP}\leq C\|m_0\|_{\BP}.\label{m_0,1_m_0,2}
\end{align}
Let us denote
\begin{align}\label{def.rho_0u_0N}
m_{0,N}{\eqdef} m_0-\sum_{j=N}^{\infty} ({m_0})_{j,2}.
\end{align}
 %As $({m_0})_2\in \dot{B}^s_{2,\infty}$, 
 Then we have
\begin{align}
    \|m_{0,N}\|_{\BPS}&=\|({m_0})_{1}+\sum_{j=-\infty}^{N-1} ({m_0})_{j,2}\|_{\BPS}\nonumber\\
    &\leq \|(m_0)_1\|_{\BPS}+C\sup_{j\leq N+2}2^{j(-1+\f{3}{\tp}+\sigma)}\|\DD_{j}m_0\|_{L^{\tp}}\label{estimate2.rho_0u_0N}\\
    &\leq C\|m_0\|^{\f{p}{\tp}}_{\BP}+C2^{N\sigma}\|m_0\|_{\BP}\leq C\varepsilon_0^{\f{p}{\tp}}2^{N\sigma}\nonumber.
\end{align}
Based on the above decomposition, we can provide some subcritical estimates for \eqref{INS}.
\begin{prop}\label{prop2.u}
Let $ p\in (3,\infty),~\tp\in (p,\infty),~q\in(1,\infty),~\tq\in (1,\infty),~\bq\in(\tq, \infty)$ and $\sigma\in(0,1)$ satisfy
\begin{align*}
\f3{p}+\f2{q}=1,\quad\f3{\tp}+\f2{\tq}=1, \quad \f{2}{\bq}+\s=\f{2}{\tq}.
\end{align*}
There exists a constant $\varepsilon_0>0$ such that if
\begin{align}\label{R_assumption.rho(t)}
    \|\varrho_0-1\|_{L^{\infty}}+\|n_0\|_{\BP(\R^3)}<\varepsilon_0,
\end{align}
then the following system
	\begin{equation}\label{Eq.linear_NS}
		\begin{cases}
            \pa_t\varrho+R\cdot\nabla\varrho=0,\\
			\varrho(\pa_tR+R\cdot\nabla R)-\Delta R+\nabla P=0,\\
			\dive R=0,\\
			\varrho|_{t=0}=\varrho_0, ~\PP(\varrho R)|_{t=0}=n_0\in\BP\cap \BPS(\R^3)
		\end{cases}
	\end{equation}
	admits a solution, $(\varrho,R),$
on $\R^+\times \R^3$ with the following estimate
\begin{align}\label{estimate2.u}
    \|R\|_{L^{\bq,\infty}(\R^+;L^{\tp})}+\|t^{-\f{\s}{2}}R\|_{L^{\tq,\infty}(\R^+;L^{\tp})}\leq C\|n_0\|_{\BPS},
\end{align}
where the constant $C>0$ depends only on $\|\varrho_0\|_{L^{\infty}}$.
\end{prop}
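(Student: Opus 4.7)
The plan is to mirror the argument used to prove Proposition \ref{prop.u_L^q,infty(L^p)} but at one level of extra regularity. Since $n_0\in \BP$ and $\|\varrho_0-1\|_{L^\infty}+\|n_0\|_{\BP}<\varepsilon_0$, Theorem \ref{thm.INS_Cannone-Meyer-Planchon} already furnishes a global solution $(\varrho,R)$ of \eqref{Eq.linear_NS}, and the remark following Proposition \ref{prop.u_L^q,infty(L^p)} applied with $(p_1,q_1)=(\tp,\tq)$ yields the critical control $\|R\|_{L^{\tq,\infty}(\R^+;L^{\tp})}\leq C\|n_0\|_{\dot B^{-1+3/\tp}_{\tp,\infty}}\leq C\|n_0\|_{\BP}\leq C\varepsilon_0$. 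Existence is therefore not the issue; the content of \eqref{estimate2.u} is to extract the subcritical information carried by the stronger assumption $n_0\in\BPS$.

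Following the proof of Proposition \ref{prop.u_L^q,infty(L^p)}, I set $V\eqdef\PP(\varrho R)$. Since $\dive R=0$ one has $R-V=-\PP((\varrho-1)R)$, so $\|R\|_{L^{\tp}}\lesssim\|V\|_{L^{\tp}}$ and $\|R-V\|_{L^{\tp}}\lesssim\varepsilon_0\|R\|_{L^{\tp}}$. The Duhamel formula
\begin{equation*}
V(t)=e^{t\Delta}n_0+\int_0^t e^{(t-s)\Delta}\bigl(\Delta(R-V)-\PP\dive(\varrho R\otimes R)\bigr)(s)\,\ds
\end{equation*}
then decouples into three pieces. Write $Z\eqdef\|R\|_{L^{\bq,\infty}(\R^+;L^{\tp})}$. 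For the heat linear term, the semigroup characterization of $\BPS$, combined with the identities $1/\tq=(1-3/\tp)/2$ and $1/\bq=1/\tq-\sigma/2$, yields $\|e^{t\Delta}n_0\|_{L^{\tp}}\lesssim t^{-1/\bq}\|n_0\|_{\BPS}$ and hence $\|e^{t\Delta}n_0\|_{L^{\bq,\infty}(L^{\tp})}\lesssim\|n_0\|_{\BPS}$. The density-defect term is handled by the Lorentz-valued maximal regularity of the heat equation (Lemma \ref{lem.maximal regularity}) together with the smallness of $\|\varrho-1\|_{L^\infty}$, which gives $\bigl\|\int_0^t\Delta e^{(t-s)\Delta}(R-V)\,\ds\bigr\|_{L^{\bq,\infty}(L^{\tp})}\lesssim\varepsilon_0 Z$.

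For the convective term, Hölder in Lorentz with $1/r=1/\tq+1/\bq$ produces $\|\varrho R\otimes R\|_{L^{r,\infty}(L^{\tp/2})}\lesssim\|\varrho\|_{L^\infty}\|R\|_{L^{\tq,\infty}(L^{\tp})}Z\lesssim\varepsilon_0 Z$. Coupling the pointwise bound $\|\dive e^{(t-s)\Delta}g\|_{L^{\tp}}\lesssim|t-s|^{-1/2-3/(2\tp)}\|g\|_{L^{\tp/2}}$ with O'Neil's convolution inequality (Lemma \ref{lem.convolution inequality}) and the scaling identity $1/2+3/(2\tp)+1/\tq=1$ (equivalent to $3/\tp+2/\tq=1$) places the resulting integral in $L^{\bq,\infty}(L^{\tp})$ with an $O(\varepsilon_0 Z)$ bound. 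Collecting the three contributions gives $Z\leq C\|n_0\|_{\BPS}+C\varepsilon_0 Z$, and a continuity/absorption argument (as in Proposition \ref{prop.u_L^q,infty(L^p)}) yields the first half of \eqref{estimate2.u}. For the weighted bound, Hölder for Lorentz spaces in the time variable using $t^{-\sigma/2}\in L^{2/\sigma,\infty}(\R^+)$ and the identity $\sigma/2+1/\bq=1/\tq$ gives $\|t^{-\sigma/2}R\|_{L^{\tq,\infty}(L^{\tp})}\lesssim Z\lesssim\|n_0\|_{\BPS}$.

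The main obstacle is strictly bookkeeping: one must verify that the exponents $(\tp,\tq,\bq,\sigma)$ are compatible with the scalings of each kernel (heat, $\dive$-heat, maximal regularity, and weighted time Hölder). The two assumed identities $3/\tp+2/\tq=1$ and $2/\bq+\sigma=2/\tq$ are arranged precisely so that every step closes in $L^{\bq,\infty}(L^{\tp})$; no harmonic-analytic input beyond what is developed in Section \ref{Sec.Cannone-Meyer-Planchon} is needed.
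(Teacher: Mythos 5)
Your proposal is correct and follows essentially the same route as the paper's proof: set $V=\PP(\varrho R)$, write the Duhamel formula, estimate the three pieces via the semigroup characterization of $\BPS$, Lorentz-space maximal regularity for the density-defect term, and O'Neil's convolution inequality for the convective term, then close by absorption using $\|R\|_{L^{\tq,\infty}(L^{\tp})}\lesssim\varepsilon_0$ from Proposition \ref{prop.u_L^q,infty(L^p)} and its remark. The only cosmetic difference is that you invoke Lemma \ref{lem.maximal regularity} (Stokes maximal regularity) for the $\Delta A(R-V)$ term whereas the paper records the same estimate as part of Corollary \ref{cor.convolution}/\eqref{estimate2.A_bq,tp} via heat maximal regularity, but since $R-V=-\PP((\varrho-1)R)$ is divergence-free the two coincide.
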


\begin{proof}
Firstly, Theorem \ref{thm.INS_Cannone-Meyer-Planchon} ensures the global existence of the solution for $(\varrho,R)$, which satisfies all the a priori estimates in Subsection \ref{Subsec.2.1}.

Let $W{\eqdef}\PP(\varrho R)$ with $W_0= n_0$, then $W$ satisfies
\begin{equation*}%\label{eq.W}
     \left\{
     \begin{array}{l}
     \partial_tW-\Delta W=\Delta(R-W)-\PP\dive(\varrho R\otimes R), \quad (t,x)\in \R^{+}\times\R^3\\
     \dive W=0,\\
     W|_{t=0}=W_0,
     \end{array}
     \right.
\end{equation*}
then we can write
\begin{align}\label{eq.W_mild solution}
    W(t,x)=e^{t\Delta}W_0+\int_0^t e^{(t-s)\Delta}\Bigl(\Delta(R-W)-\PP\dive(\varrho R\otimes R)\Bigr)(s,x)\,\ds.
\end{align}
Since $-1+\f{3}{\tp}+\s=-\f{2}{\bq}$, according to the equivalent definition of Besov spaces recalled in Definition \ref{defBesov}, we conclude that
    \begin{align}\label{estimate2.heat}
     \|e^{t\Delta}W_0\|_{L^{\bq,\infty}(\R^+;L^{\tp})}\leq C\|W_0\|_{\BPS}\leq C\|n_0\|_{\BPS}.
    \end{align}
    And we get, by applying Lemma \ref{lem.convolution inequality}, that for any $T>0$, %from Corollary \ref{cor.convolution} that for any $T>0$, 
    \begin{align}\label{estimate2.A_bq,tp}
        \|\nabla A(fg)\|_{L^{\bq,\infty}_T(L^{\tp})}\leq C \|f\|_{L^{\bq,\infty}_T(L^{\tp})}\|g\|_{L^{\tq,\infty}_T(L^{\tp})},\quad
        \|\D Af\|_{L^{\bq,\infty}_T(L^{\tp})}\leq C \|f\|_{L^{\bq,\infty}_T(L^{\tp})},
    \end{align}
    where $C$ is independent of $T$.
    By substituting \eqref{estimate2.heat} and \eqref{estimate2.A_bq,tp} into \eqref{eq.W_mild solution}, we get
    \begin{align*}
        \|W\|_{L^{\bq,\infty}_T(L^{\tp})}\lesssim \|W_{0}\|_{\BPS}+\|R-W\|_{L^{\bq,\infty}_T(L^{\tp})}+\|R\|_{L^{\bq,\infty}_T(L^{\tp})}\|R\|_{L^{\tq,\infty}_T(L^{\tp})}.
    \end{align*}
    Then similar to \eqref{estimate.u_v control} and utilizing the estimate \eqref{estimate.u_L^tilde q,infty(L^tilde p)}, we ultimately derive
    \begin{align*}
    \|R\|_{L^{\bq,\infty}(\R^+;L^{\tp})}\leq C\|W\|_{L^{\bq,\infty}(\R^+;L^{\tp})}\leq C \|W_{0}\|_{\BPS}\leq C\|n_0\|_{\BPS}.
    \end{align*}
    As $\f{2}{\bq}+\s=\f{2}{\tq}$, we can further deduce
\begin{align*}
\|t^{-\f{\s}{2}}R\|_{L^{\tq,\infty}(\R^+;L^{\tp})}\leq \|t^{-\f{\s}{2}}\|_{L^{\f{2}{\s},\infty}(\R^+)}\|R\|_{L^{\bq,\infty}(\R^+;L^{\tp})}\leq C\|n_0\|_{\BPS}.
\end{align*}

    This completes the proof of Proposition \ref{prop2.u}.
\end{proof}

\begin{prop}\label{prop2.tu}
Under the assumptions of Proposition \ref{prop2.u}, we have
\begin{align}\label{estimate2.tu}
    \|tR\|_{L^{\infty}(\R^+;\dot{B}_{\tp,\infty}^{1+\f{3}{\tp}+\sigma})}+\bigl\|\bigl(tR_t,t\dot{R},(tR)_t,t\nabla^2R,t\nabla P\bigr)\bigr\|_{L^{\bq,\infty}(\R^+;L^{\tp})}\leq C\|n_0\|_{\BPS},
\end{align}
where $\dot{R}=\p_tR+R\cdot\nabla R$.
Moreover, there holds
\begin{align}\label{decay2.nabla u}
\sup_{t\in (0,\infty)}t^{1-\f{\sigma}{2}}\|\nabla R(t)\|_{L^{\infty}}\leq C\|n_0\|_{\BPS}.
\end{align}
\end{prop}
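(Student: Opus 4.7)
The plan is to imitate the scheme of Propositions \ref{prop.tu_t L^q,infty(L^p)} and \ref{prop.t^{1+1/2q}u L^2q,infty(L^p)} in the subcritical scale adapted to $(\tilde p,\bar q,\sigma)$. Multiplying the momentum equation in \eqref{Eq.linear_NS} by $t$ produces
\begin{equation*}
(tR)_t-\Delta(tR)+\nabla(tP)=(1-\varrho)(tR)_t+\varrho R-t\varrho R\cdot\nabla R.
\end{equation*}
The key observation is that the scaling relations $3/\tilde p+2/\tilde q=1$ and $2/\bar q+\sigma=2/\tilde q$ give $-2/\bar q=-1+3/\tilde p+\sigma$, so that $\dot B^{1+3/\tilde p+\sigma}_{\tilde p,\infty}$ is exactly the trace space matched with $L^{\bar q,\infty}(L^{\tilde p})$ for the maximal regularity statement of Lemma \ref{lem.maximal regularity}. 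Applying that lemma to the equation above yields
\begin{equation*}
\|tR\|_{L^\infty(\dot B^{1+3/\tilde p+\sigma}_{\tilde p,\infty})}+\bigl\|\bigl((tR)_t,t\nabla^2R,t\nabla P\bigr)\bigr\|_{L^{\bar q,\infty}(L^{\tilde p})}\lesssim\|(1-\varrho)(tR)_t\|_{L^{\bar q,\infty}(L^{\tilde p})}+\|\varrho R\|_{L^{\bar q,\infty}(L^{\tilde p})}+\|t\varrho R\cdot\nabla R\|_{L^{\bar q,\infty}(L^{\tilde p})}.
\end{equation*}
The first term is absorbed by the smallness of $\|\varrho_0-1\|_{L^\infty}$ coming from \eqref{R_assumption.rho(t)}, and the second is exactly $\|\varrho R\|_{L^{\bar q,\infty}(L^{\tilde p})}\lesssim\|n_0\|_{\BPS}$ by Proposition \ref{prop2.u}.

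For the nonlinear term, the plan is to combine the interpolation inequalities
\begin{equation*}
\|R\|_{L^\infty}\lesssim\|R\|_{L^{\tilde p}}^{\theta_1}\|R\|_{\dot B^{1+3/\tilde p+\sigma}_{\tilde p,\infty}}^{1-\theta_1},\qquad\|\nabla R\|_{L^{\tilde p}}\lesssim\|R\|_{L^{\tilde p}}^{\theta_2}\|R\|_{\dot B^{1+3/\tilde p+\sigma}_{\tilde p,\infty}}^{1-\theta_2}
\end{equation*}
(with $\theta_i$ dictated by the scaling) together with H\"older in Lorentz to split
\begin{equation*}
\|t\varrho R\cdot\nabla R\|_{L^{\bar q,\infty}(L^{\tilde p})}\lesssim\|R\|_{L^{\tilde q,\infty}(L^{\tilde p})}\,\|tR\|_{L^\infty(\dot B^{1+3/\tilde p+\sigma}_{\tilde p,\infty})},
\end{equation*}
where the Lorentz exponents balance via $1/\bar q=1/\tilde q+\sigma/2$. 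The factor $\|R\|_{L^{\tilde q,\infty}(L^{\tilde p})}$ is of critical type and hence controlled by $\|n_0\|_{\BP}\leq\varepsilon_0$ through \eqref{estimate.u_L^tilde q,infty(L^tilde p)}, so it is \emph{small}. Choosing $\varepsilon_0$ small enough therefore allows the nonlinear term to be absorbed as well, delivering \eqref{estimate2.tu} for the triple $((tR)_t,t\nabla^2R,t\nabla P)$. The remaining two norms $tR_t$ and $t\dot R$ are handled via the algebraic identities $tR_t=(tR)_t-R$ and $t\dot R=(tR)_t-R+tR\cdot\nabla R$, each new piece being controlled by what we already have.

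Finally, for the pointwise-in-time decay \eqref{decay2.nabla u}, I would first deduce a pointwise bound on $\|R(t)\|_{L^{\tilde p}}$ via
\begin{equation*}
tR(t)=\int_0^t(sR)_s\,\ds,
\end{equation*}
and H\"older in Lorentz (as in Proposition \ref{prop.tu_t L^q,infty(L^p)}), yielding $t^{1/\bar q}\|R(t)\|_{L^{\tilde p}}\lesssim\|n_0\|_{\BPS}$, i.e.\ $\|R(t)\|_{L^{\tilde p}}\lesssim t^{-1/\bar q}\|n_0\|_{\BPS}$ with $1/\bar q=1/\tilde q-\sigma/2$. Interpolating between this and $\|tR(t)\|_{\dot B^{1+3/\tilde p+\sigma}_{\tilde p,\infty}}\lesssim\|n_0\|_{\BPS}$ and matching scaling dimensions gives $\|\nabla R(t)\|_{L^\infty}\lesssim t^{-1+\sigma/2}\|n_0\|_{\BPS}$, precisely \eqref{decay2.nabla u}. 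The main obstacle here is the index bookkeeping: one has to verify that the Lorentz exponents and the interpolation weights dovetail with $3/\tilde p+2/\bar q+\sigma=1$ so that the nonlinear estimate is really absorbable and the final weight in \eqref{decay2.nabla u} carries the scaling dimension $1-\sigma/2$ rather than just $1$.
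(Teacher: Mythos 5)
The overall plan (apply Lemma \ref{lem.maximal regularity} to the equation for $tR$, absorb the $(1-\varrho)$ term by smallness, control $\varrho R$ by Proposition \ref{prop2.u}, recover $tR_t$ and $t\dot R$ algebraically, then deduce \eqref{decay2.nabla u} from $tR(t)=\int_0^t(sR)_s\,\ds$ plus interpolation) matches the paper. However, your estimate of the nonlinear term is wrong and cannot be repaired as stated. You claim
\begin{equation*}
\|t\varrho R\cdot\nabla R\|_{L^{\bar q,\infty}(L^{\tilde p})}\lesssim\|R\|_{L^{\tilde q,\infty}(L^{\tilde p})}\,\|tR\|_{L^{\infty}(\dot{B}^{1+3/\tilde p+\sigma}_{\tilde p,\infty})},
\end{equation*}
\textquotedblleft balancing Lorentz exponents via $1/\bar q=1/\tilde q+\sigma/2$.\textquotedblright\ First, the sign is wrong: the hypotheses say $2/\bar q+\sigma=2/\tilde q$, i.e.\ $1/\bar q=1/\tilde q-\sigma/2$, so $\bar q>\tilde q$. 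Second and more seriously, H\"older in time gives $L^{\tilde q,\infty}\cdot L^\infty=L^{\tilde q,\infty}$, and on $(0,\infty)$ there is no embedding $L^{\tilde q,\infty}\hookrightarrow L^{\bar q,\infty}$ when $\bar q>\tilde q$, so the left side is simply not controlled by the right side. Third, even ignoring the Lorentz bookkeeping, interpolating $\|R\|_{L^\infty}$ and $\|\nabla R\|_{L^{\tilde p}}$ between $L^{\tilde p}$ and the \emph{subcritical} space $\dot B^{1+3/\tilde p+\sigma}_{\tilde p,\infty}$ gives $\|R\cdot\nabla R\|_{L^{\tilde p}}\lesssim\|R\|_{L^{\tilde p}}^{a}\|R\|_{\dot B^{1+3/\tilde p+\sigma}_{\tilde p,\infty}}^{b}$ with $a>1>b$ and $a+b=2$; then $t\|R\cdot\nabla R\|_{L^{\tilde p}}\lesssim\|R\|_{L^{\tilde p}}^a(t\|R\|_{\dot B})^bt^{1-b}$ leaves an unbounded factor $t^{1-b}$, with $1-b=\sigma/(1+3/\tilde p+\sigma)>0$. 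Your factorization puts the small (critical) factor at the wrong Lorentz exponent and the subcritical factor in $L^\infty_t$; this is what breaks the scaling accounting.

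The paper instead keeps the critical quantity as the $L^\infty_t$ factor and the subcritical quantity as the $L^{\bar q,\infty}_t$ factor: using the already-established decay $\sup_t\, t\|\nabla R(t)\|_{L^\infty}\leq C\|n_0\|_{\BP}\leq C\varepsilon_0$ from \eqref{decay.nabla u_L^infty}, one simply writes
\begin{equation*}
\|t\varrho R\cdot\nabla R\|_{L^{\bar q,\infty}(L^{\tilde p})}
\leq\bigl(t\|R(t)\|_{L^{\tilde p}}\|\nabla R(t)\|_{L^\infty}\bigr)_{L^{\bar q,\infty}}
\leq C\varepsilon_0\|R\|_{L^{\bar q,\infty}(L^{\tilde p})},
\end{equation*}
which uses $L^{\tilde p}\cdot L^\infty\to L^{\tilde p}$ in space and $L^{\bar q,\infty}\cdot L^\infty\to L^{\bar q,\infty}$ in time, with the small factor being critical in scaling. (Alternatively, if you insist on an interpolation argument, you must interpolate against the \emph{critical} Besov norm $\dot B^{1+3/\tilde p}_{\tilde p,\infty}$, giving the balanced $\|R\cdot\nabla R\|_{L^{\tilde p}}\lesssim\|R\|_{L^{\tilde p}}\|R\|_{\dot B^{1+3/\tilde p}_{\tilde p,\infty}}$, put $\|R\|_{L^{\tilde p}}$ in $L^{\bar q,\infty}_t$ and $\|tR\|_{\dot B^{1+3/\tilde p}_{\tilde p,\infty}}$ in $L^\infty_t$; the latter is small by the critical Proposition \ref{prop.tu_t L^q,infty(L^p)} with $p$ replaced by $\tilde p$.) Your final paragraph on \eqref{decay2.nabla u} is essentially correct, though it depends on the triple bound in \eqref{estimate2.tu} just argued, so the gap in the nonlinear estimate propagates.
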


\begin{proof}
    Thanks to \eqref{Eq.linear_NS}, we have
    \begin{align*}
        (tR)_t-\Delta(tR)+\nabla(tP)=(1-\varrho)(tR)_t+\varrho R-t\varrho R\cdot\nabla R.
    \end{align*}
    Then we get, by using Lemma \ref{lem.maximal regularity}, that
    \begin{align*}
    &\|tR\|_{L^{\infty}(\R^+;\dot{B}_{\tp,\infty}^{1+\f{3}{\tp}+\sigma})}+\|((tR)_t, t\nabla^2R, t\nabla P)\|_{L^{\bq,\infty}(\R^+;L^{\tp})}\\
    & \lesssim \|(1-\varrho)(tR)_t\|_{L^{\bq,\infty}(\R^+;L^{\tp})}+\|\varrho R\|_{L^{\bq,\infty}(\R^+;L^{\tp})}+\|t\varrho R\cdot\nabla R\|_{L^{\bq,\infty}(\R^+;L^{\tp})}\\
    & \lesssim \|\varrho_0-1\|_{L^{\infty}}\|(tR)_t\|_{L^{\bq,\infty}(\R^+;L^{\tp})}+\|\varrho_0\|_{L^{\infty}} \bigl(\|R\|_{L^{\bq,\infty}(\R^+;L^{\tp})}+\|t R\cdot\nabla R\|_{L^{\bq,\infty}(\R^+;L^{\tp})}\bigr).
    \end{align*}
Thanks to \eqref{R_assumption.rho(t)}, the first term of the right hand side can be absorbed by the left hand side.
By \eqref{decay.nabla u_L^infty}, we have
    \begin{align*}
     \|t R\cdot\nabla R\|_{L^{\bq,\infty}(\R^+;L^{\tp})}\leq \bigl(t \|R(t)\|_{L^{\tp}}\|\nabla R(t)\|_{L^{\infty}}\bigr)_{L^{\bq,\infty}(\R^+)}\leq C \varepsilon_0\|R\|_{L^{\bq,\infty}(\R^+;L^{\tp})}.
    \end{align*}
    It follows from \eqref{estimate2.u} that
    \begin{align*}
    \|tR\|_{L^{\infty}(\R^+;\dot{B}_{\tp,\infty}^{1+\f{3}{\tp}+\sigma})}+\|((tR)_t,t\nabla^2R,t\nabla P)\|_{L^{\bq,\infty}(\R^+;L^{\tp})}
    \leq C\|n_0\|_{\BPS}.
    \end{align*}
    As $tR_t=(tR)_t-R$, $t\dot{R}=(tR)_t-R+tR\cdot\nabla R$, we get
    \begin{align*}
     \|(t{R}_t, t\dot{R})\|_{L^{\bq,\infty}(\R^+;L^{\tp})}\leq 2\|(tR)_t\|_{L^{\bq,\infty}(\R^+;L^{\tp})}+2\|R\|_{L^{\bq,\infty}(\R^+;L^{\tp})}+\|tR\cdot\nabla R\|_{L^{\bq,\infty}(\R^+;L^{\tp})},
    \end{align*}
    which implies \eqref{estimate2.tu}. We also have
\begin{align*}
t\|R(t)\|_{L^{\tp}}\leq \int_0^t \|(\tau R)_\tau\|_{L^{\tp}}\,\mathrm d\tau \leq Ct^{1-\f{1}{\bq}}\|n_0\|_{\BPS}= Ct^{\f12+\f{3}{2\tp}+\f{\sigma}{2}}\|n_0\|_{\BPS},
\end{align*}
and by applying the interpolation inequality, we can further deduce that
 \begin{align*}
        \|\nabla R(t)\|_{L^{\infty}}\lesssim \| R(t)\|^{1-\theta}_{L^{\tp}}\|R(t)\|^{\theta}_{\dot{B}_{\tp,\infty}^{1+\f{3}{\tp}+\sigma}}\leq Ct^{-1+\f{\sigma}{2}}\|n_0\|_{\BPS},\text{~where~} \theta=\f{\tp+3}{\tp+3+\tp \s}.
    \end{align*}

    This completes the proof of Proposition \ref{prop2.tu}.
    \end{proof}

\begin{prop}\label{prop2.tDelat dotu}
Under the assumptions of Proposition \ref{prop2.u}, we have
\begin{align}\label{estimate2.tDelta dotu}
    \|t^2\dot{R}\|_{L^{\infty}(\R^+;\dot{B}^{1+\f{3}{\tp}+\sigma}_{\tp,\infty})}+\|((t^2\dot{R})_t,t^2\nabla^2\dot{R})\|_{L^{\bq,\infty}(\R^+;L^{\tp})}\leq C\|n_0\|_{\BPS}.
\end{align}
Moreover, there holds
\begin{align}\label{estimate2.t nabla dotu}
\|t\nabla\dot{R}\|_{L^1(0,T;L^{\infty})}+\|t\dot{R}\|_{L^2(0,T;L^{\infty})}\leq CT^{\f{\sigma}{2}}\|n_0\|_{\BPS}, \quad\forall~ T>0.
\end{align}
\end{prop}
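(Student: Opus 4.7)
The plan is to mimic the derivation of Proposition \ref{prop.t^2dotu L^q,infty(L^p)} but in the subcritical Besov--Lorentz framework of Proposition \ref{prop2.u}--\ref{prop2.tu}. Starting from \eqref{eq.dot u} with $(u,P)$ replaced by $(R,P)$, multiplying by $t^2$ and handling the pressure via $\nabla(t^2\dot P)=\Q(\cdots)$ together with the identity $\dive\dot R=\mathrm{Tr}(\nabla R\cdot\nabla R)$, I would obtain, in analogy with \eqref{eq.t^2dotu},
\begin{align*}
(t^2\dot R)_t-\Delta(t^2\dot R) & =\PP\bigl[(1-\varrho)(t^2\dot R)_t-t^2\varrho R\cdot\nabla\dot R+2t\varrho\dot R+t^2f\bigr]\\
& \qquad+\Q\bigl(2t\dot R+2t^2R_t\cdot\nabla R\bigr)-t^2\nabla\mathrm{Tr}(\nabla R\cdot\nabla R),
\end{align*}
where $f=-\Delta R\cdot\nabla R-2\nabla R\cdot\nabla^2 R+\nabla R\cdot\nabla P$.

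Next I would apply the maximal regularity estimate (Lemma \ref{lem.maximal regularity}) in $L^{\bq,\infty}(\R^+;L^{\tp})$ to this heat equation. The term $\PP[(1-\varrho)(t^2\dot R)_t]$ is absorbed into the left-hand side thanks to \eqref{R_assumption.rho(t)}. The quadratic term $t^2\varrho R\cdot\nabla\dot R$ is controlled using the $L^\infty$ decay of $R$ coming from \eqref{estimate2.u}--\eqref{estimate2.tu} together with interpolation $\|\nabla\dot R\|_{L^{\tp}}\lesssim\|\dot R\|_{L^{\tp}}^{1/2}\|\nabla^2\dot R\|_{L^{\tp}}^{1/2}$, and the resulting contribution $\|m_0\|$-type factor allows a Young-type absorption of $\|t^2\nabla^2\dot R\|_{L^{\bq,\infty}(L^{\tp})}$ into the left. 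The remaining nonlinear pieces $t^2f$, $t^2\nabla\mathrm{Tr}(\nabla R\cdot\nabla R)$, $t^2R_t\cdot\nabla R$ are all of the form $t\nabla R\cdot(\text{second order})$ and I would estimate them by Hölder in Lorentz spaces as
\[
\lesssim\|t\nabla R\|_{L^\infty_t L^\infty}\,\|(t\nabla^2R,\,t\nabla P,\,tR_t)\|_{L^{\bq,\infty}_t L^{\tp}},
\]
invoking the decay $t^{1-\sigma/2}\|\nabla R\|_{L^\infty}\lesssim\|n_0\|_{\BPS}$ from \eqref{decay2.nabla u} (the extra $t^{\sigma/2}$ gets paired with a factor from the Lorentz integrability of $t^{-\sigma/2}$ on bounded intervals, or is absorbed since the maximal regularity time-norm is scale invariant here). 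This yields the first inequality \eqref{estimate2.tDelta dotu}.

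For the second inequality \eqref{estimate2.t nabla dotu}, I would interpolate between $L^{\tp}$ and $\dot B^{1+3/\tp+\sigma}_{\tp,\infty}$: with $\theta_1=(1+3/\tp)/(1+6/\tp+\sigma)$ and $\theta_2=(3/\tp)/(1+6/\tp+\sigma)$,
\[
\|\nabla\dot R\|_{L^\infty}\lesssim\|\dot R\|_{L^{\tp}}^{1-\theta_1}\|\dot R\|_{\dot B^{1+3/\tp+\sigma}_{\tp,\infty}}^{\theta_1},\qquad\|\dot R\|_{L^\infty}\lesssim\|\dot R\|_{L^{\tp}}^{1-\theta_2}\|\dot R\|_{\dot B^{1+3/\tp+\sigma}_{\tp,\infty}}^{\theta_2}.
\]
Plugging in the $L^\infty_t$ bound on $t^2\dot R$ in $\dot B^{1+3/\tp+\sigma}_{\tp,\infty}$ from \eqref{estimate2.tDelta dotu} and the bound on $t\dot R$ in $L^{\bq,\infty}_tL^{\tp}$ from Proposition \ref{prop2.tu}, Hölder's inequality in time on $(0,T)$ together with a direct computation of the exponents (checking that the integrability power is sub-critical by exactly $\sigma/2$) produces the $CT^{\sigma/2}\|n_0\|_{\BPS}$ bound for both $\|t\nabla\dot R\|_{L^1_T(L^\infty)}$ and $\|t\dot R\|_{L^2_T(L^\infty)}$.

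The main technical obstacle is the bookkeeping of time-weights and Lorentz exponents in step two: one has to verify that the terms $t^2\varrho R\cdot\nabla\dot R$ and $t^2f$ really split into a product of quantities each controlled by Propositions \ref{prop2.u}--\ref{prop2.tu} and the Lorentz Hölder indices add up to $1/\bq$, and that the power $\|n_0\|_{\BPS}$ (and not $\|n_0\|_{\BPS}^2$) appears on the right, which forces using the smallness $\|n_0\|_{\BPS}\lesssim\varepsilon_0^{p/\tp}2^{N\sigma}$ only once per nonlinear term while pulling the second copy into a uniform bound. Once this is done correctly, Proposition \ref{prop2.tDelat dotu} follows.
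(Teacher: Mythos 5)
Your proof of \eqref{estimate2.tDelta dotu} follows exactly the paper's scheme (the subcritical analogue of Proposition \ref{prop.t^2dotu L^q,infty(L^p)}: same rewriting of the pressure, same absorption of $(1-\varrho)(t^2\dot R)_t$, same interpolation-and-Young treatment of $t^2\varrho R\cdot\nabla\dot R$, same Hölder bound for $t^2g$, $t^2\nabla\mathrm{Tr}(\nabla R\cdot\nabla R)$, $t^2 R_t\cdot\nabla R$), so that part is fine.

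For \eqref{estimate2.t nabla dotu} you take a genuinely different route. The paper interpolates in the Sobolev scale, using the quantities that maximal regularity actually delivers:
\[
\|\nabla\dot R\|_{L^\infty}\lesssim\|\dot R\|_{L^{\tp}}^{\f12-\f{3}{2\tp}}\|\nabla^2\dot R\|_{L^{\tp}}^{\f12+\f{3}{2\tp}},\qquad
\|\dot R\|_{L^\infty}^2\lesssim\|\dot R\|_{L^{\tp}}^{2-\f{3}{\tp}}\|\nabla^2\dot R\|_{L^{\tp}}^{\f{3}{\tp}},
\]
and then applies Lorentz Hölder in time against $\|t\dot R\|_{L^{\bq,\infty}(L^{\tp})}$ and $\|t^2\nabla^2\dot R\|_{L^{\bq,\infty}(L^{\tp})}$, picking up exactly $T^{\sigma/2}$. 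You instead interpolate against the Besov norm $\|t^2\dot R\|_{L^\infty(\dot B^{1+3/\tp+\sigma}_{\tp,\infty})}$. That alternative can also be made to work, but your exponents are wrong: you write $\theta_1=(1+3/\tp)/(1+6/\tp+\sigma)$ and $\theta_2=(3/\tp)/(1+6/\tp+\sigma)$, whereas a scaling check forces
\[
\theta_1=\frac{1+3/\tp}{1+3/\tp+\sigma},\qquad \theta_2=\frac{3/\tp}{1+3/\tp+\sigma}.
\]
With your denominator $1+6/\tp+\sigma$ the interpolation inequalities are dimensionally inconsistent and hence false for general $\dot R$ (set $\dot R_\lambda(x)=\dot R(\lambda x)$ and compare homogeneities: they do not match). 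Consequently the subsequent Hölder-in-time bookkeeping cannot produce $T^{\sigma/2}$. Once the exponents are corrected to $(1+3/\tp)/(1+3/\tp+\sigma)$ and $(3/\tp)/(1+3/\tp+\sigma)$, the computation does close, using $1-\f{1}{\bar q}=\f12(1+\f3{\tp}+\sigma)$ to recover $T^{\sigma/2}$; but as written the step contains a real error, not just a typo, since it changes the integrability calculation. Either fix the exponents in your interpolation, or follow the paper and interpolate against $\|\nabla^2\dot R\|_{L^{\tp}}$ directly, which sidesteps the issue.
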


\begin{proof}
    Firstly, similar to \eqref{eq.t^2dotu} we have
    \begin{align*}
     (t^2\dot{R})_t-\Delta(t^2\dot{R})=\PP[(1-\varrho)(t^2\dot{R})_t&-t^2\varrho R\cdot\nabla\dot{R}+2t\varrho\dot{R}+t^2g]\\
     &+\Q(2t\dot{R}+2t^2R_t\cdot\nabla R)-t^2\nabla\mathrm{Tr}(\nabla R\cdot \nabla R),
    \end{align*}
    where $g=-\D R\cdot \nabla R-2\nabla R\cdot \nabla^2R+\nabla R\cdot \nabla P$.
     Then by the maximal regularity of the heat equation, we get
    \begin{align*}
    &\|t^2\dot{R}\|_{L^{\infty}(\R^+;\dot{B}_{\tp,\infty}^{1+\f{3}{p}+\s})}+\|((t^2\dot{R})_t, t^2\nabla^2\dot{R})\|_{L^{\bq,\infty}(\R^+;L^{\tp})}\\
    &\quad\leq C\|(\varrho-1)(t^2\dot{R})_t\|_{L^{\bq,\infty}(\R^+;L^{\tp})}+C\|t^2\varrho R\cdot\nabla\dot{R}-2t\varrho\dot{R}-t^2g\|_{L^{\bq,\infty}(\R^+;L^{\tp})}\\
    &\qquad+C\|t\dot{R}+t^2R_t\cdot\nabla R\|_{L^{\bq,\infty}(\R^+;L^{\tp})}+C\|t^2\nabla\mathrm{Tr}(\nabla R\cdot \nabla R)\|_{L^{\bq,\infty}(\R^+;L^{\tp})}.
\end{align*}
Thanks to \eqref{R_assumption.rho(t)}, the first term of the right hand side can be absorbed by the left hand side.
By \eqref{decay.u_L^p} and the interpolation inequality, we obtain
\begin{align*}
 &C\|t^2\varrho R\cdot\nabla\dot{R}\|_{L^{\bq,\infty}(\R^+;L^{\tp})}\leq C\|\varrho_0\|_{L^{\infty}} \bigl(t^2\|R\|_{L^{\infty}}\|\nabla\dot{R}\|_{L^{\tp}}\bigr)_{L^{\bq,\infty}(\R^+)}\\
 &\leq C \|n_0\|_{\BP}\bigl(t^{\f32}\|\dot{R}\|^{\f12}_{L^{\tp}}\|\nabla^2\dot{R}\|^{\f12}_{L^{\tp}}\bigr)_{L^{\bq,\infty}(\R^+)}\\
 &\leq C \|n_0\|_{\BP}\|t\dot{R}\|^{\f12}_{L^{\bq,\infty}(\R^+;L^{\tp})}\|t^2\nabla^2\dot{R}\|^{\f12}_{L^{\bq,\infty}(\R^+;L^{\tp})}\\
 &\leq \f14\|t^2\nabla^2\dot{R}\|_{L^{\bq,\infty}(\R^+;L^{\tp})}+C\|n_0\|^{2}_{\BP}\|t\dot{R}\|_{L^{\bq,\infty}(\R^+;L^{\tp})}.
\end{align*}
And by \eqref{decay.nabla u_L^infty}, we achieve
\begin{align*}
   &\|t^2g\|_{L^{\bq,\infty}(\R^+;L^{\tp})}+\|t^2\nabla\mathrm{Tr}(\nabla R\cdot \nabla R)\|_{L^{\bq,\infty}(\R^+;L^{\tp})}+\|t^2R_t\cdot\nabla R\|_{L^{\bq,\infty}(\R^+;L^{\tp})}\\
   &\leq C\|t\nabla R\|_{L^{\infty}(\R^+;L^{\infty})}\|(tR_t,t\nabla^2R,t\nabla P)\|_{L^{\bq,\infty}(\R^+;L^{\tp})}\leq C\varepsilon_0\|(tR_t,t\nabla^2R,t\nabla P)\|_{L^{\bq,\infty}(\R^+;L^{\tp})}.
\end{align*}
Then, putting all together with Proposition \ref{prop2.tu}, we can get \eqref{estimate2.tDelta dotu}.
Finally, by the interpolation inequality, we obtain
\begin{align}\label{estimate2.dot u infty}
   \|\nabla \dot{R}\|_{L^{\infty}}\lesssim \|\dot{R}\|^{\f12-\f{3}{2\tp}}_{L^{\tp}}\|\nabla^2\dot{R}\|^{\f12+\f{3}{2\tp}}_{L^{\tp}} \andf \|\dot{R}\|^2_{L^{\infty}}\lesssim \|\dot{R}\|^{2-\f{3}{\tp}}_{L^{\tp}}\|\nabla^2\dot{R}\|^{\f{3}{\tp}}_{L^{\tp}},
\end{align}
which gives
\begin{align*}
\int_0^T \|t\nabla\dot{R}\|_{L^{\infty}}\,\dt&\lesssim \|t\dot{R}\|^{\f12-\f{3}{2\tp}}_{L^{\bq,\infty}(0,T;L^{\tp})}\|t^2\nabla^2\dot{R}\|^{\f12+\f{3}{2\tp}}_{L^{\bq,\infty}(0,T;L^{\tp})} \|t^{-(\f12+\f{3}{2\tp})}\|_{L^{\f{\bq}{\bq-1},1}(0,T)}\\
&\leq CT^{\f{\s}{2}} \|n_0\|_{\BPS},
\end{align*}
and
\begin{align*}
\int_0^T \|t\dot{R}\|^2_{L^{\infty}}\,\dt &\lesssim \|t\dot{R}\|^{2-\f{3}{\tp}}_{L^{\bq,\infty}(0,T;L^{\tp})}\|t^2\nabla^2\dot{R}\|^{\f{3}{\tp}}_{L^{\bq,\infty}(0,T;L^{\tp})} \|t^{-\f{3}{\tp}}\|_{L^{\f{\bq}{\bq-2},1}(0,T)}
\leq CT^{\sigma} \|n_0\|^2_{\BPS}.
\end{align*}

This completes the proof of Proposition \ref{prop2.tDelat dotu}.
\end{proof}

\subsection{Proof of weak-strong uniqueness}\label{Subsec.3.3}

\begin{proof}[Proof of Theorem \ref{thm.weak-strong uniqueness}]

Let $(\rho^{(N)}, u^{(N)})$ satisfy
\begin{equation}\label{INS_N}
	\left\{
	\begin{array}{l}
		\partial_t\rho^{(N)}+u^{(N)}\cdot\nabla \rho^{(N)}=0,\\
		\rho^{(N)}(\partial_tu^{(N)}+u^{(N)}\cdot\nabla u^{(N)})-\Delta u^{(N)}+\nabla P^{(N)}=0,\\
		\dive u^{(N)} = 0,\\
		(\rho^{(N)}, u^{(N)})|_{t=0} =(\rho_{0}, u_{0,N}),
	\end{array}
	\right.
\end{equation}
where $u_{0,N}$ is determined by $\PP(\rho_0u_{0,N})=m_{0,N}$, and $m_{0,N}$ is defined in \eqref{def.rho_0u_0N}. Note that
\begin{align*}
 \|u_0-u_{0,N}\|_{L^2}&=\|\PP(u_0-u_{0,N})\|_{L^2}
 \leq\|\PP[(1-\rho_0)(u_0-u_{0,N})]\|_{L^2}+\| m_0-m_{0,N}\|_{L^2},
 \end{align*}
 which implies that (using \eqref{assumption.rho(t)})
 \begin{align*}
 \|u_0-u_{0,N}\|_{L^2}&\leq C \| m_0-m_{0,N}\|_{L^2}\leq C2^{-Ns}\|({m_0})_2\|_{\dot{B}^s_{2,\infty}}\leq C 2^{-Ns}\|m_0\|^{\f p2}_{\BP}.
\end{align*}
Then by \eqref{def.rho_0u_0N}, \eqref{m_0,1_m_0,2} and \eqref{estimate2.rho_0u_0N}, we have the following estimates
\begin{align}
&\|\rho_0-1\|_{L^{\infty}}+\|m_{0,N}\|_{\BP}\leq C\varepsilon_0,\andf\nonumber\\
 &\|m_{0,N}\|_{\BPS}\leq C\varepsilon_0^{\f{p}{\tp}}2^{N\s},\qquad
 \|u_0-u_{0,N}\|_{L^2}\leq C\varepsilon^{\f p2}_0 2^{-Ns},\label{minus.u_0}
\end{align}
where $C$ is independent of $N$.

Thanks to Theorem \ref{thm.INS_Cannone-Meyer-Planchon}, for each $N$ there exists a global solution $(\rho^{(N)}, u^{(N)})$ to \eqref{INS_N} \if 0({not need be unique})\fi, which satisfies all the estimates in Subsection \ref{Subsec.2.1} and Subsection \ref{Subsec.3.2}. %From now on, we fix this solution $(\rho^{(N)}, u^{(N)})$.
Let $(\rho,u)$ be an arbitrary weak solution provided by Lions of \eqref{INS} with initial data $(\rho_0,u_0)$.
We will show that for all $t>0$ there holds
\begin{align}\label{uniqueness2.convergence}
 (\rho^{(N)}, u^{(N)})(t)\to (\rho, u)(t) \  \text{~in~} \dot H^{-1}\times L^2\quad\text{~as~}\quad N\to \infty.
\end{align}
By the uniqueness of the limit, we deduce the uniqueness of Lions weak solutions, and also the weak-strong uniqueness.

 We denote $\delta\!\rho^{(N)}{\eqdef}\rho-\rho^{(N)}$ and $\delta\!u^{(N)}{\eqdef}u-{u}^{(N)}$. Then ($\delta\!\rho^{(N)},\delta\!u^{(N)}$) satisfies
\begin{equation}\label{Eq.delta u_N}
     \left\{
     \begin{array}{l}
     \partial_t\delta\!\rho^{(N)}+{u}^{(N)}\cdot\nabla \delta\!\rho^{(N)}+\delta\!u^{(N)}\cdot\nabla\rho=0,\qquad (t,x)\in \mathbb{R}^{+}\times\mathbb{R}^{3},\\
     \rho(\partial_t\delta\!u^{(N)}+u\cdot\nabla \delta\!u^{(N)})-\Delta \delta\!u^{(N)}+\nabla \delta\!P^{(N)}=-\delta\!\rho^{(N)}\dot{u}^{(N)}-\rho\delta\!u^{(N)}\cdot\nabla u^{(N)},\\
     \dive \delta\!u^{(N)} = 0,\\
     (\delta\!\rho^{(N)}, \delta\!u^{(N)})|_{t=0} =(0, u_0-u_{0,N}).
     \end{array}
     \right.
     \end{equation}
Here $\dot{u}^{(N)}{\eqdef}\pa_t u^{(N)}+u^{(N)}\cdot\nabla u^{(N)}$.
Let $T>0$. For all $t\in(0,T]$, set %($s$ is changed to $\tau$)
\begin{align}
&N_1(t){\eqdef}\sup_{\tau\in (0,t]}\tau^{-1}\|\delta\!\rho^{(N)}(\tau)\|_{\dot{H}^{-1}},\label{2.B_t}\\
&N_2(t){\eqdef}\|\sqrt{\rho}\delta\!u^{(N)}\|_{L^{\infty}(0,t;L^2)}+\|\nabla \delta\!u^{(N)}\|_{L^2(0,t;L^2)}.\label{2.D_t}
\end{align}
For any $\varphi\in C^{\infty}_c(\R^3)$, let $\phi$ solve\begin{align*}
\partial_\tau\phi+u^{(N)}\cdot\nabla\phi=0,\quad (\tau,x)\in(0,t]\times\R^3, \qquad \phi(t,x)=\varphi(x).
\end{align*}
 Similar to the process of \eqref{delta rho.phi}, by using \eqref{decay.nabla u_L^infty} we can also get
\begin{align*}
\|\nabla\phi(t')\|_{L^{2}}\leq C\|\nabla\varphi\|_{L^{2}}\exp\bigl(\int_{t'}^t\|\nabla u^{(N)}(\tau)\|_{L^{\infty}}\,\mathrm{d}\tau\bigr)
\leq C\|\nabla\varphi\|_{L^{2}}(t/t')^{C\varepsilon_0}, \quad \forall\ t'\in(0, t],
\end{align*}
where $C$ is a constant independent of $N$.
Then we obtain
\begin{align*}
      |\langle\delta\!\rho^{(N)}(t), \varphi\rangle|
      &\leq \int_0^t|\langle\rho\delta\!u^{(N)}(\tau),\nabla\phi(\tau)\rangle|\,\mathrm{d}\tau\leq\int_0^t\|\rho\delta\!u^{(N)}(\tau)\|_{L^2}
      \|\nabla\phi(\tau)\|_{L^{2}}\,\mathrm{d}\tau\\
     & \leq \|\rho\delta\!u^{(N)}\|_{L^{\infty}(0,t;L^2)}\|\nabla\varphi\|_{L^{2}}\int_0^t (t/\tau)^{C\varepsilon_0}\,\mathrm{d}\tau\\
     &\leq C t\|\rho_0\|_{L^{\infty}} \|\sqrt{\rho}\delta\!u^{(N)}\|_{L^{\infty}(0,t;L^2)}\|\nabla\varphi\|_{L^{2}}
    \end{align*}
    for all $t\in(0, T]$.
    Thus,  $\|\delta\!\rho^{(N)}(t)\|_{\dot{H}^{-1}}\leq CtN_2(t)$, which implies
    \begin{align}\label{eq.B<D}
     N_1(t)\leq CN_2(t).
    \end{align}
    Testing \eqref{Eq.delta u_N} against $\delta\!u^{(N)}$ gives the following energy estimate
 \begin{align}
			\sup_{t\in [0,T]}\frac12\|\sqrt\rho\delta\!u^{(N)}(t)\|_{L^2}^2+\int_{0}^T\|\nabla &\delta\!u^{(N)}(t)\|^2_{L^2}\,\dt\nonumber\\
			&\leq \|\sqrt\rho_0(u_0-u_{0,N})\|_{L^2}^2
   +2\int_{0}^T\bigl(|\mathrm I(t)|+|\mathrm{II}(t)|\bigr)\,\dt,\label{Eq.delta_u_energy}
	\end{align}
     where
    \[\mathrm I(t){\eqdef}\langle\delta\!\rho^{(N)}(t), \dot{u}^{(N)}\cdot\delta\!u^{(N)}(t)\rangle,\quad \mathrm{II}(t){\eqdef}\int_{\R^3}(\rho\delta\!u^{(N)}\cdot\nabla u^{(N)})\cdot \delta u^{(N)}(t,x)\,\dx,\quad \forall\ t\in(0, T].\]
Firstly, we have
\begin{align}\label{uniqueness2.2}
    |\mathrm{II}(t)|\leq \|\sqrt{\rho}\delta\!u^{(N)}(t)\|^2_{L^2}\|\nabla u^{(N)}(t)\|_{L^{\infty}}.
\end{align}
By duality, Sobolev embedding, Young's inequality, \eqref{2.D_t} and \eqref{eq.B<D}, we have
\begin{equation}\begin{aligned}\label{uniqueness2.1}
&\left|\int_{\R^3}\delta\!\rho^{(N)}\dot{u}^{(N)}\cdot\delta\!u^{(N)}\,\dx\right|\leq \|\delta\!\rho^{(N)}\|_{\dot{H}^{-1}}\|\dot{u}^{(N)}\cdot\delta\!u^{(N)}\|_{\dot{H}^{1}}\\
 &\leq \|\delta\!\rho^{(N)}\|_{\dot{H}^{-1}}(\|\nabla\dot{u}^{(N)}\|_{L^{\infty}}\|\delta\!u^{(N)}\|_{L^2}+\|\dot{u}^{(N)}\|_{L^{\infty}}\|\nabla\delta\!u^{(N)}\|_{L^2})\\
 &\leq \f14 \|\nabla\delta\!u^{(N)}\|^{2}_{L^2}+C \bigl(t^{-1}\|\delta\!\rho^{(N)}\|_{\dot{H}^{-1}}\|t\nabla\dot{u}^{(N)}\|_{L^{\infty}}N_2(t)+t^{-2}\|\delta\!\rho^{(N)}\|^2_{\dot{H}^{-1}}\|t\dot{u}^{(N)}\|^2_{L^{\infty}}\bigr)\\
 &\leq\f14 \|\nabla\delta\!u^{(N)}\|^{2}_{L^2}+C [N_2(t)]^2\bigl(\|t\nabla\dot{u}^{(N)}\|_{L^{\infty}}+\|t\dot{u}^{(N)}\|^2_{L^{\infty}}\bigr).
 \end{aligned}\end{equation}
Plugging \eqref{uniqueness2.1} and \eqref{uniqueness2.2} into \eqref{Eq.delta_u_energy} gives
\begin{align*}
    \sup_{t\in[0,T]}&\|\sqrt\rho\delta\!u^{(N)}(t)\|_{L^2}^2+\int_0^T\|\nabla \delta\!u^{(N)}(t)\|^2_{L^2}\,\dt\leq  2\|\sqrt\rho_0(u_0-u_{0,N})\|_{L^2}^2\\
    &+4\int_0^T\|\sqrt\rho\delta\!u^{(N)}(t)\|_{L^2}^2\|\nabla  u^{(N)}(t)\|_{L^{\infty}}\,\dt
+C\int_0^T[N_2(t)]^2\bigl(\|t\nabla\dot{u}^{(N)}\|_{L^{\infty}}+\|t\dot{u}^{(N)}\|^2_{L^{\infty}}\bigr)\,\dt.
\end{align*}
By applying Gr\"onwall's lemma, we obtain
\begin{align}\label{N2T}
N_2(T)&\leq C\|u_0-u_{0,N}\|_{L^2}\exp{\int_0^TC\bigl(\|\nabla {u}^{(N)}\|_{L^{\infty}}+\|t\nabla\dot{u}^{(N)}\|_{L^{\infty}}+\|t\dot{u}^{(N)}\|^2_{L^{\infty}})\,\dt}.
\end{align}
And by \eqref{decay.nabla u_L^infty}, \eqref{estimate2.rho_0u_0N} and \eqref{decay2.nabla u}, we can obtain
\begin{align*}%\label{decay2.nabla u_min}
    \|\nabla u^{(N)}(t)\|_{L^{\infty}}\leq C\varepsilon_0^{\f{p}{\tp}} \min\{2^{N\s}t^{-1+\f{\s}{2}},t^{-1}\},
\end{align*}
from which, we infer that\footnote{Here we consider large $N$.}
\begin{align}\label{uniqueness2.Gronwall1}
    \int_0^T \|\nabla {u}^{(N)}(t)\|_{L^{\infty}}\,\dt&\leq C\varepsilon_0^{\f{p}{\tp}}\Bigl(\int_0^{2^{-2N}} 2^{N\s}t^{-1+\f{\s}{2}}\,\dt+\int_{2^{-2N}}^T t^{-1}\,\dt\Bigr)\leq C\varepsilon_0^{\f{p}{\tp}}(\ln T+N).
\end{align}
By \eqref{estimate2.t nabla dotu} and \eqref{estimate2.rho_0u_0N}, we have
\begin{align*}
\int_0^{2^{-2N}}\|t\nabla\dot{u}^{(N)}\|_{L^{\infty}}\,\dt\leq C\varepsilon_0^{\f{p}{\tp}},\quad \int_0^{2^{-2N}}\|t\dot{u}^{(N)}\|_{L^{\infty}}^2\,\dt\leq C\varepsilon_0^{\f{2p}{\tp}}.
\end{align*}
Similar to \eqref{estimate2.dot u infty}, we have
\begin{align*}
\|\nabla \dot{u}^{(N)}\|_{L^{\infty}}\lesssim \|\dot{u}^{(N)}\|^{\f12-\f{3}{2p}}_{L^{p}}\|\nabla^2\dot{u}^{(N)}\|^{\f12+\f{3}{2p}}_{L^{p}},\quad
\|\dot{u}^{(N)}\|_{L^{\infty}}\lesssim \|\dot{u}^{(N)}\|^{1-\f{3}{2p}}_{L^{p}}\|\nabla^2\dot{u}^{(N)}\|^{\f{3}{2p}}_{L^{p}}.
\end{align*}
Then we deduce from \eqref{estimate.tu} and \eqref{estimate.t^2dotu} that
\begin{equation*}\begin{aligned}
\int_{2^{-2N}}^T\|t\nabla\dot{u}^{(N)}\|_{L^{\infty}}\,\dt
&\lesssim\|t\dot{u}^{(N)}\|^{\f12-\f{3}{2p}}_{L^{q,\infty}(0,T;L^{p})}\|t^2\nabla^2\dot{u}^{(N)}\|^{\f12+\f{3}{2p}}_{L^{q,\infty}(0,T;L^{p})} \|t^{-(\f12+\f{3}{2p})}\|_{L^{\f{q}{q-1},1}(2^{-2N},T)}\\
%\int_0^{2^{-2N}} t^{-(\f12+\f{3}{2\tp})} \|t\dot{u}^{(N)}\|^{\f12-\f{3}{2\tp}}_{L^{\tp}}\|t^2\nabla^2\dot{u}^{(N)}\|^{\f12+\f{3}{2\tp}}\,\dt\\
%&\qquad\qquad\qquad\qquad\qquad\qquad\qquad\qquad+\int_{2^{-2N}}^T t^{-(\f12+\f{3}{2p})}\|t\dot{u}^{(N)}\|^{\f12-\f{3}{2p}}_{L^{p}}\|t^2\nabla^2\dot{u}^{(N)}\|^{\f12+\f{3}{2p}}_{L^{p}}\,\dt\\
&\lesssim\varepsilon_0\|t^{\f{1}{q}-1}\|_{L^{\f{q}{q-1},1}(2^{-2N},T)}\lesssim\varepsilon_0(\ln T+N),
\end{aligned}\end{equation*}
and
\begin{equation*}\begin{aligned}
\int_{2^{-2N}}^T\|t\dot{u}^{(N)}\|^2_{L^{\infty}}\,\dt
&\lesssim\|t\dot{u}^{(N)}\|^{2-\f{3}{p}}_{L^{q,\infty}(0,T;L^{p})}\|t^2\nabla^2\dot{u}^{(N)}\|^{\f{3}{p}}_{L^{q,\infty}(0,T;L^{p})} \|t^{-\f{3}{p}}\|_{L^{\f{q}{q-2},1}(2^{-2N},T)}\\
&\lesssim\varepsilon_0^2\|t^{\f{2}{q}-1}\|_{L^{\f{q}{q-2},1}(2^{-2N},T)}\lesssim\varepsilon_0^2(\ln T+N).
\end{aligned}\end{equation*}
Thus,
\begin{align}\label{uN}
\int_0^{T}\|t\nabla\dot{u}^{(N)}\|_{L^{\infty}}\,\dt\leq C\varepsilon_0^{\f{p}{\tp}}(\ln T+N),\quad \int_0^{T}\|t\dot{u}^{(N)}\|_{L^{\infty}}^2\,\dt\leq C\varepsilon_0^{\f{2p}{\tp}}(\ln T+N).
\end{align}
So by \eqref{minus.u_0}, \eqref{N2T}, \eqref{uniqueness2.Gronwall1} and \eqref{uN},  we finally get
\begin{align}\label{Eq.uniqueness}
N_2(T)&\leq C\|u_0-u_{0,N}\|_{L^2}\cdot e^{C\varepsilon_0^{\f{p}{\tp}}(\ln T+N)}\leq CT^{C\varepsilon_0^{\f{p}{\tp}}}2^{-N(s-C\varepsilon_0^{\f{p}{\tp}})}\to 0
\end{align}
as $N\to +\infty$, which implies \eqref{uniqueness2.convergence}.
Then for any two solutions $(\rho_1,u_1)$ and $(\rho_2,u_2)$ of \eqref{INS} with the same initial data $(\rho_0,u_0)$, for all $t\in (0,T]$ we have
\begin{align*}
    &\|\rho_1(t)-\rho_2(t)\|_{\dot{H}^{-1}}\leq \|\rho_1(t)-\rho^{(N)}(t)\|_{\dot{H}^{-1}}+\|\rho^{(N)}(t)-\rho_2(t)\|_{\dot{H}^{-1}}\to 0,\\
    &\|u_1(t)-u_2(t)\|_{L^2}\leq \|u_1(t)-u^{(N)}(t)\|_{L^2}+\|u^{(N)}(t)-u_2(t)\|_{L^2}\to 0,
\end{align*}
as $N\to \infty$.
Therefore, for any solution, it coincides with the limit of $(\rho^{(N)},u^{(N)})$.

This completes the proof of Theorem \ref{thm.weak-strong uniqueness}.
\end{proof}

\section{Global well-posedness in $\dot B^{1/2}_{2,\infty}$}\label{Sec.self similar}

In this section, we prove Theorem \ref{thm.self similar}.

\subsection{A priori estimates}\label{Subsec.4.1}
In this subsection, we establish the estimates that are needed to prove Theorem \ref{thm.self similar}. Let $(\rho,u)$ be a smooth solution to the system \eqref{INS} on $[0,T^*)\times \R^3$ with $\rho_*< \rho\leq \|\rho_0\|_{L^{\infty}}$ for some constant\footnote{In this section, we require the density to be bounded away from zero to ensure the smoothness and appropriate decay of the solution $(\rho, u)$, which will be useful in some steps involving integration by parts. Nevertheless, in all estimates, the constant $C>0$ is independent of this positive lower bound $\rho_*$.} $\rho_*>0$ and $u_0\in\dot{B}^{\f12}_{2,\infty}(\R^3)$, where $T^*\in(0, +\infty]$ is the maximal time of existence.

\begin{lem}\label{lem.rho u L3}
	There exists a constant $\varepsilon_0\in(0,1)$ depending only on $\|\rho_0\|_{L^\infty}$ such that if \\ $\|u_0\|_{\BB}<\varepsilon_0$, then we have
	\begin{equation}\label{estimate3.rho u L3}
		\|\sqrt{\rho} u\|_{L^\infty(0,T^*;L^{3,\infty})}+\|t^{1/4}\nabla u\|_{L^{\infty}(0,T^*;L^2)}\leq C\|u_0\|_{\BB}
	\end{equation}
	for some constant $C>0$ depending only on $\|\rho_0\|_{L^\infty}$.
\end{lem}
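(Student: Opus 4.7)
The plan is to follow the blueprint sketched in the introduction: frequency-localize the momentum equation, run an $L^2$ energy estimate on each dyadic block $u_j\eqdef\DD_j u$, and then assemble the per-block bounds via a high-low decomposition of $u_0$ to extract the two scale-invariant estimates.

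I would first apply $\DD_j$ to $\rho(\partial_t u+u\cdot\nabla u)-\Delta u+\nabla P=0$ to get
\begin{align*}
\rho(\partial_t u_j+u\cdot\nabla u_j)-\Delta u_j+\nabla P_j=[\DD_j,\rho]\,\dot u+\rho[u\cdot\nabla,\DD_j]u,
\end{align*}
with $\dot u=\partial_t u+u\cdot\nabla u$. Testing against $u_j$, using $\dive u=0$ and the transport equation for $\rho$, one obtains the block energy identity
\begin{align*}
\tfrac12\frac{d}{dt}\|\sqrt\rho\,u_j\|_{L^2}^2+\|\nabla u_j\|_{L^2}^2=\int\bigl([\DD_j,\rho]\dot u+\rho[u\cdot\nabla,\DD_j]u\bigr)\cdot u_j\,\dx.
\end{align*}
After estimating the right-hand side by commutator/Bernstein arguments and absorbing the lossy piece into the dissipation via $\|\nabla u_j\|_{L^2}^2\gtrsim 2^{2j}\|u_j\|_{L^2}^2$ (using $\rho_*<\rho\leq\|\rho_0\|_{L^\infty}$), a Grönwall argument would yield the per-block decay
\begin{align*}
\|\sqrt\rho\,u_j(t)\|_{L^2}\lesssim e^{-c\,2^{2j}t}\|\DD_j u_0\|_{L^2}\lesssim 2^{-j/2}e^{-c\,2^{2j}t}\|u_0\|_{\BB},
\end{align*}
up to multiplicative corrections bounded by universal constants thanks to the smallness of $\|u_0\|_{\BB}$.

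To synthesize these into the claimed critical estimates, I would perform a high-low split $u_0=S_Nu_0+(\mathrm{Id}-S_N)u_0$ at the time-dependent level $2^{N(t)}\sim t^{-1/2}$. The low frequencies sit in $L^{3,\infty}$ uniformly in $N$, by the embedding $\BB\hookrightarrow L^{3,\infty}(\R^3)$ (which comes from Bernstein, $\|\DD_j u_0\|_{L^3}\lesssim 2^{j/2}\|\DD_j u_0\|_{L^2}\lesssim\|u_0\|_{\BB}$, combined with real interpolation of the Sobolev embedding $\dot H^{1/2}\hookrightarrow L^3$), and this control persists under the flow because $\rho\leq\|\rho_0\|_{L^\infty}$. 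The high-frequency part is summed in $L^2$ via
\begin{align*}
\sum_{j\geq N(t)}\|\sqrt\rho\,u_j(t)\|_{L^2}^2\lesssim\|u_0\|_{\BB}^2\sum_{j\geq N(t)}2^{-j}e^{-c\,2^{2j}t}\lesssim t^{1/2}\|u_0\|_{\BB}^2,
\end{align*}
which, together with the low-frequency $L^{3,\infty}$ bound, gives $\|\sqrt\rho u(t)\|_{L^{3,\infty}}\lesssim\|u_0\|_{\BB}$. A parallel computation
\begin{align*}
\|\nabla u(t)\|_{L^2}^2\lesssim\sum_j 2^{2j}\|u_j(t)\|_{L^2}^2\lesssim\|u_0\|_{\BB}^2\sum_j 2^{j}e^{-c\,2^{2j}t}\lesssim t^{-1/2}\|u_0\|_{\BB}^2
\end{align*}
produces the second estimate.

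The main obstacle is the commutator term $[\DD_j,\rho]\dot u$: since $\rho$ is merely $L^\infty$, classical commutator calculus is unavailable, and $\dot u$ carries a time derivative. I would handle it by pairing with $u_j$ and shifting derivatives via integration by parts (trading $\dot u$ for lower-order quantities that can be bounded by $\nabla u$ and $\nabla P$), then absorbing what remains into the dissipation using Bernstein and the smallness of $\|u_0\|_{\BB}$. A standard continuity argument on $T<T^*$, anchored by the smoothness of the solution that makes the estimates trivial for small $T$, then closes the bootstrap once $\varepsilon_0=\varepsilon_0(\|\rho_0\|_{L^\infty})$ is taken small enough.
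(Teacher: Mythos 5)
Your proposal diverges from the paper's proof at the very first step, and the divergence introduces an obstruction you cannot repair.

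You set $u_j=\DD_j u$ and apply $\DD_j$ to the momentum equation, producing the commutators $[\DD_j,\rho]\dot u$ and $\rho[u\cdot\nabla,\DD_j]u$. For $\rho\in L^\infty$ with no modulus of continuity, $[\DD_j,\rho]$ gains nothing: the best one can say is $\|[\DD_j,\rho]\dot u\|_{L^2}\lesssim\|\rho\|_{L^\infty}\|\dot u\|_{L^2}$, which has no decay in $j$ and cannot be absorbed into $\|\nabla u_j\|_{L^2}^2\gtrsim 2^{2j}\|u_j\|_{L^2}^2$. Your suggestion to ``shift derivatives via integration by parts'' does not identify a concrete gain and is not a recipe. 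As a consequence the per-block Grönwall decay $\|\sqrt\rho u_j(t)\|_{L^2}\lesssim e^{-c\,2^{2j}t}\|\DD_j u_0\|_{L^2}$ does not follow; it is not true in this generality, and nothing downstream (the high-low split, the claim that the low-frequency $L^{3,\infty}$ control ``persists under the flow'') is justified without it. The embedding $\BB\hookrightarrow L^{3,\infty}$ is correct but plays no role in either the paper's or your argument once you try to propagate it.

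The paper avoids the commutator entirely by a different decomposition. Instead of localizing $u$ in frequency, it defines $u_j$ as the solution of the \emph{linear} Stokes-transport system
\begin{align*}
\rho(\partial_t u_j+u\cdot\nabla u_j)-\Delta u_j+\nabla P_j=0,\qquad \dive u_j=0,\qquad u_j|_{t=0}=\DD_j u_0,
\end{align*}
with the full (nonlinear) $\rho$ and $u$ as coefficients. By uniqueness for this linear system, $u=\sum_j u_j$. Each $u_j$ then obeys a clean energy identity with no commutator: the $L^2$ estimate gives $\|\sqrt\rho u_j\|_{L^\infty_t L^2}\lesssim 2^{-j/2}\|u_0\|_{\BB}$, while testing against $\partial_t u_j$, using the Stokes estimate and a bootstrap smallness on $\|\sqrt\rho u\|_{L^{3,\infty}}$, gives the monotonicity $\frac{d}{dt}\|\nabla u_j\|_{L^2}^2+\|\sqrt\rho\dot u_j\|_{L^2}^2\leq0$ and hence $\|\nabla u_j\|_{L^\infty_t L^2}\lesssim 2^{j/2}\|u_0\|_{\BB}$. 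Note these $u_j$ are \emph{not} frequency localized; in particular there is no Bernstein lower bound on $\|\nabla u_j\|_{L^2}$ and no per-block exponential decay, and none is needed. The $L^{3,\infty}$ bound is obtained by estimating the level set $|\{|\sqrt\rho u|>\lambda\}|$ via Chebyshev in $L^2$ for $\sum_{j>N}u_j$ and in $L^6$ (Sobolev) for $\sum_{j\leq N}u_j$, then choosing $N\sim\log_2(\lambda/\|u_0\|_{\BB})$. For $t^{1/4}\nabla u$, a mean-value argument on $[0,t]$ produces $\|\nabla u_j(t)\|_{L^2}^2\leq\min\{t^{-1}\|\sqrt{\rho_0}\DD_j u_0\|_{L^2}^2,\|\nabla\DD_j u_0\|_{L^2}^2\}$, and summing with the split point $2^N\sim t^{-1/2}$ closes. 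To fix your argument you would need to adopt the paper's decomposition; as written, the commutator step is a genuine gap.
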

\begin{proof}
For each $j\in\Z$, we consider the following coupled system of $(u_j, \nabla P_j)$:
	 \begin{equation}\label{Eq.u_j_eq}
	 	\begin{cases}
	 		\rho(\pa_tu_j+u\cdot\nabla u_j)-\Delta u_j+\nabla P_j=0,\\
	 		\dive u_j=0,\\
	 		u_j|_{t=0}=\DD_ju_0.
	 	\end{cases}
	 \end{equation}
	 Then it follows from the uniqueness of local smooth solution to \eqref{INS} that
    \begin{equation}\label{Eq.u=sum_u_j}
        u=\sum_{j\in\Z}u_j,\qquad \nabla P=\sum_{j\in\Z}\nabla P_j.
    \end{equation}

    Firstly, we can easily deduce that
    \begin{align}\label{estimate3.rho u_j L2}
    \|\sqrt{\rho}u_j\|^2_{L^{\infty}(0,T^*;L^2)}+{2}\|\nabla u_j\|^2_{L^2(0,T^*;L^2)}\leq 2\|\sqrt{\rho_0}\DD_ju_0\|^2_{L^2}\leq C 2^{-j}\|u_0\|^2_{\BB}.   \end{align}
Taking $L^2$ inner product of the momentum equation of \eqref{Eq.u_j_eq} with $\p_tu_j$, we obtain
\begin{align*}
    \f12\f{\mathrm{d}}{\dt}\|\nabla u_j(t)\|^2_{L^2}+\|\sqrt{\rho}\dot{u}_j(t)\|^2_{L^2}&=\int_{\R^3}\rho \dot{u}_j(u\cdot\nabla u_j)\,\dx\leq \|\sqrt{\rho}u\|_{L^{3,\infty}}\|\sqrt{\rho}\dot{u}_j\|_{L^2}\|\nabla u_j\|_{L^{6,2}}\\
    &\leq C \|\sqrt{\rho}u\|_{L^{3,\infty}}\|\sqrt{\rho}\dot{u}_j\|_{L^2}\|\nabla^2 u_j\|_{L^{2}},
\end{align*}
where $\dot{u}_j{\eqdef}\p_t u_j+u\cdot\nabla u_j$, and we also used the fact that\footnote{The proof of this embedding can be found in \cite{Tartar1998}, Remark 5.} $\dot{H}^1(\R^3)\hookrightarrow L^{6,2}(\R^3)$. By the Stokes estimate, we have
\begin{align*}
    \|\D u_j\|_{L^2}+\|\nabla P_j\|_{L^2}\leq C\|\sqrt{\rho}\dot{u}_j\|_{L^2}.
\end{align*}
Thus, we obtain
\begin{align*}
 \f12\f{\mathrm{d}}{\dt}\|\nabla u_j(t)\|^2_{L^2}+\|\sqrt{\rho}\dot{u}_j(t)\|^2_{L^2}\leq C \|\sqrt{\rho}u\|_{L^{3,\infty}}\|\sqrt{\rho}\dot{u}_j\|^2_{L^2}.   \end{align*}
We denote
\begin{equation}\label{def.T_*}
        T_*{\eqdef}\sup\{t\in(0, T^*]: \|\sqrt{\rho} u\|_{L^{\infty}(0, t; L^{3,\infty})}\leq c_1\}.
\end{equation}
Then we infer that for $c_1$ being so small that $Cc_1\leq \f12$,
\begin{align}\label{uj1}
 \f{\mathrm{d}}{\dt}\|\nabla u_j(t)\|^2_{L^2}+\|\sqrt{\rho}\dot{u}_j(t)\|^2_{L^2}\leq 0 \quad\text{~for~}\quad t\leq T_*,
 \end{align}
 from which, we deduce that
 \begin{align}\label{estimate3.nabla u_j L2}
 \|\nabla u_j\|_{L^\infty(0, T_*; L^2)}+\|\sqrt{\rho}\dot{u}_j\|_{L^2(0,T_*;L^2)}&\leq \|\nabla \DD_ju_0\|_{L^2}\leq C2^{j/2}\|u_0\|_{\BB}.
 \end{align}
For any $\lambda>0$, we get by \eqref{Eq.u=sum_u_j} and \eqref{Lorentz.df_Lp} that
\begin{align*}
    &\lambda^3|\{x\in\R^3:|\sqrt{\rho}u(t,x)|>\lambda\}|\\
    &\leq \lambda^3 |\{x\in\R^3:|\sum_{j>N}\sqrt{\rho}u_j(t,x)|>\lambda/2\}|+\lambda^3 |\{x\in\R^3:|\sum_{j\leq N}\sqrt{\rho}u_j(t,x)|>\lambda/2\}|\\
    &\leq \lambda^3 (2/{\lambda})^2\|\sum_{j>N}\sqrt{\rho}u_j(t,x)\|^2_{L^2}+\lambda^3(2/\lambda)^6\|\sum_{j\leq N}\sqrt{\rho}u_j(t,x)\|^6_{L^6},
\end{align*}
In view of \eqref{estimate3.rho u_j L2} and \eqref{estimate3.nabla u_j L2}, we have
\begin{align*}
\|\sum_{j>N}\sqrt{\rho}u_j(t,\cdot)\|_{L^2}\leq \sum_{j>N}\|\sqrt{\rho}u_j(t,\cdot)\|_{L^2}\leq C\sum_{j>N}2^{-j/2}\|u_0\|_{\BB}\leq C2^{-N/2}\|u_0\|_{\BB},\\
\|\sum_{j\leq N}\sqrt{\rho}u_j(t,\cdot)\|_{L^6}\leq C\sum_{j\leq N}\|\nabla u_j(t,\cdot)\|_{L^2}\leq C\sum_{j\leq N}2^{j/2}\|u_0\|_{\BB}\leq C2^{N/2}\|u_0\|_{\BB},
\end{align*}
from which, we infer that
\begin{align*}
    \lambda^3|\{x\in\R^3:|\sqrt{\rho}u(t,x)|>\lambda\}|\leq C\lambda2^{-N}\|u_0\|^2_{\BB}+C\lambda^{-3}2^{3N}\|u_0\|^6_{\BB}.
\end{align*}
Hence, for any $\lambda>0$, taking $N=[{\log_2}(\lambda\|u_0\|^{-1}_{\BB})]+1$, we finally get
\begin{align*}
    \|\sqrt{\rho}u\|_{L^{\infty}(0,T_*;L^{3,\infty})}=\sup_{\lambda>0} \lambda|\{x\in\R^3:|\sqrt{\rho}u(t,x)|>\lambda\}|^{1/3}
    \leq C\|u_0\|_{\BB} \quad\text{~for~} \quad t\leq T_*.
\end{align*}
Taking $\varepsilon_0$ so small that $C\|u_0\|_{\BB}\leq C\varepsilon_0\leq \f{c_1}{2}$ for $c_1$ given by \eqref{def.T_*}, we infer that $T_*=T^*$ and this in turn shows the first estimate of \eqref{estimate3.rho u L3}.

By \eqref{estimate3.rho u_j L2}, we know that  for any $0<t<T^*$, there exists $\sigma_1\in(0,t)$ such that
\begin{align*}
    \|\nabla u_j(\sigma_1)\|^2_{L^2}\leq t^{-1}\|\sqrt{\rho_0}\DD_ju_0\|^2_{L^2},
\end{align*}
which along with \eqref{uj1} gives
\begin{align*}
    \|\nabla u_j(t)\|^2_{L^2}+\int_{\sigma_1}^t\|\sqrt{\rho}\dot{u}_j(s)\|^2_{L^2}\,\ds\leq \|\nabla u_j(\sigma_1)\|^2_{L^2}\leq t^{-1}\|\sqrt{\rho_0}\DD_ju_0\|^2_{L^2}.
\end{align*}
Therefore, we finally infer that for any $0<t<T^*$,
\begin{align}\label{estimate3.nabla u_j min}
    \|\nabla u_j(t)\|^2_{L^2}\leq \min \bigl\{t^{-1}\|\sqrt{\rho_0}\DD_ju_0\|^2_{L^2}, ~\|\nabla \DD_ju_0\|^2_{L^2}\bigr\}.
\end{align}

For any fixed $0<t<T^*$, by \eqref{Eq.u=sum_u_j} and \eqref{estimate3.nabla u_j min}, we have
\begin{align*}
    \|\nabla u(t)\|^2_{L^2}&\leq \sum_{j\leq N}\|\nabla u_j(t)\|^2_{L^2}+\sum_{j>N}\|\nabla u_j(t)\|^2_{L^2}
    \leq \sum_{j\leq N}\|\nabla\DD_j u_0\|^2_{L^2}+\sum_{j>N}t^{-1}\|\sqrt{\rho_0}\DD_ju_0\|^2_{L^2}\\
    &\leq C2^N\|u_0\|^2_{\BB}+Ct^{-1}2^{-N}\|u_0\|^2_{\BB}.
\end{align*}
Taking $N=1+[{\log_2} (t^{-1/2})]$, we finally get the second estimate of \eqref{estimate3.rho u L3}.
\end{proof}

\begin{lem}\label{lem.nabla u_T,2T}
	Let $\varepsilon_0\in(0,1)$ be given by Lemma \ref{lem.rho u L3}. If $\|u_0\|_{\BB}<\varepsilon_0$, then for any $0<T_1\leq T_2<T^*$, we have
	\begin{align}
		&\sup_{t\in[T_1,T_2]}t^{1/2}\|\nabla u(t)\|^2_{L^2}+\|t^{1/4}(\sqrt{\rho} \dot{u},\nabla^2u,\nabla P)\|^2_{L^2(T_1,T_2;L^{2})}\leq C\bigl(\ln(T_2/T_1)+1\bigr)\|u_0\|^2_{\BB},\label{estimate3.rho dotu}\\
  &\|\sqrt{\rho} \dot{u}(\sigma_2)\|_{L^2}\leq CT_1^{-3/4}\|u_0\|_{\BB},\quad \exists~\sigma_2\in [T_1/2,T_1],\label{decay3.rho dotu}
	\end{align}
	for some constant $C>0$ depending only on $\|\rho_0\|_{L^\infty}$.
\end{lem}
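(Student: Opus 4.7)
\medskip

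\noindent\textbf{Proof proposal for Lemma \ref{lem.nabla u_T,2T}.} The strategy is to derive a monotone energy identity for $\|\nabla u\|_{L^2}^2$ controlled by $\|\sqrt\rho\dot u\|_{L^2}^2$, then obtain the time-weighted version by multiplying by $t^{1/2}$ and integrating, finally extracting the $\nabla^2u$, $\nabla P$ and pointwise $\sqrt\rho\dot u$ bounds via the Stokes estimate and a mean-value argument.

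First I will test the momentum equation $\rho\dot u-\Delta u+\nabla P=0$ against $\dot u$. Writing $\dot u=u_t+u\cdot\nabla u$ and using $\dive u_t=0$, the $\nabla P\cdot u_t$ term vanishes, while $\int\Delta u\cdot u_t\,\dx=-\f12\f{d}{dt}\|\nabla u\|_{L^2}^2$. The remaining contribution equals $\int(\Delta u-\nabla P)\cdot(u\cdot\nabla u)\,\dx=\int\rho\dot u\cdot(u\cdot\nabla u)\,\dx$. Thus
\begin{equation*}
\f12\f{d}{dt}\|\nabla u\|_{L^2}^2+\|\sqrt\rho\dot u\|_{L^2}^2=\int\sqrt\rho\dot u\cdot\sqrt\rho(u\cdot\nabla u)\,\dx.
\end{equation*}
The right-hand side is bounded by $\|\sqrt\rho\dot u\|_{L^2}\|\sqrt\rho u\|_{L^{3,\infty}}\|\nabla u\|_{L^{6,2}}$; using $\dot H^1\hookrightarrow L^{6,2}$ and the Stokes estimate $\|\nabla^2u\|_{L^2}\leq C\|\rho_0\|_{L^\infty}^{1/2}\|\sqrt\rho\dot u\|_{L^2}$ (applied to $-\Delta u+\nabla P=-\rho\dot u$), this is at most $C\|u_0\|_{\BB}\|\sqrt\rho\dot u\|_{L^2}^2$ by Lemma~\ref{lem.rho u L3}. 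Choosing $\varepsilon_0$ small enough, the term is absorbed into the left-hand side, giving
\begin{equation*}
\f{d}{dt}\|\nabla u\|_{L^2}^2+\|\sqrt\rho\dot u\|_{L^2}^2\leq 0\qquad\text{on }(0,T^*).
\end{equation*}

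Next I multiply this monotonicity by $t^{1/2}$, rewrite as $\f{d}{dt}(t^{1/2}\|\nabla u\|_{L^2}^2)+t^{1/2}\|\sqrt\rho\dot u\|_{L^2}^2\leq \f12 t^{-1/2}\|\nabla u\|_{L^2}^2$, and integrate from $T_1$ to any $T\in[T_1,T_2]$. Lemma~\ref{lem.rho u L3} gives $\|\nabla u(t)\|_{L^2}^2\leq Ct^{-1/2}\|u_0\|_{\BB}^2$, so the boundary term $T_1^{1/2}\|\nabla u(T_1)\|_{L^2}^2$ is $\leq C\|u_0\|_{\BB}^2$ and the forcing integral is $\leq C\|u_0\|_{\BB}^2\int_{T_1}^{T_2}t^{-1}\,\dt=C\|u_0\|_{\BB}^2\ln(T_2/T_1)$. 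Taking supremum in $T$ yields the $\sup_t t^{1/2}\|\nabla u\|_{L^2}^2$ and $\int t^{1/2}\|\sqrt\rho\dot u\|_{L^2}^2\,\dt$ parts of \eqref{estimate3.rho dotu}. The corresponding bounds for $\|t^{1/4}\nabla^2u\|_{L^2(T_1,T_2;L^2)}$ and $\|t^{1/4}\nabla P\|_{L^2(T_1,T_2;L^2)}$ then follow directly from the Stokes estimate.

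Finally, for \eqref{decay3.rho dotu} I integrate the monotonicity $\f{d}{dt}\|\nabla u\|_{L^2}^2+\|\sqrt\rho\dot u\|_{L^2}^2\leq 0$ over $[T_1/2,T_1]$ to obtain $\int_{T_1/2}^{T_1}\|\sqrt\rho\dot u\|_{L^2}^2\,\ds\leq\|\nabla u(T_1/2)\|_{L^2}^2\leq CT_1^{-1/2}\|u_0\|_{\BB}^2$, and then the mean value theorem produces $\sigma_2\in[T_1/2,T_1]$ with $\|\sqrt\rho\dot u(\sigma_2)\|_{L^2}^2\leq\f{2}{T_1}\cdot CT_1^{-1/2}\|u_0\|_{\BB}^2$, i.e.\ $\|\sqrt\rho\dot u(\sigma_2)\|_{L^2}\leq CT_1^{-3/4}\|u_0\|_{\BB}$. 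The only delicate step is verifying the absorption that produces the clean monotone inequality; everything afterwards is routine manipulation of the Gr\"onwall-type estimate and Stokes regularity.
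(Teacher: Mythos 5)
Your proof is correct and follows essentially the same route as the paper: the same energy identity (testing with $\dot u$ rather than $\partial_t u$ yields the identical equation), the same absorption via $\|\sqrt\rho u\|_{L^{3,\infty}}$, $\dot H^1\hookrightarrow L^{6,2}$ and the Stokes estimate, and the same $t^{1/2}$-weighted integration. The only cosmetic difference is that for \eqref{decay3.rho dotu} you integrate the unweighted monotonicity directly over $[T_1/2,T_1]$ rather than extracting it from the weighted bound \eqref{estimate3.rho dotu}, which is an equally clean variant.
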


\begin{proof}
   By taking $L^2$ inner product of the momentum equation of \eqref{INS} with $\p_t u$, we obtain
	\begin{align*}
		\frac12\frac{\mathrm d}{\dt}\|\nabla u(t)\|_{L^2}^2+\|\sqrt\rho\dot u(t)\|_{L^2}^2&=\int_{\R^3}\rho \dot{u}(u\cdot \nabla u)\,\dx\leq \|\sqrt\rho\dot u(t)\|_{L^2}\|\sqrt{\rho}u\|_{L^{3,\infty}}\|\nabla u\|_{L^{6,2}}\\
  &\leq C\|\sqrt{\rho}u\|_{L^{3,\infty}}\|\sqrt\rho\dot u\|_{L^2}\|\nabla^2u\|_{L^2},
    \end{align*}
    where we have used the embedding $\dot{H}^1(\R^3)\hookrightarrow L^{6,2}(\R^3)$. As we also have the following Stokes estimate
   \begin{align*}
    \|\D u\|_{L^2}+\|\nabla P\|_{L^2}\leq C\|\sqrt{\rho}\dot{u}\|_{L^2},
   \end{align*}
and  then
  \begin{align*}
  \frac12\frac{\mathrm d}{\dt}\|\nabla u(t)\|_{L^2}^2+\|\sqrt\rho\dot u(t)\|_{L^2}^2
  &\leq C\|\sqrt{\rho}u\|_{L^{3,\infty}}\|\sqrt\rho\dot u\|^2_{L^2},
	\end{align*}
 which along with Lemma \ref{lem.rho u L3} shows
 \begin{align*}
& \frac{\mathrm d}{\dt}\|\nabla u(t)\|_{L^2}^2+\|\sqrt\rho\dot u(t)\|_{L^2}^2\leq 0,\\
 &\frac{\mathrm d}{\dt}\bigl(t^{1/2}\|\nabla u(t)\|_{L^2}^2\bigr)+t^{1/2}\|\sqrt\rho\dot u(t)\|_{L^2}^2\leq \f12t^{-1/2}\|\nabla u(t)\|_{L^2}^2.
 \end{align*}
Thus, for any $0<\delta\leq T_1\leq T_2<T^*$, we get by \eqref{estimate3.rho u L3}  that
 \begin{equation}\begin{aligned}
\sup_{t\in[\delta,T_2]}t^{1/2}\|\nabla u(t)\|^2_{L^2}+\|t^{1/4}\sqrt{\rho} \dot{u}\|^2_{L^2(\delta,T_2;L^{2})}&\leq \delta^{1/2}\|\nabla u(\delta)\|^2_{L^2}+\f12\|t^{-1/4}\nabla u\|^2_{L^2(\delta,T_2;L^{2})}\\
&\leq C\bigl(\ln(T_2/\delta)+1\bigr)\|u_0\|^2_{\BB}.\label{estimate3.sigma2}
\end{aligned}\end{equation}
By \eqref{estimate3.sigma2} we know that
\begin{align*}
    \int_{T_1/2}^{T_1}t^{1/2}\|\sqrt{\rho}\dot{u}\|^2_{L^2}\,\dt\leq C\|u_0\|^2_{\BB},
\end{align*}
which implies \eqref{decay3.rho dotu}.
\end{proof}

\begin{lem}\label{lem.nabla u L_infty}
	Let $\varepsilon_0\in(0,1)$ be given by Lemma \ref{lem.rho u L3}.
  If $\|u_0\|_{\BB}<\varepsilon_0$, then for any $0<T_1<T^*/2$, we have
	\begin{align}
		&\sup_{t\in [T_1,2T_1]}t^{3/2}\|\sqrt{\rho}\dot{u}\|^2_{L^2}+\|t^{3/4}\nabla\dot{u}\|^2_{L^2(T_1,2T_1;L^2)}\leq C\|u_0\|^2_{\BB},\label{Eq.low_order_est_rhou}\\
  &\andf
   \int_{T_1}^{2T_1} \|\nabla u(t)\|_{L^{\infty}}\,\dt\leq C\|u_0\|_{\BB},\label{L1_T(Lip)}
	\end{align}
	for some constant $C>0$ depending only on $\|\rho_0\|_{L^{\infty}}$.
\end{lem}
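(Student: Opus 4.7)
The plan is to prove \eqref{Eq.low_order_est_rhou} via a weighted energy estimate for the material derivative equation \eqref{eq.dot u}, and then deduce \eqref{L1_T(Lip)} via a Gagliardo--Nirenberg interpolation combined with Stokes regularity in $L^6$.

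For \eqref{Eq.low_order_est_rhou}, I would take the $L^2$ inner product of \eqref{eq.dot u} with $\dot u$. Using $\pa_t\rho+\dive(\rho u)=0$ for the transport term, integration by parts for the viscous term, and the identity $\dive\dot u=\mathrm{Tr}(\nabla u\cdot\nabla u)$ for the pressure term, one obtains
\begin{align*}
\tfrac12\tfrac d{\dt}\|\sqrt\rho\dot u\|_{L^2}^2+\|\nabla\dot u\|_{L^2}^2=\int f\cdot\dot u\,\dx+\int\dot P\,\mathrm{Tr}(\nabla u\cdot\nabla u)\,\dx.
\end{align*}
Each term on the right is cubic or higher in $\nabla u$ and $\dot u$. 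By H\"older, the Sobolev embedding $\dot H^1\hookrightarrow L^6$ for $\dot u$, and the Stokes-type estimate $\|\nabla^2 u\|_{L^2}+\|\nabla P\|_{L^2}\lesssim\|\sqrt\rho\dot u\|_{L^2}$, each of the three contributions to $\int f\cdot\dot u$ is bounded by $\|\nabla u\|_{L^2}^{1/2}\|\sqrt\rho\dot u\|_{L^2}^{3/2}\|\nabla\dot u\|_{L^2}$, and the pressure contribution can be controlled in the same form (see below). Absorbing $\tfrac14\|\nabla\dot u\|_{L^2}^2$ into the left-hand side, multiplying by $t^{3/2}$, and rearranging yields
\begin{align*}
\tfrac d{\dt}\bigl(t^{3/2}\|\sqrt\rho\dot u\|_{L^2}^2\bigr)+t^{3/2}\|\nabla\dot u\|_{L^2}^2\lesssim t^{1/2}\|\sqrt\rho\dot u\|_{L^2}^2+\gamma(t)\cdot t^{3/2}\|\sqrt\rho\dot u\|_{L^2}^2,
\end{align*}
where $\gamma(t)=\|\nabla u(t)\|_{L^2}\|\sqrt\rho\dot u(t)\|_{L^2}+\|\nabla u(t)\|_{L^2}^4$. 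I would integrate from $\sigma_2\in[T_1/2,T_1]$ provided by \eqref{decay3.rho dotu} (which ensures $\sigma_2^{3/2}\|\sqrt\rho\dot u(\sigma_2)\|_{L^2}^2\leq C\|u_0\|_{\BB}^2$) up to any $t\in[T_1,2T_1]$. The source term $t^{1/2}\|\sqrt\rho\dot u\|_{L^2}^2$ is controlled by $C\|u_0\|_{\BB}^2$ via \eqref{estimate3.rho dotu}; using $\|\nabla u(t)\|_{L^2}\lesssim t^{-1/4}\|u_0\|_{\BB}$ from \eqref{estimate3.rho u L3}, combined with Cauchy--Schwarz and \eqref{estimate3.rho dotu}, the coefficient satisfies $\int_{T_1/2}^{2T_1}\gamma(t)\,\dt\leq C\|u_0\|_{\BB}^2\leq C\varepsilon_0^2$. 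Gr\"onwall's inequality then closes \eqref{Eq.low_order_est_rhou}.

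For \eqref{L1_T(Lip)}, I apply Gagliardo--Nirenberg in $\R^3$: $\|\nabla u\|_{L^\infty}\lesssim\|\nabla u\|_{L^2}^{1/4}\|\nabla^2 u\|_{L^6}^{3/4}$, together with the Stokes estimate in $L^6$, $\|\nabla^2 u\|_{L^6}\lesssim\|\rho\dot u\|_{L^6}\lesssim\|\nabla\dot u\|_{L^2}$, to obtain $\|\nabla u\|_{L^\infty}\lesssim\|\nabla u\|_{L^2}^{1/4}\|\nabla\dot u\|_{L^2}^{3/4}$. By H\"older with exponents $(4,4/3)$,
\begin{align*}
\int_{T_1}^{2T_1}\|\nabla u\|_{L^\infty}\,\dt\lesssim\Bigl(\int_{T_1}^{2T_1}\|\nabla u\|_{L^2}\,\dt\Bigr)^{1/4}\Bigl(\int_{T_1}^{2T_1}\|\nabla\dot u\|_{L^2}\,\dt\Bigr)^{3/4}.
\end{align*}
By \eqref{estimate3.rho u L3} the first factor is $\lesssim T_1^{3/4}\|u_0\|_{\BB}$; by Cauchy--Schwarz applied to $\int_{T_1}^{2T_1}t^{-3/4}(t^{3/4}\|\nabla\dot u\|_{L^2})\,\dt$ together with \eqref{Eq.low_order_est_rhou}, the second is $\lesssim T_1^{-1/4}\|u_0\|_{\BB}$. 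The powers of $T_1$ cancel exactly, giving $\int_{T_1}^{2T_1}\|\nabla u\|_{L^\infty}\,\dt\leq C\|u_0\|_{\BB}$.

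The main obstacle is the pressure term $\int\dot P\,\mathrm{Tr}(\nabla u\cdot\nabla u)\,\dx$, since $\dot P$ does not directly satisfy a clean Stokes estimate. The resolution is to use $\mathrm{Tr}(\nabla u\cdot\nabla u)=\dive(u\cdot\nabla u)$ and integrate by parts to $-\int(u\cdot\nabla u)\cdot\nabla\dot P\,\dx$, and then represent $\nabla\dot P$ via the Leray projector applied to the $\dot u$-equation \eqref{eq.dot u}; the resulting contributions are structurally identical to those in $\int f\cdot\dot u$ and can be absorbed into $\tfrac14\|\nabla\dot u\|_{L^2}^2$ together with coefficients in front of $\|\sqrt\rho\dot u\|_{L^2}^2$ that are integrable on $[T_1/2,2T_1]$.
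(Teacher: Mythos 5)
Your argument for \eqref{L1_T(Lip)} is correct and uses a slightly different Gagliardo--Nirenberg interpolation than the paper ($\|\nabla u\|_{L^\infty}\lesssim\|\nabla u\|_{L^2}^{1/4}\|\nabla^2 u\|_{L^6}^{3/4}$ versus the paper's $\|\nabla u\|_{L^\infty}\lesssim\|\nabla u\|_{L^6}^{1/2}\|\nabla^2 u\|_{L^6}^{1/2}$); both reduce to the $\dot u$-level quantities and the powers of $T_1$ cancel the same way, so this part is fine.

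The treatment of the pressure term in \eqref{Eq.low_order_est_rhou} has a genuine gap. After integrating by parts to $-\int(u\cdot\nabla u)\cdot\nabla\dot P\,\dx$ and substituting $\nabla\dot P=f-\rho\ddot u+\Delta\dot u$, you claim the resulting contributions are ``structurally identical to those in $\int f\cdot\dot u$.'' This is false for the term $\int(u\cdot\nabla u)\cdot\rho\ddot u\,\dx$: using $\ddot u=D_t\dot u$ and the transport equation, this term is
\begin{equation*}
\int\rho(u\cdot\nabla u)\cdot D_t\dot u\,\dx=\frac{\mathrm d}{\dt}\int\rho(u\cdot\nabla u)\cdot\dot u\,\dx-\int\rho D_t(u\cdot\nabla u)\cdot\dot u\,\dx,
\end{equation*}
i.e.\ it carries an irreducible total time derivative. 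Such a term cannot be absorbed by $\tfrac14\|\nabla\dot u\|_{L^2}^2$ or handled by Gr\"onwall; it has to go into the energy itself, which is exactly why the paper introduces the corrected quantity $\Psi(t)=\|\sqrt\rho\dot u\|_{L^2}^2-2\int P\,\pa_iu^j\pa_ju^i\,\dx$ (equivalent, after integrating by parts and using $\nabla P=-\rho\dot u+\Delta u$, to subtracting the $\frac{\mathrm d}{\dt}$ term you would generate) together with the two-sided bound $\frac12\|\sqrt\rho\dot u\|_{L^2}^2-C\|\nabla u\|_{L^2}^6\leq\Psi\leq 2\|\sqrt\rho\dot u\|_{L^2}^2+C\|\nabla u\|_{L^2}^6$ to translate the $\Psi$-estimate back to $\|\sqrt\rho\dot u\|_{L^2}^2$. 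This modified-energy step is the essential idea you are missing; without it the differential inequality does not close. (As a side remark, the paper then closes with a quadratic bootstrap on $A=\sup t^{3/4}\|\sqrt\rho\dot u\|_{L^2}$ rather than Gr\"onwall, because the coefficient $\|\nabla u\|_{L^3}^2$ itself involves $\|\sqrt\rho\dot u\|_{L^2}$; your Gr\"onwall with $\gamma(t)=\|\nabla u\|_{L^2}\|\sqrt\rho\dot u\|_{L^2}+\|\nabla u\|_{L^2}^4$ can be made to work, but only once the pressure term has been correctly moved into the energy.)
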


\begin{proof}
    Applying the material derivative $D_t$ to the momentum equation of \eqref{INS}, we get
	\begin{equation*}
			\rho(\pa_t\dot{u}+u\cdot\nabla \dot{u})-\Delta\dot{u}
			=-\pa_k(\pa_ku\cdot \nabla u)-\pa_k u\cdot \nabla \pa_ku
			-(\nabla P_t+u\cdot \nabla (\nabla P)).
	\end{equation*}
	Taking $L^2$ inner product with $\dot{u}$ of the above equation, we get by integration by parts and $\dive u=0$ that
	\begin{equation*}%\label{Eq.Dt_energy}
			\frac12 \frac{\mathrm d}{\dt}\|\sqrt\rho\dot u\|_{L^2}^2+\|\nabla \dot{u}(t)\|^2_{L^2}
			=\int_{\R^3}(\pa_ku\cdot \nabla u)\cdot\pa_k\dot{u}\,\dx+\int_{\R^3}\pa_ku\cdot(\pa_ku\cdot\nabla\dot{u})\,\dx+J,
	\end{equation*}
	where
	$$J(t)\eqdef -\int_{\R^3}(\nabla P_t+u\cdot \nabla (\nabla P))\cdot\dot{u}\,\dx,\quad\forall\ t\in[0, T^*).$$
	Along the same lines as in the proof of Lemma 3.2 in \cite{HSWZ1} and Lemma 3.5 in \cite{HSWZ2}, we have
	\begin{equation*}
		J(t)=\frac{\mathrm d}{\dt}\int_{\R^3} P\pa_iu^j\pa_ju^i\,\dx
		-3\int_{\R^3} P \pa_iu^j\pa_j\dot{u}^i\,\dx+2\int_{\R^3} P\pa_iu^k\pa_ku^j\pa_ju^i\,\dx.
	\end{equation*}
	for all $t\in[0, T^*)$.  Let
	\begin{equation*}
		\Psi(t)\eqdef\|\sqrt\rho\dot u(t)\|_{L^2}^2-2\int_{\R^3} P\pa_iu^j\pa_ju^i\,\dx,\quad\forall\ t\in[0, T^*).
	\end{equation*}
	Then we have {(see (3.25) and (3.26) in \cite{HSWZ2})}\if0
	\begin{align*}
		&\frac12\Psi'(t)+\|\nabla \dot{u}(t)\|^2_{L^2}
=\int_{\R^3}(\pa_ku\cdot \nabla u)\cdot\pa_k\dot{u}\,\dx+\int_{\R^3}\pa_ku\cdot(\pa_ku\cdot\nabla\dot{u})\,\dx\\
&\qquad\qquad-3\int_{\R^3} P \pa_iu^j\pa_j\dot{u}^i\,\dx+2\int_{\R^3} P\pa_iu^k\pa_ku^j\pa_ju^i\,\dx\\
		\leq&\ C\|\nabla u(t)\|_{L^6}\|\nabla u(t)\|_{L^3}\|\nabla \dot u(t)\|_{L^2}+C\|P(t)\|_{L^6}\|\nabla u(t)\|_{L^3}\|\nabla \dot u(t)\|_{L^2}\\&+C\|P(t)\|_{L^6}\|\nabla u(t)\|_{L^3}^2\|\nabla u(t)\|_{L^6}\\
		\leq&\ C\|\nabla u(t)\|_{L^3}\|\nabla \dot u(t)\|_{L^2}\|(\nabla^2u(t), \nabla P(t))\|_{L^2}
        +C\|\nabla u(t)\|_{L^3}^2\|(\nabla^2u(t), \nabla P(t))\|_{L^2}^2\\
		\leq&C\|\nabla u(t)\|_{L^3}\|\nabla \dot u(t)\|_{L^2}\|\sqrt\rho\dot u(t)\|_{L^2}+C\|\nabla u(t)\|_{L^3}^2\|\sqrt\rho\dot u(t)\|_{L^2}^2\\
		\leq&\ \frac12\|\nabla \dot{u}(t)\|^2_{L^2}+C\|\nabla u(t)\|_{L^3}^2\|\sqrt\rho\dot u(t)\|_{L^2}^2,
		\end{align*}
		hence,\fi
  \begin{align}
      \Psi'(t)&+\|\nabla \dot{u}(t)\|^2_{L^2}\leq C\|\nabla u(t)\|_{L^3}^2\|\sqrt\rho\dot u(t)\|_{L^2}^2,\nonumber\\
      (t^{3/2}\Psi)'(t)&+t^{3/2}\|\nabla \dot{u}(t)\|^2_{L^2}\leq \f32 t^{1/2}\Psi(t)+Ct^{3/2}\|\nabla u(t)\|_{L^3}^2\|\sqrt\rho\dot u(t)\|_{L^2}^2\label{Eq.Psi'(t)}
  \end{align}
  for all $t\in[0, T^*)$, and\if0

        On the other hand, we have
		\begin{align*}
			&\left|\int_{\R^3} P\pa_iu^j\pa_ju^i\,\dx\right|\leq \|P(t)\|_{L^6}\|\nabla u(t)\|_{L^3}\|\nabla u(t)\|_{L^2}\\
			&\leq C\|\nabla P(t)\|_{L^2}\|\nabla u(t)\|_{L^2}^{3/2}\|\nabla^2u(t)\|_{L^2}^{1/2}
			\leq C\|\nabla u(t)\|_{L^2}^{3/2}\|\sqrt\rho\dot u(t)\|_{L^2}^{3/2}\\
            &\leq \f12 \|\sqrt\rho\dot u(t)\|_{L^2}^2+C\|\nabla u(t)\|_{L^2}^6,
		\end{align*}
		thus,\fi
		\begin{equation}\label{Eq.Psi_est}
			\frac12\|\sqrt\rho\dot u(t)\|_{L^2}^2-C\|\nabla u(t)\|_{L^2}^6\leq \Psi(t)\leq 2\|\sqrt\rho\dot u(t)\|_{L^2}^2+C\|\nabla u(t)\|_{L^2}^6, \quad\forall\ t\in[0, T^*).
		\end{equation}
   It follows from \eqref{Eq.Psi'(t)} and \eqref{Eq.Psi_est} that
    \begin{align*}
        &(t^{3/2}\Psi)'(t)+t^{3/2}\|\nabla \dot{u}(t)\|^2_{L^2}\\
        &\qquad\leq Ct^{1/2}\|\sqrt\rho\dot u(t)\|_{L^2}^2+Ct^{1/2}\|\nabla u(t)\|_{L^2}^6
        +Ct^{3/2}\|\nabla u(t)\|_{L^3}^2\|\sqrt\rho\dot u(t)\|_{L^2}^2,\quad\forall\ t\in[0, T^*).
    \end{align*}
    Then for the $\sigma_2\in (T_1/2,T_1)$ defined in \eqref{decay3.rho dotu}, we have
    \begin{align}\label{Eq.t^3/2Psi}
        &\sup_{t\in [\sigma_2,2T_1]} t^{3/2}\Psi(t)+\int_{\sigma_2}^{2T_1}t^{3/2}\|\nabla \dot{u}(t)\|^2_{L^2}\,\dt
    \leq C\sigma_2^{3/2}\|\sqrt\rho\dot u(\sigma_2)\|_{L^2}^2\\
    &\quad+C\int_{\sigma_2}^{2T_1}t^{1/2}\|\sqrt\rho\dot u(t)\|_{L^2}^2\,\dt\left(1+\sup_{\sigma_2<t<2T_1}t\|\nabla u(t)\|_{L^3}^2\right)
    +C\int_{\sigma_2}^{2T_1}t^{1/2}\|\nabla u(t)\|_{L^2}^6\,\dt.\nonumber
    \end{align}
     Moreover, it follows from \eqref{estimate3.rho u L3}, \eqref{estimate3.rho dotu} and {\eqref{decay3.rho dotu}} that
    \begin{align*}
        \sigma_2^{3/2}\|\sqrt\rho\dot u(\sigma_2)\|_{L^2}^2+\int_{\sigma_2}^{2T_1}t^{1/2}&\|\sqrt\rho\dot u(t)\|_{L^2}^2\,\dt+
        \int_{\sigma_2}^{2T_1}t^{1/2}\|\nabla u(t)\|_{L^2}^6\,\dt
        \leq C\|u_0\|^2_{\BB},
    \end{align*}
    and we also have (for all $t>0$, also using \eqref{estimate3.rho u L3})
    \begin{align*}
        t\|\nabla u(t)\|_{L^3}^2&\leq Ct^{1/4}\|\nabla u(t)\|_{L^2}t^{3/4}\|\nabla^2u(t)\|_{L^2}\leq C\|u_0\|_{\BB}t^{3/4}\|\sqrt\rho\dot u(t)\|_{L^2}.
    \end{align*}
    Thus, \eqref{Eq.t^3/2Psi} implies that (also using $\|u_0\|_{\BB}<\varepsilon_0<1$)
    \begin{equation}\label{3.30}
        \sup_{t\in (\sigma_2,2T_1)}t^{3/2}\Psi(t)+\int_{\sigma_2}^{2T_1}t^{3/2}\|\nabla \dot{u}(t)\|^2_{L^2}\,\dt\leq C\|u_0\|_{\BB}^2(1+A),
    \end{equation}
    where $A\eqdef\sup_{\sigma_2<s<2T_1}s^{3/4}\|\sqrt\rho\dot u(s)\|_{L^2}$. Thus, by \eqref{Eq.Psi_est}, \eqref{3.30} and  \eqref{estimate3.rho u L3}, for any $\sigma_2<t\leq 2T_1$, we obtain
    \begin{align*}
        &t^{3/2}\|\sqrt\rho\dot u(t)\|_{L^2}^2+\int_{\sigma_2}^ts^{3/2}\|\nabla \dot{u}(s)\|^2_{L^2}\,\ds
        \leq C\|u_0\|_{\BB}^2(1+A)+Ct^{3/2}\|\nabla u(t)\|_{L^2}^6\\
        &\leq C\|u_0\|_{\BB}^2(1+A)+C\|t^{1/4}\nabla u\|_{L^\infty(0,T^*; L^2)}^6
        \leq C\|u_0\|_{\BB}^2(1+A)+C\|u_0\|_{\BB}^6,\\
        &A^2+\int_{\sigma_2}^{2T_1}s^{3/2}\|\nabla \dot{u}(s)\|^2_{L^2}\,\ds\leq C\|u_0\|_{\BB}^2(1+A)+C\|u_0\|_{\BB}^6,\\
       &A^2+\int_{\sigma_2}^{2T_1}s^{3/2}\|\nabla \dot{u}(s)\|^2_{L^2}\,\ds\leq C(\|u_0\|_{\BB}^2+\|u_0\|_{\BB}^4)+C\|u_0\|_{\BB}^6\leq  C\|u_0\|_{\BB}^2.
\end{align*}
This proves \eqref{Eq.low_order_est_rhou}. Note that
\begin{align*}
    \|\nabla {u}(t)\|_{L^{\infty}}
    \leq C\|\nabla {u}(t)\|^{1/2}_{L^{6}}\|\nabla^2 {u}(t)\|^{1/2}_{L^{6}}\leq C\|\sqrt{\rho}\dot{u}(t)\|^{1/2}_{L^{2}}\|\nabla\dot{u}(t)\|^{1/2}_{L^{2}},
\end{align*}
which along with \eqref{estimate3.rho dotu} and \eqref{Eq.low_order_est_rhou} shows
\begin{align*}
   \int_{T_1}^{2T_1}\|\nabla {u}(t)\|_{L^{\infty}}\,\dt&\leq C \|t^{-1/2}\|_{L^2(T_1,2T_1)}\|t^{1/4}\sqrt{\rho}\dot{u}\|^{1/2}_{L^2(T_1,2T_1;L^2)}\|t^{3/4}\nabla\dot{u}(t)\|^{1/2}_{L^2(T_1,2T_1;L^2)}\\
   &\leq C\|u_0\|_{\BB}.
\end{align*}

    This completes the proof of Lemma \ref{lem.nabla u L_infty}.
\end{proof}

\begin{cor}[Estimates for $u_t$]
    Let $\varepsilon_0\in(0,1)$ be given by Lemma \ref{lem.rho u L3}. If $\|u_0\|_{\BB}<\varepsilon_0$, then for any $0<T_1<T^*/2$,
 we have
 \begin{equation}\begin{aligned}\label{Eq.high_order_est_ut}
		\sup_{t\in [T_1,2T_1]}t^{3/2}\|\sqrt\rho u_t\|^2_{L^2}+\|t^{1/4}&\sqrt\rho u_t\|^2_{L^2(T_1,2T_1; L^2)}\\
        &\qquad\qquad\qquad+\|t^{3/4}\nabla u_t\|^2_{L^2(T_1,2T_1;L^2)}\leq C\|u_0\|^2_{\BB},
\end{aligned}\end{equation}
	for some constant $C>0$ depending only on $\|\rho_0\|_{L^{\infty}}$.
\end{cor}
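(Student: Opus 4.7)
The plan is to exploit the identity $u_t=\dot u-u\cdot\nabla u$, which reduces the three desired bounds to the already-proved bounds on $\dot u$ in \eqref{estimate3.rho dotu} and \eqref{Eq.low_order_est_rhou}, provided we can suitably control the convection term. Throughout, the Stokes regularity $\|\nabla^2 u\|_{L^2}\leq C\|\sqrt\rho\dot u\|_{L^2}$ (coming from $-\Delta u+\nabla P=-\rho\dot u$) and the smallness $\|u_0\|_{\BB}<\varepsilon_0<1$ are the key structural tools.

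For the two $\sqrt\rho u_t$ bounds, I would write $\|\sqrt\rho u_t\|_{L^2}\leq \|\sqrt\rho\dot u\|_{L^2}+\|\sqrt\rho u\cdot\nabla u\|_{L^2}$ and bound the convection term exactly as in the proof of Lemma \ref{lem.rho u L3}, using Lorentz H\"older and the embedding $\dot H^1(\R^3)\hookrightarrow L^{6,2}(\R^3)$:
\begin{equation*}
\|\sqrt\rho u\cdot\nabla u\|_{L^2}\leq C\|\sqrt\rho u\|_{L^{3,\infty}}\|\nabla u\|_{L^{6,2}}\leq C\|\sqrt\rho u\|_{L^{3,\infty}}\|\nabla^2 u\|_{L^2}\leq C\|u_0\|_{\BB}\|\sqrt\rho\dot u\|_{L^2}.
\end{equation*}
Since $\|u_0\|_{\BB}$ is small, this yields $\|\sqrt\rho u_t\|_{L^2}\leq C\|\sqrt\rho\dot u\|_{L^2}$, and the first two bounds of the corollary follow immediately from \eqref{Eq.low_order_est_rhou} and \eqref{estimate3.rho dotu} (applied with $T_2=2T_1$), respectively.

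For the $\nabla u_t$ bound, I would expand $\nabla u_t=\nabla\dot u-(\nabla u)\cdot\nabla u-u\cdot\nabla^2 u$ and treat the two nonlinear pieces via Gagliardo--Nirenberg. The first one is controlled by
\begin{equation*}
\|(\nabla u)\cdot\nabla u\|_{L^2}\leq C\|\nabla u\|_{L^4}^2\leq C\|\nabla u\|_{L^2}^{1/2}\|\nabla^2 u\|_{L^2}^{3/2}.
\end{equation*}
Inserting the pointwise decays $\|\nabla u\|_{L^2}\lesssim t^{-1/4}\|u_0\|_{\BB}$ from \eqref{estimate3.rho u L3} and $\|\nabla^2 u\|_{L^2}\lesssim \|\sqrt\rho\dot u\|_{L^2}\lesssim t^{-3/4}\|u_0\|_{\BB}$ (Stokes combined with \eqref{Eq.low_order_est_rhou}), one gets $t^{3/4}\|(\nabla u)\cdot\nabla u\|_{L^2}\leq Ct^{-1/2}\|u_0\|_{\BB}^2$, whose square integrates over $[T_1,2T_1]$ with only a $\ln 2$ loss. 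For the other nonlinear piece I would use the 3D Gagliardo--Nirenberg inequality $\|u\|_{L^\infty}\leq C\|u\|_{L^6}^{2/3}\|\nabla u\|_{L^\infty}^{1/3}$ (valid because $r=\infty>3=d$) together with $\|u\|_{L^6}\leq C\|\nabla u\|_{L^2}$, yielding
\begin{equation*}
\|u\cdot\nabla^2 u\|_{L^2}\leq C\|\nabla u\|_{L^2}^{2/3}\|\nabla u\|_{L^\infty}^{1/3}\|\nabla^2 u\|_{L^2}\leq Ct^{-11/12}\|u_0\|_{\BB}^{5/3}\|\nabla u\|_{L^\infty}^{1/3}.
\end{equation*}
Squaring $t^{3/4}\|u\cdot\nabla^2 u\|_{L^2}$ and integrating over $[T_1,2T_1]$, H\"older in time with exponents $(3,3/2)$ splits the factor $t^{-1/3}\|\nabla u\|_{L^\infty}^{2/3}$ into $(\ln 2)^{1/3}$ times a constant multiple of $\|u_0\|_{\BB}^{2/3}$ (via \eqref{L1_T(Lip)}), giving a bound of $C\|u_0\|_{\BB}^{4}\leq C\|u_0\|_{\BB}^2$. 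Combined with \eqref{Eq.low_order_est_rhou} for $\nabla\dot u$, this closes the $\nabla u_t$ estimate.

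The main obstacle is the $u\cdot\nabla^2 u$ term: the naive Gagliardo--Nirenberg $\|u\|_{L^\infty}\lesssim\|\nabla u\|_{L^2}^{1/2}\|\nabla^2 u\|_{L^2}^{1/2}$ fails in $\R^3$ (as $\dot H^{3/2}\not\hookrightarrow L^\infty$). The resolution is to introduce a fractional power of $\|\nabla u\|_{L^\infty}$ and exploit its $L^1(T_1,2T_1)$ time integrability supplied by \eqref{L1_T(Lip)}; trading that exponent against time integrability via H\"older is precisely what makes the weighted space-time norm finite.
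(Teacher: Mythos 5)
Your proof is correct, and the overall strategy (reduce to $\dot u$ via $u_t=\dot u-u\cdot\nabla u$, control the convection term via Stokes regularity) is the same as the paper's. The treatment of the $\sqrt\rho\,u\cdot\nabla u$ piece matches the paper exactly. For the $\nabla u_t$ bound, both you and the paper split $\nabla(u\cdot\nabla u)$ into $(\nabla u)\cdot\nabla u$ and $u\cdot\nabla^2 u$ and treat the first factor the same way. The difference is in $u\cdot\nabla^2 u$, and here your stated obstacle is spurious. The two-exponent interpolation inequality $\|u\|_{L^\infty(\R^3)}\leq C\|\nabla u\|_{L^2}^{1/2}\|\nabla^2 u\|_{L^2}^{1/2}$ \emph{does} hold in $\R^3$: it follows, for example, from splitting $\widehat u$ into $|\xi|\leq R$ and $|\xi|>R$ and applying Cauchy--Schwarz, then optimizing in $R$. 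The failure of $\dot H^{3/2}(\R^3)\hookrightarrow L^\infty$ does not contradict this, since a two-norm product inequality can hold at the endpoint exponent even when the corresponding single-space embedding fails (indeed, $\dot H^1\cap\dot H^2$ is strictly smaller than $\dot H^{3/2}$). The paper uses precisely this inequality, in the equivalent form $\|u\|_{L^\infty}\leq C\|\nabla u\|_{L^2}^{1/2}\|\nabla u\|_{L^6}^{1/2}$ followed by Sobolev $\|\nabla u\|_{L^6}\lesssim\|\nabla^2 u\|_{L^2}$, and thereby bounds the whole of $\|\nabla(u\cdot\nabla u)\|_{L^2}$ by $C\|\nabla u\|_{L^2}^{1/2}\|\nabla^2 u\|_{L^2}^{3/2}$, which integrates immediately using \eqref{estimate3.rho u L3}, \eqref{estimate3.rho dotu} and \eqref{Eq.low_order_est_rhou}, with no recourse to \eqref{L1_T(Lip)}. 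Your alternative bound $\|u\|_{L^\infty}\leq C\|u\|_{L^6}^{2/3}\|\nabla u\|_{L^\infty}^{1/3}$ is valid and your H\"older-in-time argument using \eqref{L1_T(Lip)} does close, so the proof is correct as written — just longer than necessary, and driven by a misconception about the interpolation inequality.
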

\begin{proof}
Using the inequality $\|f\|_{L^\infty(\R^3)}\leq C\|\nabla f\|_{L^2(\R^3)}^{1/2}\|\nabla f\|_{L^6(\R^3)}^{1/2}$, we have
    \begin{align*}
     &\int_{\R^3}\rho |u\cdot\nabla u|^2\,\dx\leq \|\sqrt{\rho} u(t)\|^2_{L^{3,\infty}}\|\nabla u(t)\|^2_{L^{6,2}}\leq C\|\sqrt{\rho} u(t)\|^2_{L^{3,\infty}} \|\nabla^2 u(t)\|_{L^2}^{2};\\
     &\|\nabla(u\cdot\nabla u)(t)\|_{L^2}\leq \|\nabla u(t)\|_{L^3}\|\nabla u(t)\|_{L^6}+\|u(t)\|_{L^\infty}\|\nabla^2 u(t)\|_{L^2}
     \leq C\|\nabla u(t)\|_{L^2}^{\f12}\|\nabla^2u(t)\|_{L^2}^{\f32}.
    \end{align*}
By \eqref{estimate3.rho u L3}, \eqref{estimate3.rho dotu}, \eqref{Eq.low_order_est_rhou} and the Stokes estimate, we infer that for any $t\in [T_1,2T_1]$,
    \begin{align*}
        t^{3/2}\|\sqrt\rho u\cdot\nabla u(t)\|_{L^2}^2
        &\leq Ct^{3/2}\|\sqrt{\rho} u(t)\|^2_{L^{3,\infty}} \|\nabla^2 u(t)\|_{L^2}^{2}\\
        &\leq Ct^{3/2}\|\sqrt{\rho} u(t)\|^2_{L^{3,\infty}} \|\sqrt{\rho}\dot{u}\|_{L^2}^{2}\leq C\|u_0\|_{\BB}^4,\\
        \int_{T_1}^{2T_1}t^{1/2}\|\sqrt\rho u\cdot\nabla u(t)\|_{L^2}^2\,\dt&\leq \|\sqrt{\rho} u\|^2_{L^{\infty}(0,T^*;L^{3,\infty})} \int_{T_1}^{2T_1}t^{1/2}\|\sqrt{\rho}\dot{u}\|_{L^2}^{2}\,\dt
        \leq C\|u_0\|_{\BB}^4.
    \end{align*}
        And we can also obtain by using the Stokes estimate that
        \begin{align*}
        &\int_{T_1}^{2T_1}t^{3/2}\|\nabla(u\cdot\nabla u)(t)\|_{L^2}^2\,\dt\leq C\int_{T_1}^{2T_1}t^{3/2}\|\nabla u(t)\|_{L^2}\|\nabla^2u(t)\|_{L^2}^3\,\dt\\
        &\leq C\|t^{1/4}\nabla u\|_{L^\infty(0,T^*; L^2)}\|t^{3/4}\sqrt\rho\dot u\|_{L^\infty(T_1,2T_1; L^2)}\int_{T_1}^{2T_1}t^{1/2}\|\sqrt\rho\dot u(t)\|_{L^2}^2\,\dt\leq C\|u_0\|_{\BB}^4.
    \end{align*}
    Therefore, \eqref{Eq.high_order_est_ut} follows from \eqref{estimate3.rho dotu}, \eqref{Eq.low_order_est_rhou} and $u_t=\dot u-u\cdot\nabla u$.
\end{proof}

\begin{cor}[Estimates for $\sqrt{\rho_0}u_t$]
    Let $\varepsilon_0\in(0,1)$ be given by Lemma \ref{lem.rho u L3}. If $\|u_0\|_{\BB}<\varepsilon_0$, then for any $0<T<2T<T^*$
	\begin{equation}\label{Eq.rho_0ut_est}
	\int_{T}^{2T}\|\sqrt{\rho_0} u_t\|^2_{L^2}\,\dt\leq CT^{-1/2}\|u_0\|^2_{\BB}
	\end{equation}
	for some constant $C>0$ depending only on $\|\rho_0\|_{L^{\infty}}$.
\end{cor}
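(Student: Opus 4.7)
The plan is to reduce the $\sqrt{\rho_0}$--weighted bound to the $\sqrt{\rho}$--weighted one already delivered by \eqref{Eq.high_order_est_ut}. Writing $\rho_0=\rho+(\rho_0-\rho)$ and noting that $t^{1/2}\geq T^{1/2}$ on $[T,2T]$, the second part of \eqref{Eq.high_order_est_ut} immediately gives
\[
\int_T^{2T}\|\sqrt\rho u_t\|_{L^2}^2\,\dt\leq T^{-1/2}\int_T^{2T}t^{1/2}\|\sqrt\rho u_t\|_{L^2}^2\,\dt\leq CT^{-1/2}\|u_0\|^2_{\BB},
\]
so the whole task is to control
\[
\mathrm{Err}(T)\eqdef\int_T^{2T}\left|\int_{\R^3}(\rho_0(x)-\rho(t,x))|u_t(t,x)|^2\,\dx\right|\,\dt.
\]

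To estimate $\mathrm{Err}(T)$ I would convert the density discrepancy into a spatial divergence. Because $\dive u=0$, the transport equation reads $\pa_t\rho=-\dive(u\rho)$, hence after integration in time
\[
\rho_0(x)-\rho(t,x)=\int_0^t\dive_x(u\rho)(s,x)\,\ds.
\]
Substituting, applying Fubini, and integrating by parts in $x$ (boundary terms vanish by the decay of the smooth solution) yields
\[
\int(\rho_0-\rho(t))|u_t(t)|^2\,\dx=-2\int_0^t\int(u\rho)(s,x)\cdot\bigl[u_t(t,x)\cdot\nabla u_t(t,x)\bigr]\,\dx\,\ds.
\]

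Next I would apply the triple H\"older inequality in Lorentz spaces $L^{3,\infty}\cdot L^{6,2}\cdot L^2\hookrightarrow L^1$ (legitimate since $1/3+1/6+1/2=1$ and $0+1/2+1/2\geq1$), together with the sharp Sobolev embedding $\dot H^1(\R^3)\hookrightarrow L^{6,2}(\R^3)$ and the trivial factorization $u\rho=\sqrt\rho\cdot\sqrt\rho\, u$ with $\|\sqrt\rho\|_{L^\infty}\leq\|\rho_0\|_{L^\infty}^{1/2}$:
\[
\left|\int(\rho_0-\rho(t))|u_t(t)|^2\,\dx\right|\leq C\|\nabla u_t(t)\|_{L^2}^2\int_0^t\|\sqrt\rho u(s)\|_{L^{3,\infty}}\,\ds\leq Ct\|u_0\|_{\BB}\|\nabla u_t(t)\|_{L^2}^2,
\]
where the last step uses \eqref{estimate3.rho u L3}. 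Integrating over $[T,2T]$, using $t\leq T^{-1/2}t^{3/2}$ on that interval, and invoking the $t^{3/4}\nabla u_t$ bound of \eqref{Eq.high_order_est_ut},
\[
\mathrm{Err}(T)\leq C\|u_0\|_{\BB}\int_T^{2T}t\|\nabla u_t\|_{L^2}^2\,\dt\leq C\|u_0\|_{\BB}\cdot T^{-1/2}\|u_0\|^2_{\BB}\leq CT^{-1/2}\|u_0\|^2_{\BB},
\]
the last inequality by $\|u_0\|_{\BB}<\varepsilon_0<1$. Combined with the control of $\int_T^{2T}\|\sqrt\rho u_t\|_{L^2}^2\,\dt$ above, this yields \eqref{Eq.rho_0ut_est}.

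The main obstacle is conceptual rather than technical: $\rho_0-\rho(t,\cdot)$ is \emph{not} pointwise small (the two densities differ by a Lagrangian transport, which destroys any crude $|\rho_0-\rho|\to0$ bound), so a direct estimate via $\rho_0\leq\|\rho_0\|_{L^\infty}$ and some unweighted $L^2$ bound on $u_t$ cannot succeed in a way that is independent of the lower bound $\rho_*$. The decisive observation is that incompressibility rewrites the density correction as a \emph{spatial divergence} in $u\rho$, which can then be paired against the critical quantity $\sqrt\rho u\in L^\infty_tL^{3,\infty}_x$ and the high-order weight $\|t^{3/4}\nabla u_t\|_{L^2_tL^2}$ already available from \eqref{Eq.high_order_est_ut}; all remaining steps are H\"older--Sobolev bookkeeping.
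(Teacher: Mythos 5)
Your proof is correct and follows essentially the same route as the paper's. The paper encodes the density correction through the dual norm $\|\rho(t)-\rho_0\|_{\dot W^{-1,(3,\infty)}}\leq Ct\|u_0\|_{\BB}$ (their \eqref{Eq.rho_W-13}) and then pairs against $\||u_t|^2\|_{\dot W^{1,(3/2,1)}}\leq 2\|\nabla u_t\|_{L^2}\|u_t\|_{L^{6,2}}$, whereas you unfold that duality into an explicit integration by parts against $\dive(\rho u)$ and a Lorentz–Hölder inequality; after unpacking, the two computations are term-for-term identical, and both finish with $C\|u_0\|_{\BB}\int_T^{2T}t\|\nabla u_t\|_{L^2}^2\,\dt$ controlled via \eqref{Eq.high_order_est_ut}.
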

\begin{proof}
By the density equation of \eqref{INS} and \eqref{estimate3.rho u L3}, we get\footnote{The definition of $\dot{W}^{-1,(3,\infty)}$ can be found in Appendix \ref{Appen_proof}, Definition \ref{def.W-1,3,infty}.}
\begin{align}\label{Eq.rho_W-13}
\|\rho(t)-\rho_0\|_{\dot{W}^{-1,(3,\infty)}}\leq \int_0^t \|\rho u(s)\|_{L^{3,\infty}}\,\ds\leq t\|\rho u\|_{L^{\infty}(0,t;L^{3,\infty})}\leq Ct\|u_0\|_{\BB}.
\end{align}
Note that
\begin{align*}
\int_{T}^{2T}\int_{\R^3} \rho_0 |u_t|^2\,\dx\,\dt=\int_{T}^{2T}\int_{\R^3} \rho(t) |u_t|^2\,\dx\,\dt-\int_{T}^{2T}\int_{\R^3} (\rho(t)-\rho_0) |u_t|^2\,\dx\,\dt.
\end{align*}
By duality and \eqref{Eq.rho_W-13}, we obtain
\begin{align*}
&\int_{T}^{2T}\Bigl|\int_{\R^3} (\rho(t)-\rho_0) |u_t|^2\,\dx\Bigr|\,\dt\leq \int_{T}^{2T}\|\rho(t)-\rho_0\|_{\dot{W}^{-1,(3,\infty)}}\||u_t|^2\|_{\dot{W}^{1,(\f32,1)}}\,\dt\\
&\leq 2\int_{T}^{2T}\|\rho(t)-\rho_0\|_{\dot{W}^{-1,(3,\infty)}}\|\nabla u_t\|_{L^2}\|u_t\|_{L^{6,2}}\,\dt
\leq C\|u_0\|_{\BB}\int_{T}^{2T}t\|\nabla u_t\|^2_{L^2}\,\dt.
\end{align*}
Therefore, \eqref{Eq.rho_0ut_est} follows from \eqref{Eq.high_order_est_ut}.
\end{proof}

\subsection{Existence part of Theorem \ref{thm.self similar}}\label{Subsec.4.2}

This subsection is devoted to the proof of global existence part of the solution in Theorem \ref{thm.self similar}.
This is similar to Subsection \ref{Subsec.2.2}, and here we also focus on the proof of \eqref{Eq.convergence}.
\if0
We consider (where $\varepsilon\in(0,1)$)
$$ u_0^{\varepsilon}\in C^{\infty}(\R^3)
\andf \rho_0^{\varepsilon}\in C^{\infty}(\R^3) $$
such that
\begin{equation*}
u_0^{\varepsilon}\to u_0 \text{~in~} \BB(\R^3), \quad \rho_0^{\varepsilon}\rightharpoonup \rho_0 \text{~in~} L^{\infty} \text{~weak-*},
\andf~ \rho_0^{\varepsilon}\to \rho_0 \text{~in~} L^{p}_{\text{loc}}(\R^3) \text{~if~} p<\infty,
\end{equation*}
as $\varepsilon\to 0+$.
In light of the classical strong solution theory for the system \eqref{INS}, there exists a unique global smooth solution $(\rho^{\varepsilon}, u^{\varepsilon}, P^{\varepsilon})$
corresponding to data $(\rho_0^{\varepsilon},u_0^{\varepsilon})$. Thus, the triple $(\rho^{\varepsilon}, u^{\varepsilon}, P^{\varepsilon})$  satisfies all the a priori estimates of Subsection \ref{Subsec.4.1} uniformly with respect to $\varepsilon\in(0,1)$. Hence, $(\rho^{\varepsilon}, u^{\varepsilon})$ converges weakly (or weakly-*) to a limit $(\rho, u)$ as $\varepsilon\to0+$, up to subsequence. To show that the limit solves \eqref{INS} weakly, in view of standard compactness arguments, it suffices to prove that

\begin{equation}\label{3Eq.convergence}
    \lim_{\varepsilon\to0+}\int_0^t\langle \rho^\varepsilon u^\varepsilon\otimes u^\varepsilon(s)-\rho u\otimes u(s), \nabla\varphi\rangle\,\ds=0\quad\forall\ t>0
\end{equation}
for any divergence-free function $\varphi\in C_c^\infty([0, +\infty)\times\R^3)$.
\fi
 Since
\begin{align*}
    \int_0^t|\langle \rho^\varepsilon u^\varepsilon\otimes u^\varepsilon(s),\nabla\varphi\rangle|\,\ds
    &\leq\|\sqrt{\rho^{\varepsilon}}u^\varepsilon\|^2_{L^{\infty}(0,t;L^{3,\infty})}\|\nabla \varphi\|_{L^{1}(0,t;L^{3,1})}\leq Ct,\\
    \int_0^t|\langle \rho u\otimes u(s), \nabla\varphi\rangle|\,\ds&\leq \|\sqrt{\rho}u\|^2_{L^{\infty}(0,t;L^{3,\infty})}\|\nabla \varphi\|_{L^{1}(0,t;L^{3,1})}\leq Ct,
\end{align*}
for all $\varepsilon\in(0,1)$, $t>0$, where $C>0$ is a constant depending on $\|\rho_0\|_{L^{\infty}},\|u_0\|_{\BB}$.
Now we fix $t>0$. Let $\eta>0$. Taking $\delta_0{\eqdef}\min\{t/2, \eta/(4C)\}>0$, we have
\[\int_0^{\delta_0}\left|\langle \rho^\varepsilon u^\varepsilon\otimes u^\varepsilon(s)-\rho u\otimes u(s), \nabla\varphi\rangle\right|\,\ds<\frac\eta2,\quad\forall\ \varepsilon\in(0,1).\]
To prove \eqref{Eq.convergence}, as \eqref{Eq.convergence_delta1} can be also established similarly, it suffices to show \eqref{Eq.convergence_delta2}.

Thanks to \eqref{Eq.high_order_est_ut}, we infer that $\{t^{3/4}\p_tu^{\varepsilon}\}$ is uniformly bounded in $L^{2}(\delta_0,t;L^6)$, from which we know $\{\p_tu^{\varepsilon}\}$ is uniformly bounded in $L^{2}(\delta_0,t;L^6)$.
By \eqref{estimate3.rho dotu} and \eqref{estimate3.rho u L3}, we get that $t^{1/4}u^{\varepsilon}$ is uniformly bounded in $L^{2}(\delta_0,t;\dot{H}^1\cap\dot{H}^2)$, which implies  that $u^{\varepsilon}$ is uniformly bounded in $L^{2}(\delta_0,t;\dot{H}^1\cap\dot{H}^2)$. Then by the Ascoli-Arzela theorem, we conclude that
\begin{align*}
    &u^{\varepsilon}\to u \text{~in~} L^{2}_{loc}(\delta_0,t;L^6_{loc}),\\
     &\nabla u^{\varepsilon}\to \nabla u \text{~in~} L^{2}_{loc}(\delta_0,t;L^2_{loc}),
\end{align*}
 which along with the uniform boundedness of $\rho^\varepsilon $ in $L^{\infty}$ implies
\begin{equation*}
    \lim_{\varepsilon\to0+}\int_{\delta_0}^t\left|\langle \rho^\varepsilon (u^\varepsilon\otimes u^\varepsilon-u\otimes u)(s), \nabla\varphi\rangle\right|\,\ds=0.
\end{equation*}

\subsection{Uniqueness part of Theorem \ref{thm.self similar}}\label{Subsec.4.3}
The goal of this subsection is to prove the uniqueness part of Theorem \ref{thm.self similar}. Indeed, it will be a consequence of the following proposition.
\begin{prop}\label{uni.prop}
   Let $T>0$. Let $(\rho,u,\nabla P)$ and $(\bar\rho,\bar u, \nabla \bar P)$ be two solutions of \eqref{INS} on $[0,T]\times\R^3$ corresponding to the same initial data. Let $C>0$ be a constant depending only on $\|\rho_0\|_{L^{\infty}}$. In addition, we assume that there exists a small quantity $\varepsilon\in (0,1/2)$ such that for any $0<s<t\leq T$, the following conditions hold
\begin{itemize}
    \item $\|\sqrt{\rho}(\bar u-u)\|_{L^{\infty}(0,t;L^2)}\leq Ct^{1/4}$;
    \item $\|\nabla \bar u-\nabla u\|_{L^2(0,t;L^2)}\leq Ct^{1/4}$;
    \item $\|\nabla \bar u\|_{L^1(s,t;L^{\infty})}\leq C\varepsilon \bigl(\ln(t/s)+1\bigr)$;
    \item $\|\nabla \bar u\|_{L^4(s,t;L^2)}\leq C\varepsilon [\ln(t/s)]^{1/4}$;
    \item $\|\tau^{3/4}\nabla \dot{\bar u}(\tau)\|_{L^2(s,t;L^2)}\leq C\varepsilon \bigl([\ln(t/s)]^{1/2}+1\bigr)$, where $\dot{\bar{u}}{\eqdef}\partial_t\bar{u}+\bar{u}\cdot\nabla \bar{u}$.
\end{itemize}
Then $(\rho,u,\nabla P)=(\bar\rho,\bar u, \nabla\bar P)$ on $[0,T]\times \R^3$.
\end{prop}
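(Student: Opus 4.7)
The plan is to adapt the $\dot W^{-1,p}$-duality strategy from the uniqueness part of Theorem \ref{thm.INS_Cannone-Meyer-Planchon} to the critical Hilbert setting, controlling $\delta\!\rho:=\rho-\bar\rho$ in $\dot H^{-1}$ and performing an energy estimate for $\delta\!u:=u-\bar u$ that carefully avoids any $L^\infty$-norm on $\delta\!u$ or $\dot{\bar u}$, both of which may fail when vacuum is present. Writing the analogue of \eqref{Eq.delta u} for $(\delta\!\rho,\delta\!u,\delta\!P)$, the first two hypotheses of the proposition directly give the a priori bound
\[
\|\sqrt\rho\delta\!u\|_{L^\infty(0,t;L^2)}+\|\nabla\delta\!u\|_{L^2(0,t;L^2)}\leq Ct^{1/4},
\]
and in particular $\|\sqrt\rho\delta\!u(\delta)\|_{L^2}^2\leq C\delta^{1/2}\to 0$ as $\delta\to 0^+$. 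This will let us initiate the energy analysis at a small positive time $\delta>0$ rather than at $t=0$, and then pass to the limit.

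First I would derive the control of $\delta\!\rho$ in $\dot H^{-1}$. For any $\varphi\in C_c^\infty(\R^3)$, solving the backward transport equation $\partial_\tau\phi+\bar u\cdot\nabla\phi=0$ with $\phi(t,\cdot)=\varphi$ and mimicking the manipulation leading to \eqref{delta rho.phi} yields
\[
\langle\delta\!\rho(t),\varphi\rangle=\int_0^t\int_{\R^3}\rho\,\delta\!u\cdot\nabla\phi(\tau,x)\,\dx\,\mathrm{d}\tau.
\]
An $L^2$-energy estimate for $\nabla\phi$ (using $\dive\bar u=0$) combined with the third hypothesis on $\|\nabla\bar u\|_{L^1_t(L^\infty)}$ gives $\|\nabla\phi(\tau)\|_{L^2}\leq C(t/\tau)^{C\varepsilon}\|\nabla\varphi\|_{L^2}$, so that
\[
\|\delta\!\rho(t)\|_{\dot H^{-1}}\leq Ct\,\|\sqrt\rho\delta\!u\|_{L^\infty(0,t;L^2)}\leq Ct^{5/4}\qquad\forall\,t\in(0,T].
\]

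Next I would test the momentum equation for $\delta\!u$ against $\delta\!u$ itself and use the continuity equation for $\rho$ to get
\[
\tfrac12\tfrac{\mathrm{d}}{\mathrm{d}t}\|\sqrt\rho\delta\!u\|_{L^2}^2+\|\nabla\delta\!u\|_{L^2}^2=I_1+I_2,
\]
with $I_1:=-\int_{\R^3}\delta\!\rho\,\dot{\bar u}\cdot\delta\!u\,\dx$ and $I_2:=-\int_{\R^3}\rho\delta\!u\cdot\nabla\bar u\cdot\delta\!u\,\dx$. For $I_2$, H\"older with exponents $(4,4,2)$, the Gagliardo--Nirenberg-type interpolation $\|\sqrt\rho\delta\!u\|_{L^4}\lesssim\|\sqrt\rho\delta\!u\|_{L^2}^{1/4}\|\nabla\delta\!u\|_{L^2}^{3/4}$ (which uses $\|\sqrt\rho\delta\!u\|_{L^6}\lesssim\|\nabla\delta\!u\|_{L^2}$ since $\rho\in L^\infty$), and Young's inequality yield
\[
|I_2|\leq\tfrac14\|\nabla\delta\!u\|_{L^2}^2+C\|\sqrt\rho\delta\!u\|_{L^2}^2\|\nabla\bar u\|_{L^2}^4.
\]
For $I_1$, the $\dot H^{-1}/\dot H^1$ duality gives $|I_1|\leq\|\delta\!\rho\|_{\dot H^{-1}}\|\nabla(\dot{\bar u}\cdot\delta\!u)\|_{L^2}$; distributing the derivative by Leibniz and using the Sobolev embedding $\dot H^1\hookrightarrow L^6$ for both $\delta\!u$ and $\dot{\bar u}$, combined with the $Ct^{5/4}$-bound on $\|\delta\!\rho\|_{\dot H^{-1}}$ and the weight $t^{3/4}$ built into the fifth hypothesis, lets us absorb enough into the dissipation to reach
\[
|I_1|\leq\tfrac14\|\nabla\delta\!u\|_{L^2}^2+C\,t^{3/2}\|\nabla\dot{\bar u}\|_{L^2}^2\,\|\sqrt\rho\delta\!u\|_{L^2}^2.
\]
Together these produce the differential inequality
\[
\tfrac{\mathrm{d}}{\mathrm{d}t}\|\sqrt\rho\delta\!u\|_{L^2}^2+\|\nabla\delta\!u\|_{L^2}^2\leq C\gamma(t)\,\|\sqrt\rho\delta\!u\|_{L^2}^2,\qquad \gamma(t):=\|\nabla\bar u\|_{L^2}^4+t^{3/2}\|\nabla\dot{\bar u}\|_{L^2}^2.
\]

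The fourth and fifth hypotheses then bound $\int_\delta^T\gamma(t)\,\mathrm{d}t\leq C\varepsilon^2\bigl(\ln(T/\delta)+1\bigr)$, so Gr\"onwall's inequality on $[\delta,T]$ combined with $\|\sqrt\rho\delta\!u(\delta)\|_{L^2}^2\leq C\delta^{1/2}$ yields
\[
\sup_{t\in[\delta,T]}\|\sqrt\rho\delta\!u(t)\|_{L^2}^2+\int_\delta^T\|\nabla\delta\!u(t)\|_{L^2}^2\,\mathrm{d}t\leq C\delta^{1/2}(T/\delta)^{C\varepsilon^2}.
\]
Choosing $\varepsilon$ small enough that $C\varepsilon^2<1/2$ and letting $\delta\to 0^+$ forces $\delta\!u\equiv 0$ on $(0,T]$, whence the $\dot H^{-1}$-bound on $\delta\!\rho$ in turn gives $\delta\!\rho\equiv 0$ as well. \emph{The hardest step will be the estimate of $I_1$}: producing exactly the form $t^{3/2}\|\nabla\dot{\bar u}\|_{L^2}^2$ in $\gamma(t)$ while forgoing any $L^\infty$-norm on $\delta\!u$ or $\dot{\bar u}$ (unavailable in the presence of vacuum), which requires carefully marrying the $Ct^{5/4}$ factor carried by $\|\delta\!\rho\|_{\dot H^{-1}}$ with the $t^{3/4}$-weighted $L^2_t(L^2)$ control on $\nabla\dot{\bar u}$ supplied by the fifth hypothesis.
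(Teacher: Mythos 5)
The overall architecture of your argument mirrors the paper's — transport estimate for $\delta\!\rho$ in a negative Sobolev space, energy estimate for $\delta\!u$ splitting into $I_1$ and $I_2$, Gr\"onwall on $[\delta,T]$, and $\delta\to 0$ — and your $I_2$ bound is correct. But your treatment of $I_1$ via the $\dot H^{-1}/\dot H^1$ duality has a genuine gap that the Lorentz-space setup in the paper is specifically designed to avoid. After the Leibniz rule, you must bound
\begin{equation*}
\|\nabla(\dot{\bar u}\cdot\delta\!u)\|_{L^2}\leq\|\nabla\dot{\bar u}\cdot\delta\!u\|_{L^2}+\|\dot{\bar u}\cdot\nabla\delta\!u\|_{L^2}.
\end{equation*}
The first term needs $\|\nabla\dot{\bar u}\|_{L^3}\|\delta\!u\|_{L^6}$ or $\|\nabla\dot{\bar u}\|_{L^2}\|\delta\!u\|_{L^\infty}$; the second needs $\|\dot{\bar u}\|_{L^6}\|\nabla\delta\!u\|_{L^3}$ or $\|\dot{\bar u}\|_{L^\infty}\|\nabla\delta\!u\|_{L^2}$. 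None of these norms is available from the hypotheses, and you explicitly wish to forgo $L^\infty$ control of $\delta\!u$ and $\dot{\bar u}$. Product of $L^2\cdot L^6$ lands in $L^{3/2}$, not $L^2$, so the Sobolev embedding $\dot H^1\hookrightarrow L^6$ you cite does not close the estimate. The inequality you write down, $|I_1|\leq\frac14\|\nabla\delta\!u\|_{L^2}^2+Ct^{3/2}\|\nabla\dot{\bar u}\|_{L^2}^2\|\sqrt\rho\delta\!u\|_{L^2}^2$, therefore does not follow from the stated tools.

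The paper resolves exactly this obstruction by replacing $\dot H^{-1}/\dot H^1$ with the Lorentz refinement $\dot W^{-1,(3,\infty)}/\dot W^{1,(3/2,1)}$. Then the Lorentz--H\"older rule $L^2\cdot L^{6,2}\hookrightarrow L^{3/2,1}$ together with the sharp Sobolev embedding $\dot H^1\hookrightarrow L^{6,2}$ gives $\|\dot{\bar u}\cdot\delta\!u\|_{\dot W^{1,(3/2,1)}}\leq C\|\nabla\dot{\bar u}\|_{L^2}\|\nabla\delta\!u\|_{L^2}$, which is precisely what the $I_1$ estimate needs. This choice of spaces also fixes the time weight: controlling $\rho\delta\!u$ in $L^{3,\infty}$ by interpolation $\|\rho\delta\!u\|_{L^{3,\infty}}\leq C\|\sqrt\rho\delta\!u\|_{L^2}^{1/2}\|\nabla\delta\!u\|_{L^2}^{1/2}$ yields an $L^4$-in-time bound and the natural factor $t^{3/4}$ in $\|\delta\!\rho(t)\|_{\dot W^{-1,(3,\infty)}}\leq Ct^{3/4}B(t)$, which after Young's inequality reproduces exactly $t^{3/2}\|\nabla\dot{\bar u}\|_{L^2}^2$ with no spurious powers of $t$. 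Your $\dot H^{-1}$ transport bound produces a factor $t$ rather than $t^{3/4}$, so even if the duality step were repaired one would be left with an extra $t^{1/2}$ in $\gamma(t)$. In short, the $\dot W^{-1,(3,\infty)}/\dot W^{1,(3/2,1)}$ duality and the $L^4$-in-time interpolation of $\|\rho\delta\!u\|_{L^{3,\infty}}$ are not optional refinements but the devices that make the proof close; the plain $\dot H^{-1}$ route you propose does not.
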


\begin{proof}
Denote $\delta\!\rho{\eqdef}\rho-\bar{\rho}$ and $\delta\!u{\eqdef}u-\bar{u}$. Then ($\delta\!\rho,\delta\!u$) satisfies \eqref{Eq.delta u}.
Let $T>0$. For all $t\in(0,T]$, we set
\begin{align}
&A(t){\eqdef}\sup_{s\in (0,t]}s^{-\f34}\|\delta\!\rho(s)\|_{\dot{W}^{-1,(3,\infty)}},\label{A_t}\\
&B(t){\eqdef}\Bigl(\sup_{s\in [0,t]}\|\sqrt{\rho}\delta\!u(s)\|^2_{L^2}+\|\nabla \delta\!u\|^2_{L^2(0,t;L^2)}\Bigr)^{1/2}.\label{B_t}
\end{align}
Based on our assumptions, we derive the crucial estimate
\begin{align}\label{B_leq_t1/4}
    B(t)\leq Ct^{1/4}, \quad \forall~ 0<t\leq T.
\end{align}

For any $\varphi\in C^{\infty}_c(\R^3)$, by the classical theory on transport equations (\cite{BCD}, Theorem 3.2), there exists a unique $\phi(s,x)$ solving
\begin{align*}
\partial_s\phi+\bar{u}\cdot\nabla\phi=0,\quad (s,x)\in(0,t]\times\R^3, \qquad \phi(t,x)=\varphi(x),
\end{align*}
 with the estimate
\begin{align}\label{uniqueness3.nabla varphi}
\|\nabla\phi(s)\|_{L^{\f32,1}}\leq C\|\nabla\varphi\|_{L^{\f32,1}}\exp \int_s^t\|\nabla \bar{u}(\tau)\|_{L^{\infty}}\,\mathrm{d}\tau,\quad\forall\ s\in(0, t].
\end{align}
Note that
\begin{align}\label{uniqueness3.exp.inte}
 \int_0^t  e^{C\varepsilon\bigl(\ln(t/s)+1\bigr)} \,\ds=te^{C\varepsilon}\int_0^1 e^{C\varepsilon|\ln(\tau)|} \,\mathrm{d}\tau\leq Ct\int_0^1\tau^{-C\varepsilon}\,\mathrm{d}\tau\leq Ct.
\end{align}
Similar to \eqref{delta rho.phi}, we have
\begin{align*}
\frac{\mathrm{d}}{\ds} \langle\delta\!\rho(s), \phi(s)\rangle
&=\langle -\dive\{ \delta\!\rho\bar{u}\},\phi\rangle-\langle \dive \{\rho \delta\!u\},\phi\rangle-\langle \delta\!\rho,\bar{u}\cdot\nabla\phi\rangle
=\langle \rho\delta\!u,\nabla\phi\rangle.
\end{align*}
Integrating over $(0,t)$, and using \eqref{uniqueness3.nabla varphi} and \eqref{uniqueness3.exp.inte} yield that
\begin{align}
|\langle\delta\!\rho(t), \phi(t)\rangle|&\leq
\int_0^t |\langle \rho \delta\!u,\nabla\phi\rangle|\,\ds \leq \int_0^t \|\rho\delta\!u(s)\|_{L^{3,\infty}}\|\nabla\phi(s)\|_{L^{\f32,1}}\,\ds\nonumber\\
&\leq Ct^{\f34}\|\nabla\varphi\|_{L^{\f32,1}}\|\rho\delta\!u\|_{L^4(0,t;L^{3,\infty})}\label{b}.
\end{align}
By the interpolation inequality and Sobolev embedding, we have
\begin{align*}
\int_0^t\|\rho\delta\!u\|^4_{L^{3,\infty}}\,\ds \leq \int_0^t\|\rho\delta\!u\|^2_{L^2}\|\rho\delta\!u\|^2_{L^6}\,\ds\leq C\|\sqrt{\rho}\delta\!u\|^2_{L^{\infty}(0,t;L^2)}\|\nabla \delta\!u\|^2_{L^2(0,t;L^2)}.
\end{align*}
Then it follows from \eqref{A_t}, \eqref{B_t} and \eqref{b} that
\begin{align}\label{Eq.A<B}
  A(t)\leq CB(t),\quad\forall\ t\in(0, T].
\end{align}

Next, testing \eqref{Eq.delta u} against $\delta\!u$ gives  the following energy estimate
 \begin{align}
			\sup_{t\in[\delta,T]}\frac12 \|\sqrt\rho\delta\!u(t)\|_{L^2}^2&+\int_{\delta}^T\|\nabla \delta\!u(t)\|^2_{L^2}\,\dt\nonumber\\
			&\leq  \|\sqrt\rho\delta\!u(\delta)\|_{L^2}^2
+2\int_{\delta}^T\Bigl|\int_{\R^3}\delta\!\rho\dot{\bar{u}}\cdot\delta\!u\,\dx\Bigr|\,\dt+2\int_{\delta}^T\int_{\R^3}\bigl|(\rho\delta\!u\cdot\nabla\bar{u})\cdot\delta\!u\bigr|\,\dx\,\dt.\label{uniquness3.ener}
	\end{align}
By H\"older's inequality and Sobolev embedding, we have
\begin{equation}\begin{aligned}\label{uniqueness3.1}
 \left|\int_{\R^3}(\rho\delta\!u\cdot\nabla\bar{u})\cdot\delta\!u\,\dx\right|&\leq \|\rho\delta\!u\|_{L^{3,\infty}}\|\nabla\bar{u}\|_{L^2}\|\delta\!u\|_{L^{6,2}}\leq C\|\sqrt{\rho}\delta\!u\|^{\f12}_{L^2}\|\nabla\delta\!u\|^{\f32}_{L^2}\|\nabla\bar{u}\|_{L^2}\\
 &\leq \f18 \|\nabla\delta\!u\|^{2}_{L^2}+C \|\sqrt{\rho}\delta\!u\|^{2}_{L^2}\|\nabla\bar{u}\|^4_{L^2}.
\end{aligned}\end{equation}
By duality, Sobolev embedding and Young's inequality, we have
\begin{align}
&\left|\int_{\R^3}\delta\!\rho\dot{\bar{u}}\cdot\delta\!u\,\dx\right|\leq \|\delta\!\rho\|_{\dot{W}^{-1,(3,\infty)}}\|\dot{\bar{u}}\cdot\delta\!u\|_{\dot{W}^{1,(3/2,1)}}\nonumber\\
 &\leq \|\delta\!\rho\|_{\dot{W}^{-1,(3,\infty)}}(\|\nabla\dot{\bar{u}}\|_{L^2}\|\delta\!u\|_{L^{6,2}}+\|\dot{\bar{u}}\|_{L^{6,2}}\|\nabla\delta\!u\|_{L^2})\leq C\|\delta\!\rho\|_{\dot{W}^{-1,(3,\infty)}}\|\nabla\dot{\bar{u}}\|_{L^2}\|\nabla\delta\!u\|_{L^2}\label{uniqueness3.2}\\
 &\leq \f18 \|\nabla\delta\!u\|^{2}_{L^2}+C t^{-\f32}\|\delta\!\rho(t)\|^2_{\dot{W}^{-1,(3,\infty)}} \cdot t^{\f32}\|\nabla\dot{\bar{u}}\|^2_{L^2}\leq\f18 \|\nabla\delta\!u(t)\|^{2}_{L^2}+C A^2(t)t^{\f32}\|\nabla\dot{\bar{u}}(t)\|^2_{L^2}.\nonumber
 \end{align}
Hence, plugging \eqref{Eq.A<B}, \eqref{uniqueness3.1} and \eqref{uniqueness3.2} into \eqref{uniquness3.ener} gives
\begin{equation}\begin{aligned}\label{3.delta u}
    B^2(T)&\leq 2B^2(\delta)+C\int_{\delta}^T\|\sqrt\rho\delta\!u(t)\|_{L^2}^2\|\nabla \bar u(t)\|_{L^2}^4\,\dt+C\int_{\delta}^T A^2(t)t^{3/2}\|\nabla\dot{\bar{u}}(t)\|^2_{L^2}\,\dt\\
    &\leq 2B^2(\delta)+ C\int_{\delta}^T B^2(t)\bigl(\|\nabla \bar u(t)\|_{L^2}^4+t^{3/2}\|\nabla\dot{\bar{u}}(t)\|^2_{L^2}\bigr)\,\dt\\
    &\leq 2B^2(\delta)+ C\int_{\delta}^T B^2(t)\g(t)\,\dt,
\end{aligned}\end{equation}
where $0\leq\g(t){\eqdef}\|\nabla \bar u(t)\|_{L^2}^4+t^{3/2}\|\nabla\dot{\bar u}(t)\|_{L^2}^2$ and
\begin{align*}
\int_{\delta}^T\|\nabla \bar u(t)\|_{L^2}^4\,\dt\leq C\varepsilon^4\ln({T}/{\delta}) \andf \int_{\delta}^Tt^{3/2}\|\nabla\dot{\bar u}(t)\|_{L^2}^2\,\dt\leq C\varepsilon^2 \bigl(\ln({T}/{\delta})+1\bigr)
\end{align*}
for any $0<\delta<T$.
Applying Gr\"onwall's lemma and by \eqref{3.delta u}, \eqref{B_leq_t1/4} we finally get
\begin{equation*}
    B^2(T)\leq 2B^2(\delta)\exp \bigl(C\int_{\delta}^T\g(t)\,\dt\bigr)
    \leq C\delta^{1/2}(T/\delta)^{C\varepsilon^2}\to 0 \quad \text{~as~} \,\delta\to 0,
\end{equation*}
which implies that $B(t)\equiv0$ on $[0,T]$, then by \eqref{Eq.A<B} we know that $A(t)\equiv0$ on $[0,T]$.

This completes the proof of Proposition \ref{uni.prop}.
\end{proof}

We claim that the above proposition implies the uniqueness part of Theorem \ref{thm.self similar}.
Firstly, by \eqref{Eq.rho_W-13} we know that
$$\|\rho(t)-\rho_0\|_{\dot{W}^{-1,(3,\infty)}}+\|\bar\rho(t)-\rho_0\|_{\dot{W}^{-1,(3,\infty)}}\leq Ct\|u_0\|_{\BB},$$
thus $t^{-\f34}\delta\!\rho$ belongs to $L^{\infty}(0,T;\dot{W}^{-1,(3,\infty)})$.
Next we show that $\delta\!u$ not only lies in the energy space, but also exhibits a $t^{1/4}$ bound. Note that
\begin{equation}\begin{aligned}\label{Eq.u-u_0_L^2}
\int_{\R^3} \rho(t,x)|u(t,x)-\bar u(t,x)|^2\,\dx=&\int_{\R^3}\rho_0(x)|u(t,x)-\bar u(t,x)|^2\,\dx\\
&+\int_{\R^3}(\rho(t,x)-\rho_0(x)) |u(t,x)-\bar u(t,x)|^2\,\dx.
\end{aligned}\end{equation}
By \eqref{Eq.rho_0ut_est}, we have
\begin{equation}\begin{aligned}\label{Eq.u-u_0_rho_0}
&\|\sqrt{\rho_0}(u(t)-u_0)\|_{L^2(\R^3)}\leq C\int_0^t \|\sqrt{\rho_0}u_t(s)\|_{L^2}\,\ds=C\sum_{n=0}^{\infty}\int_{2^{-(n+1)}t}^{2^{-n}t} \|\sqrt{\rho_0}u_t(s)\|_{L^2}\,\ds\\
&\leq C\sum_{n=0}^{\infty}(2^{-n}t-2^{-(n+1)}t)^{1/2}\Bigl(\int_{2^{-(n+1)}t}^{2^{-n}t} \|\sqrt{\rho_0}u_t(s)\|^2_{L^2}\,\ds\Bigr)^{1/2}\\
&\leq Ct^{\f14}\|u_0\|_{\BB}\sum_{n=0}^{\infty}2^{-\f{n+1}{4}}\leq Ct^{\f14}\|u_0\|_{\BB},
\end{aligned}\end{equation}
and the same inequality holds for $\bar u$. On the other hand, by duality, Sobolev embedding, \eqref{Eq.rho_W-13}, \eqref{Eq.low_order_est_rhou} and \eqref{estimate3.rho u L3}, we have
\begin{align}
&\Bigl|\int_{\R^3}(\rho(t)-\rho_0) |u(t)-\bar u(t)|^2\,\dx\Bigr| \leq \|\rho(t)-\rho_0\|_{\dot{W}^{-1,(3,\infty)}}\|\nabla (|u(t)-\bar u(t)|^2)\|_{L^{3/2,1}}\nonumber\\
&\leq 2\|\rho(t)-\rho_0\|_{\dot{W}^{-1,(3,\infty)}}\|\nabla u(t)-\nabla\bar u(t)\|_{L^{2}}\|u(t)-\bar u(t)\|_{L^{6,2}} \nonumber\\
&\leq Ct\|\nabla u(t)-\nabla \bar u(t)\|^2_{L^{2}}\|u_0\|_{\BB}\nonumber\\
&\leq Ct^{1/2} \|t^{1/4}(\nabla u,\nabla\bar u)\|^2_{L^{\infty}(\R^+;L^{2})}\|u_0\|_{\BB}
\leq Ct^{1/2}\|u_0\|^3_{\BB}.\label{Eq.u-u_0_duality}
\end{align}
And by \eqref{estimate3.rho u L3}, we also have
\begin{align}
\int_0^t\|(\nabla u,\nabla\bar u)(s)\|^2_{L^2}\,\ds\leq C\|u_0\|^{2}_{\BB}\int_0^t s^{-1/2}\,\ds\leq Ct^{\f12}\|u_0\|^{2}_{\BB},\label{estimate3.nabla delta u decay}\\
\int_s^t\|\nabla\bar u(\tau)\|^4_{L^2}\,\mathrm{d}\tau\leq C\|u_0\|^{4}_{\BB}\int_s^t \tau^{-1}\,\mathrm{d}\tau\leq C\ln(t/s)\|u_0\|^{4}_{\BB}.\nonumber
\end{align}
Plugging \eqref{Eq.u-u_0_rho_0} and \eqref{Eq.u-u_0_duality} into \eqref{Eq.u-u_0_L^2} and \eqref{estimate3.nabla delta u decay} gives
\begin{align*}%\label{estimate3.delta u t1/4}
\|\sqrt\rho(u-\bar u)\|^2_{L^{\infty}(0,t;L^2)}+ \|\nabla u-\nabla\bar{u}\|^2_{L^2(0,t;L^2)}\leq Ct^{1/2}\|u_0\|^2_{\BB}.
\end{align*}
For $0<s<t\leq T$, let $N=[\log_2(t/s)]+1$, by %Lemma \ref{lem.rho u L3} and
 Lemma \ref{lem.nabla u L_infty}, we have
\begin{align*}%\label{estimate3.delta u t1/4}
\int_s^t\|\nabla\bar u(\tau)\|_{L^{\infty}}\,\mathrm{d}\tau&\leq \sum_{n=0}^{N}\int_{2^{-(n+1)}t}^{2^{-n}t} \|\nabla\bar u(\tau)\|_{L^{\infty}}\,\mathrm{d}\tau\leq \sum_{n=0}^{N}C\|u_0\|_{\BB}\\&=C(N+1)\|u_0\|_{\BB}\leq C(\ln(t/s)+1)\|u_0\|_{\BB}.
\end{align*}
Similarly, $\|\tau^{3/4}\nabla \dot{\bar u}(\tau)\|_{L^2(s,t;L^2)}\leq C(N+1)^{1/2}\|u_0\|_{\BB}\leq C\|u_0\|_{\BB} \bigl([\ln(t/s)]^{1/2}+1\bigr).$ Thus, all the assumptions of Proposition \ref{uni.prop} are satisfied by the solutions obtained in Theorem \ref{thm.self similar}, and then we complete the proof of Theorem \ref{thm.self similar}.

\appendix

\section{Littlewood-Paley and Lorentz spaces}\label{Appen_proof}
In this section, we shall collect  the functional spaces used in this paper and some related lemmas. Let us first recall the following dyadic operators:
\begin{equation}\begin{split}\label{defparaproduct}
&\dot{\Delta}_ju\eqdef \mathcal{F}^{-1}\bigl(\varphi(2^{-j}|\xi|)\widehat{u}\bigr),
 \quad\
\dot{S}_ju\eqdef\mathcal{F}^{-1}\bigl(\chi(2^{-j}|\xi|)\widehat{u}\bigr),
\end{split}\end{equation}
where $\xi=(\xi_1,\xi_2,\xi_3)$, $\mathcal{F} u$ and
$\widehat{u}$ denote the Fourier transform of $u$,
while $\mathcal{F}^{-1} u$ denotes its inverse,
$\chi(\tau)$ and $\varphi(\tau)$ are smooth functions such that
\begin{align*}
&\Supp \varphi \subset \Bigl\{\tau \in \R\,: \, \frac34 \leq
|\tau| \leq \frac83 \Bigr\}\quad\mbox{and}\quad \forall
 \tau>0\,,\ \sum_{j\in\Z}\varphi(2^{-j}\tau)=1,\\
&\Supp \chi \subset \Bigl\{\tau \in \R\,: \, |\tau| \leq
\frac43 \Bigr\}\quad\mbox{and}\quad \forall
 \tau\in\R\,,\ \chi(\tau)+ \sum_{j\geq 0}\varphi(2^{-j}\tau)=1.
\end{align*}

By using these dyadic operators, we can define the homogeneous Besov spaces as follows.

\begin{defi}\label{defBesov}
{\sl Let $p,r\in[1,\infty]$ and $s\in\R$. The homogeneous Besov space $\dot{B}^s_{p,r}$ consists of those $u\in{\mathcal S}'$ with $\|\dot{S}_ju\|_{L^\infty}\rightarrow0$ as $j\rightarrow-\infty$ such that
$$\|u\|_{\dot{B}^s_{p,r}}\eqdef\big\|\big(2^{js}
\|\dot{\Delta}_j u\|_{L^p}\big)_{j\in\Z}\bigr\|
_{\ell ^{r}(\Z)}<\infty.$$
Moreover, if $s<0$,   one has the equivalent norm
$$\|u\|_{\dot{B}^s_{p,r}}\sim
\|t^{-\f{s}{2}}\|e^{t\D}u\|_{L^p}\bigr\|_{L^r(\R^+,\f{\mathrm{d}t}{t})}.$$
}\end{defi}

\if0
Here we give the equivalent definition of negative indices Besov spaces.

\begin{defi}[Theorem 2.34, \cite{BCD}]\label{def.negative Besov}
Let $s$ be a positive real number and $(p,r)\in [1,\infty]^2$. A constant $C$ exists which satisfies
\begin{align*}
    C^{-1}\|u\|_{\dot{B}^{-2s}_{p,r}}\leq \bigl\|\|t^se^{t\D}u\|_{L^p}\bigr\|_{L^r(\R^+,\f{dt}{t})}\leq C\|u\|_{\dot{B}^{-2s}_{p,r}}.
\end{align*}
\end{defi}
\fi

Next we define the Lorentz space, see \cite{Gra}.
\begin{defi}\label{defLorentz}
{\sl Let $p,r\in[1,\infty]$. The Lorentz space $L^{p,r}(X, \mu)$ on a measure space $(X, \mu)$ consists of all measurable functions $f$ with
\begin{equation*}
\|f\|_{L^{p,r}(X, \mu)}\eqdef
\left\{
     \begin{aligned}
&\Bigl(\int_0^{\infty}\bigl(t^{\f1p}f^{*}(t)\bigr)^{r}
\frac{\dt}{t}\Bigr)^{\f1r}
\quad\text{ if } 1\leq r<\infty,\\
&\sup_{t>0} t^{\f1p}f^{*}(t)
\quad\qquad\qquad\text{ if }  r=\infty
     \end{aligned}
     \right.
\end{equation*}
finite, where
$$f^{*}(t)\eqdef \inf \bigl\{ s\geq0:
d_f(s)\leq t\bigr\}\andf
d_f(s)\eqdef\mu\{x\in X:|f(x)|>s\}.$$
}\end{defi}

\begin{rmk}
If $p<\infty$, the Lorentz norm can be equivalently given by
\begin{equation*}
\|f\|_{L^{p,r}}\eqdef
\left\{
     \begin{aligned}
&p^{\f1r}\Bigl(\int_0^{\infty}
\bigl(d_f^{\f1p}(t)t\bigr)^{r}
\frac{\dt}{t}\Bigr)^{\f1r}
\quad\text{ if } 1\leq r<\infty,\\
&\sup_{t>0} d_f^{\f1p}(t) t
~\qquad\qquad\qquad\text{ if }  r=\infty.
     \end{aligned}
     \right.
\end{equation*}
And we can easily get
\begin{align}\label{Lorentz.df_Lp}
 d_f(s) \leq s^{-p} \|f\|^p_{L^{p,\infty}}\leq s^{-p} \|f\|^p_{L^p}, \quad\forall~ s>0,~ f\in L^p.
\end{align}
\end{rmk}

\begin{defi}\label{def.W-1,3,infty}
We define $\dot{W}^{1,(3/2,1)}$ to be completion of $C_c^\infty(\R^3)$ under the norm $\|\cdot\|_{\dot{W}^{1,(3/2,1)}}$ given by
\[\|f\|_{\dot{W}^{1,(3/2,1)}}{\eqdef}\sum_{|\alpha|=1}\|\p^{\alpha}f\|_{L^{3/2,1}}.\]
And we denote the dual space of $\dot{W}^{1,(3/2,1)}$ by $\dot{W}^{-1,(3,\infty)}(\R^3)$. More precisely, $\dot{W}^{-1,(3,\infty)}(\R^3)$ consists of all distributions $T\in\mathcal D'(\R^3)$ such that there exists a constant $C>0$ satisfying
\[|\langle T,f\rangle|\leq C\|f\|_{\dot{W}^{1,(3/2,1)}},\quad\forall\ f\in C_c^\infty(\R^3),\]
with the norm
\[\|T\|_{\dot{W}^{-1,(3,\infty)}(\R^3)}{\eqdef}\sup \{|\langle T,f\rangle|: f\in C_c^\infty(\R^3)\ \text{with}\ \|f\|_{ \dot{W}^{1,(3/2,1)}}\leq1\}.\]
\end{defi}

The following maximal regularity will play an essential role in this paper.

\begin{lem}[Proposition A.5 in \cite{DW}]\label{lem.maximal regularity}
{\sl Let $1<p,~q<\infty$ and $1\leq r \leq \infty$. Then for any $u_0\in \dot{B}^{2-\frac{2}{q}}_{p,r}$ with $\dive u_0=0$, and any $f\in L^{q,r}(0,T;L^{p}(\R^d))$, the following {Stokes} system
\begin{equation*}%\label{eq.stokes system}
     \left\{
     \begin{array}{l}
     \partial_tu-\Delta u+\nabla P=f, \quad (t,x)\in \R^{+}\times\R^d\\
     \dive u=0,\\
     u|_{t=0}=u_0,
     \end{array}
     \right.
\end{equation*}
has a unique solution $(u,\nabla P)$ with $\nabla P\in L^{q,r}(0,T;L^p(\R^d))$, and there exists a constant $C$ independent of $T$ such that
\begin{equation*}\begin{split}
\|u\|_{L^{\infty}(0,T;\dot{B}^{2-\f{2}{q}}_{p,r})}
+\|u_t, \nabla^2u,\nabla P\|_{L^{q,r}(0,T;L^p)}
\leq C \big(\|u_0\|_{\dot{B}^{2-\f{2}{q}}_{p,r}}
+\|f\|_{L^{q,r}(0,T;L^p)}\big).
\end{split}\end{equation*}
}\end{lem}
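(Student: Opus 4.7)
The plan is to reduce to the heat equation and then combine classical $L^q(L^p)$ maximal regularity with real interpolation to upgrade to Lorentz time spaces.

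First I would eliminate the pressure. Since $\dive u_0 = 0$, applying the Leray projector $\PP$ to the Stokes system yields the Cauchy problem
\[\partial_t u - \Delta u = \PP f, \qquad u|_{t=0} = u_0,\]
(using $\PP u = u$ and $\PP\nabla P = 0$), while applying $(I - \PP)$ to the same system, together with $\dive u = 0$, gives the explicit formula $\nabla P = (I - \PP) f$. Since $I - \PP = -\nabla(-\Delta)^{-1}\dive$ is a Calder\'on-Zygmund operator, it is bounded on $L^p$ for $1 < p < \infty$, hence also on $L^{q,r}(0,T; L^p)$, yielding the pressure estimate $\|\nabla P\|_{L^{q,r}(0,T; L^p)} \lesssim \|f\|_{L^{q,r}(0,T; L^p)}$ for free.

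Next, for the velocity I would use the Duhamel representation $u(t) = e^{t\Delta} u_0 + \int_0^t e^{(t-s)\Delta} \PP f(s)\,\ds$ and treat each piece separately. The inhomogeneous term is handled by the classical heat-equation maximal regularity theorem (de Simon, Giga-Sohr): the map $g \mapsto (\partial_t, \nabla^2) \int_0^t e^{(t-s)\Delta} g\,\ds$ is bounded on $L^{q_0}(0,T; L^p)$ for every $1 < q_0 < \infty$, with constant independent of $T$. Picking $q_0 < q < q_1$ in $(1,\infty)$ and invoking the Lions-Peetre identity
\[\bigl(L^{q_0}(0,T; L^p),\; L^{q_1}(0,T; L^p)\bigr)_{\theta, r} = L^{q,r}(0,T; L^p),\quad \tfrac{1}{q} = \tfrac{1-\theta}{q_0} + \tfrac{\theta}{q_1},\]
promotes this to the desired Lorentz estimate on $\partial_t u$ and $\nabla^2 u$ by real interpolation.

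For the initial-data piece $U(t) = e^{t\Delta} u_0$, I would use the heat-semigroup characterization of Besov spaces. Since $s := 2 - 2/q > 0$, one derivative in time suffices:
\[\|u_0\|_{\dot B^{s}_{p,r}} \sim \bigl\|t^{1/q}\|\Delta e^{t\Delta} u_0\|_{L^p}\bigr\|_{L^r(\R^+, \dt/t)}.\]
Because the $L^p$ contraction property of the heat semigroup forces $t\mapsto\|\Delta e^{t\Delta}u_0\|_{L^p}$ to be nonincreasing, its decreasing rearrangement is itself, so the right-hand side equals (up to constants) $\|\|\Delta e^{t\Delta} u_0\|_{L^p}\|_{L^{q,r}(\R^+)}$, giving exactly the desired bound on $\partial_t U = \Delta U$; boundedness of the Riesz transforms transfers this to $\nabla^2 U$. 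Adding the two contributions completes the estimate of $\partial_t u$ and $\nabla^2 u$. The $L^\infty_t \dot B^{2-2/q}_{p,r}$ bound on $u$ then follows from the trace embedding for parabolic maximal-regularity spaces, and uniqueness is immediate since the homogeneous problem ($u_0 = 0$, $f = 0$) forces $\nabla P = 0$ and then $u \equiv 0$ by the heat equation.

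The principal technical point will be the real-interpolation step at the endpoint $r = \infty$: one needs the maximal regularity operator to be strong-type on $L^{q,\infty}(0,T; L^p)$, which requires interpolating between two honest $L^{q_i}(L^p)$ estimates rather than between a strong-type and a weak-type endpoint. A secondary subtlety is matching the Besov third index $r$ to the Lorentz index $r$ in time via the full heat-semigroup characterization above, which is valid for every $r \in [1,\infty]$ but requires the time-derivative version since $s > 0$; the simpler negative-index formula recalled in Definition \ref{defBesov} does not apply here.
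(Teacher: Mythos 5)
The paper does not prove this lemma; it is imported verbatim as Proposition~A.5 of \cite{DW}, so there is no in-paper argument to compare against. Judged on its own, your proof is correct and follows the natural route one would expect the cited reference to take.

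Your decomposition is the right one: $\PP$ reduces the Stokes problem to the free heat equation, $\Q=I-\PP$ gives $\nabla P=(I-\PP)f$ explicitly, and the Calder\'on--Zygmund bound on $\Q$ already handles the pressure term. Splitting the velocity into the Duhamel integral and $e^{t\Delta}u_0$ and using de Simon/Giga--Sohr maximal regularity for each fixed exponent $q_0$, then invoking the Lions--Peetre real-interpolation identity $(L^{q_0}(L^p),L^{q_1}(L^p))_{\theta,r}=L^{q,r}(L^p)$ with a $T$-independent constant, gives the source-term estimate in $L^{q,r}(0,T;L^p)$. For the free evolution, the observation that $t\mapsto\|\Delta e^{t\Delta}u_0\|_{L^p}$ is nonincreasing (by $L^p$-contractivity of $e^{\tau\Delta}$), hence equal to its own decreasing rearrangement, is a genuinely slick way to pass from the one-time-derivative heat characterization of $\dot B^{2-2/q}_{p,r}$ (valid since $0<2-2/q<2$) to the Lorentz norm in time; it avoids any separate duality or weak-type argument. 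The transfer to $\nabla^2 u$ by Riesz transforms, the trace bound in $L^\infty_t\dot B^{2-2/q}_{p,r}$ (which one can alternatively obtain directly from the Duhamel formula and the semigroup estimates for each fixed $t$), and the uniqueness argument are all standard and correct. The only point worth making fully explicit in a written-up version is that the maximal-regularity operator can be defined once and for all on $(0,\infty)$ (extend $f$ by zero), so the interpolation indeed yields a $T$-independent constant; you implicitly use this but do not say it.
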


\section* {Acknowledgments}

D. Wei is partially supported by the National Key R\&D Program of China under the grant 2021YFA1001500.  P. Zhang is partially  supported by National Key R$\&$D Program of China under grant 2021YFA1000800 and by National Natural Science Foundation of China under Grant 12421001 and 12288201.
 Z. Zhang is partially supported by NSF of China under Grant 12288101.

\end{document}